\def\bN {\mathbb{N}}
\def\bR {\mathbb{R}}
\def\e{\mathrm{e}}
\def\cA {\mathcal{A}}
\def\cF {\mathcal{F}}
\def\cL {\mathcal{L}}
\def\cS {\mathcal{S}}
\def\cX {\mathcal{X}}
\def\d {{\mathrm d}}
\font\dsrom=dsrom10 scaled 1200
\def \indic{\textrm{\dsrom{1}}}
\newcommand{\Sign}{\operatorname{sgn}}
\newcommand{\Supp}{\operatorname{supp}}
\newcommand{\dis}{\displaystyle}
\newcommand{\ba}{\begin{aligned}}
\newcommand{\ea}{\end{aligned}}
\newcommand{\be}{\begin{equation}}
\newcommand{\ee}{\end{equation}}
\newtheorem{theorem}{Theorem}[section]
\newtheorem{corollary}[theorem]{Corollary}
\newtheorem{lemma}[theorem]{Lemma}
\newtheorem{proposition}[theorem]{Proposition}
\theoremstyle{definition}
\newtheorem{definition}[theorem]{Definition}
\newtheorem{remark}[theorem]{Remark}
\newcommand{\ep}{\varepsilon}
\begin{document}

\title[Growth-fragmentation with bounded fragmentation rate]{Asymptotic behavior of the growth-fragmentation equation\\
with bounded fragmentation rate}

\author[\'E. Bernard]{\'{E}tienne Bernard}
\address[\'E. Bernard]{Laboratoire de G\'eod\'esie, IGN-LAREG, B\^atiment Lamarck A et B, 35 rue H\'el\`ene Brion, 75013 Paris, France.}
\email{etienne.bernard@ign.fr}
\author[P. Gabriel]{Pierre Gabriel}
\address[P. Gabriel]{Laboratoire de Math\'ematiques de Versailles, UVSQ, CNRS, Universit\'e Paris-Saclay,  45 Avenue des \'Etats-Unis, 78035 Versailles cedex, France.}
\email[Corresponding author]{pierre.gabriel@uvsq.fr}


\begin{abstract}
We are interested in the large time behavior of the solutions to the growth-fragmentation equation.
We work in the space of integrable functions weighted with the principal dual eigenfunction of the growth-fragmentation operator.
This space is the largest one in which we can expect convergence to the steady size distribution.
Although this convergence is known to occur under fairly general conditions on the coefficients of the equation, we prove that it does not happen uniformly with respect to the initial data when the fragmentation rate in bounded.
First we get the result for fragmentation kernels which do not form arbitrarily small fragments by taking advantage of the Dyson-Phillips series.
Then we extend it to general kernels by using the notion of quasi-compactness and the fact that it is a topological invariant.
\end{abstract}

\keywords{Growth-fragmentation equation, long-time behavior, nonuniform convergence, positive semigroups, Dyson-Phillips series, lack of quasi-compactness.}

\subjclass[2010]{35F16 (primary), and 35B40, 35C10, 45K05, 47D06 (secondary)}

\maketitle

\section*{Introduction}

In this article, we study the asymptotic behavior of the {\it growth-fragmentation equation} 
\begin{equation}
\label{eq:GF}
\left\{
    \begin{array}{ll}
    \partial_{t}f(t,x)+\partial_x\left(\tau(x) f(t,x)\right) = \cF f(t,x),\quad & t,x>0,
    \vspace{2mm}\\
    (\tau f)(t,0)=0,&t>0,
    \vspace{2mm}\\
    f(0,x) = f^{\rm{in}}(x),&x\geq0.
    \end{array}
\right.
\end{equation}
This equation appears in the modeling of various physical or biological phenomena~\cite{MD86,BP,Banasiak,Gabriel15}
as well as in telecommunication~\cite{BCGMZ}.
The unknown $f(t,x)$ represents the concentration at time $t$ of some ``particles'' with ``size'' $x>0,$
which can be for instance the volume of a cell, the length of a fibrillar polymer, or the window size in data transmission over the Internet.
Each particle grows with a rate $\tau(x)$ and splits according to the fragmentation operator $\cF$ which acts on a function $f(x)$ through
\[\cF f(x):=\cF_+f(x)-B(x)f(x).\]
The positive part $\cF_+$ is an integral operator given by
\be
\label{def:cF+}
\cF_+ f(x) := \int_0^1B\Bigl(\frac xz\Bigr)f\Bigl(\frac xz\Bigr)\frac{\wp(\d z)}{z}.
\ee
When a particle of size $x$ breaks with rate $B(x),$ it produces smaller particles of sizes $zx$ with $0<z<1$ distributed with respect to the fragmentation kernel $\wp.$
The fragmentation kernel $\wp$ is a finite positive measure on the open interval $(0,1)$ which satisfies
\be\label{as:mass_cons}\int_0^1 z\,\wp(\d z)=1.\ee
This is a mass conservation condition since it ensures that if we sum the sizes of the offsprings we recover the size of the mother particle.

Classical examples of fragmentation kernels are the mitosis kernel $\wp=2\delta_{1/2},$ the asymmetrical division kernels $\wp=\delta_\nu+\delta_{1-\nu}$ with $\nu\in(0,1/2),$
and the power law kernels $\wp(\d z)=(\nu+2)z^\nu\d z$ with $\nu>-2.$
Notice that the power law kernels are physically relevant only for $\nu\leq0$ (see {\it e.g.} discussion in Section~8.2.1 of~\cite{Banasiak}),
which includes the uniform kernel $\wp\equiv2.$

\

The long time behavior of the solutions is strongly related to the existence of $(\lambda,G,\phi)$ solution to the following Perron eigenvalue problem:
\be\label{eq:direct_Perron}(\tau G)'+\lambda G=\mathcal F G,\qquad G\geq0,\qquad \int_0^\infty G(x)\,\d x=1\ee
and the dual problem:
\be\label{eq:dual_Perron}-\tau\phi'+\lambda\phi+B\phi=\mathcal F_+^*\phi,\qquad\phi\geq0,\qquad \int_0^\infty G(x)\phi(x)\,\d x=1\ee
where
\[\cF_+^*\varphi(x):=B(x)\int_0^1\varphi(zx)\wp(\d z).\]
When $(\lambda,G,\phi)$ exists and for initial distributions which satisfy
\begin{equation}
\label{def:crochets}
\langle f^{\rm in},\phi\rangle:=\int_0^\infty f^{\rm in}(y)\phi(y)\,\d y<+\infty,
\end{equation}
the solutions to Equation~\eqref{eq:GF} are expected to behave like
\[f(t,x)\sim\langle f^{\rm in},\phi\rangle\,G(x){\rm e}^{\lambda t}\qquad\text{when}\ t\to+\infty.\]
This property is sometimes called {\it asynchronous exponential growth} since it ensures that the shape of the initial distribution is forgotten for large times.
Asymptotically the population grows exponentially fast with a {\it Malthus parameter} $\lambda$ and is aligned to the {\it stable size distribution} $G.$

\

The question of the convergence to the stable size distribution for growth-fragmentation dates back to~\cite{DHT}.
In this pioneer paper, Diekmann, Heijmans and Thieme consider the case when $x$ lies in a bounded domain,
which is made possible by imposing that the fragmentation rate $B$ blows up in a non-integrable way at a finite maximal size,
in order to prevent the particles to reach the maximal size.
In this case and for the mitosis kernel, they prove that the {\it rescaled} solution $f(t,x)\e^{-\lambda t}$ converges to the stable size distribution (times a constant) for a weighted $L^\infty$-norm.
They use semigroup techniques and the spectral results obtained in~\cite{H85} for the mitosis kernel (and extended to smooth self-similar kernels in~\cite{H84}).
The result in~\cite{DHT} has also been obtained in a $L^1$ setting: in~\cite{GreinerNagel} for the mitosis kernel, with an exponential rate of convergence,
and in~\cite{RP,BPR} for general kernels.

\

When the domain is $\bR_+$, it is necessary to control what goes to infinity.
If we want to have convergence to the stable size distribution, an obvious condition is that $\langle f^{\rm in},\phi\rangle$ is finite.
In particular we cannot expect a convergence in $L^1(\bR_+)$ when $\phi$ is not bounded.
The largest space in which we can work is then
\[L^1_\phi:=L^1(\bR_+,\phi(x)\,\d x).\]
The rate of convergence of the solutions to Equation~\eqref{eq:GF} in this critical functional space is the purpose of the present paper.
It is known from~\cite{MMP2} that for any initial data in $L^1_\phi$ the solution converges to the stable size distribution in the $L^1_\phi$-norm.
A further question concerns the existence of an exponential rate of convergence in $L^1_\phi.$
It was first addressed in~\cite{PR05} for the mitosis kernel and a (almost) constant fragmentation rate $B.$
It is proved that the exponential convergence occurs provided that the initial data is bounded for a stronger norm than the $L^1_\phi$-norm.
There is an additional term which corresponds to the Wasserstein distance between the initial data and the equilibrium (see~\cite{BCGMZ} for the statement with this distance).
In~\cite{LP09} where the result is extended to more general kernels (see also~\cite{PPS} for another generalization), the necessity of this term is questioned.
Can we control the decay rate in the $L^1_\phi$-norm by the same norm on the initial data?
We know that it is possible in smaller spaces.
It has been proved in Hilbert spaces by using  functional inequalities~\cite{CCM11,GS14,Monmarche}.
For weighted $L^1$ spaces we also have positive results~\cite{CCM10,MS} but for weights which are stronger than $\phi.$
The question we address in this paper is then to know whether these weights can be replaced by $\phi.$
We prove that it is not possible when the fragmentation rate $B$ is bounded,
and this ensures a kind of optimality for the previous results.

\

%
%

\section{Statement of the main result}

We start by saying a few words about the moments of $\wp$ which will play a crucial role in the study of the problem.
For any $r\in\bR$ the (possibly infinite) $r$-th moment of $\wp$ is denoted by
\[\wp_r:=\int_0^1z^r\,\wp(\d z).\]
The zero-moment $\wp_0$ represents the mean number of fragments produced by the fragmentation of one particle.
The fact that the measure $\wp$ is finite means that $\wp_0<+\infty.$
Of course $\wp_r$ can become infinite for negative $r.$
Define
\[\underline r:=\inf\{r\in\bR,\ \wp_r<+\infty\}\in[-\infty,0].\]
Clearly $\wp_r<+\infty$ for any $r>\underline r.$
Additionally since $\wp$ is a positive measure on the open interval $(0,1),$ the function $r\mapsto\wp_r$  is strictly decreasing on $(\underline r,+\infty).$
The mass conservation requires $\wp_1=1$ and because $\wp_0>\wp_1$ we deduce that the mean number of fragments is larger than one.

\

We are now ready to state the

\smallskip

\noindent{\bf Hypotheses on the coefficients:}

\medskip

\begin{itemize}

\item[\bf (H$\tau$)] The growth rate $\tau$ is a positive $C^1$-function on $\bR_+^*$ which satisfies
\begin{equation}\label{as:tau0}
\frac1\tau\in L^1_{loc}(\bR_+),
\end{equation}
\begin{equation}\label{as:tau_infty}
\exists\, \underline\alpha\leq\alpha<1,\quad \tau(x)=O(x^{\alpha})\quad\text{and}\quad x^{\underline\alpha}=O\big(\tau(x)\big)\quad\text{when}\ x\to+\infty.
\end{equation}

\medskip

\item[\bf (H$B$)]
The total fragmentation rate $B$ is a nonnegative essentially bounded function on $\bR_+,$ with a connected support, and such that
\begin{equation}\label{as:B_infty}
\exists A_0,B_\infty>0,\quad \forall x\geq A_0,\quad B(x)=B_\infty.
\end{equation}

\medskip

\item[\bf (H$\wp$)]
The fragmentation kernel $\wp$ is a finite positive Borel measure on the open interval $(0,1)$ which satisfies the mass conservation condition~\eqref{as:mass_cons}
and is such that
\begin{equation}\label{as:wpr_lim}
\lim_{r\to\underline r^+}\wp_r=+\infty.
\end{equation}

\end{itemize}

The monotone convergence theorem together with condition~\eqref{as:wpr_lim} ensure that $r\mapsto\wp_r$ is a continuous function on $(\underline r,+\infty)$ and that its image is $(0,+\infty).$
This property will be fundamental in our study of the dual eigenfunction $\phi.$
In order to illustrate which pathological kernels we want to avoid with condition~\eqref{as:wpr_lim}, we give some examples.

\

\noindent{\bf Examples and counter-example}
\begin{enumerate}[i)]
\item Consider the measure with Lebesgue density defined by
\[\wp(z)=\Bigl(\int_0^1|\log z|^{-2}\,\d z\Bigr)^{-1}z^{-1}|\log z|^{-2}.\]
It satisfies the mass conservation condition~\eqref{as:mass_cons} but not Assumption~\eqref{as:wpr_lim}.
In this case $\underline r=0$ and $\wp_0<+\infty.$
The image of $r\mapsto\wp_r$ is $(0,\wp_0].$
\item Consider a kernel which is absolutely continuous with respect to the Lebesgue measure close to the origin and has fast decrease when $z\to0,$ {\it i.e.} such that
\[\forall \mu\in\bR,\quad\wp(z)=o(z^\mu)\quad \text{when}\ z\to0.\]
In this case Assumption~\eqref{as:wpr_lim} is satisfied since $\underline r=-\infty$ and for any $r\leq0$ we have
\[\wp_r=\int_0^1z^r\wp(\d z)\geq -r\int_0^1(1-z)\wp(\d z)=-r(\wp_0-1)\xrightarrow[r\to-\infty]{}+\infty.\]
A concrete example is given by the mitosis kernel $\wp=2\delta_{1/2}$ or the asymmetrical division $\wp=\delta_\nu+\delta_{1-\nu}$ with $0<\nu<1/2.$
\item Consider a kernel which is absolutely continuous with respect to the Lebesgue measure close to the origin and such that
\[\exists p_0,r_0>0,\quad\wp(z)\sim p_0z^{r_0-1}\quad \text{when}\ z\to0.\]
Then $\wp$ satisfies Assumption~\eqref{as:wpr_lim} with $\underline r=-r_0.$
For instance the power law kernels $\wp(z)=(\nu+2)z^\nu$ belong to this class provided that $\nu>-1.$
\end{enumerate}

\

\noindent{\bf Existence of Perron eigenelements}

There are many existence and uniqueness results for the Perron eigenvalue problem set on $\bR_+.$
We can mention~\cite{HW89,SVBW} in which the direct problem~\eqref{eq:direct_Perron} is solved in the case of equal or asymmetrical mitosis with constant coefficients $\tau$ and $B.$
The full problem~\eqref{eq:direct_Perron}-\eqref{eq:dual_Perron} is treated in~\cite{PR05,M06,DG10} for more and more general coefficients. 
We will use the following result, which is a particular case of~\cite[Theorem 1]{DG10}:

\begin{theorem}\label{th:recall}
Assume that {\bf(H$\tau$-H$B$-H$\wp$)} are satisfied.
There exist a unique solution (in the distributional sense) $(\lambda,G)\in\bR\times L^1(\bR_+)$ to the Perron eigenvalue problem~\eqref{eq:direct_Perron}
and a unique dual eigenfunction $\phi\in W^{1,\infty}_{loc}(\bR_+)$ such that $(\lambda,\phi)$ satisfies~\eqref{eq:dual_Perron} (in the sense of a.e. equality).
Moreover $\lambda>0.$
\end{theorem}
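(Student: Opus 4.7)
\smallskip

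\noindent\textbf{Proof proposal.} Since the theorem is explicitly presented as a particular case of \cite[Theorem 1]{DG10}, the most economical plan is to verify that (H$\tau$), (H$B$), and (H$\wp$) fit into the hypotheses of that reference and then quote it: (H$\tau$) encodes the required behavior of the growth rate at both endpoints; (H$B$) provides irreducibility through the connected support of $B$ and the asymptotic simplification $B\equiv B_\infty$ for $x\ge A_0$; and (H$\wp$) yields that $r\mapsto\wp_r$ is continuous on $(\underline r,+\infty)$ with image exactly $(0,+\infty)$, which is the crucial ingredient for fixing the asymptotic behavior of the dual eigenfunction.

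For a self-contained argument I would follow a regularize-and-limit scheme. First, truncate the problem to a bounded interval $[\eta,R]$ with a Dirichlet-type condition at $\eta$ and an absorbing condition at $R$; the resulting operator on $L^1([\eta,R])$ has a compact, strongly positive and irreducible resolvent, so the Krein--Rutman theorem produces a simple dominant eigenvalue $\lambda_{\eta,R}$ together with strictly positive primal and dual eigenfunctions $G_{\eta,R}$ and $\phi_{\eta,R}$. Second, derive a priori bounds uniform in $(\eta,R)$ by testing the direct equation against moments $x^k$ (exploiting $\tau(x)=O(x^\alpha)$ with $\alpha<1$ and the cancellations produced by $\wp_1=1$), and by seeking a dual profile $\phi(x)\sim x^r$ at $+\infty$: injecting this ansatz into~\eqref{eq:dual_Perron} produces an implicit relation between $r$ and $\lambda$ whose solvability is guaranteed precisely by (H$\wp$). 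Third, pass to the limit by weak compactness and extract the triple $(\lambda,G,\phi)$.

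Uniqueness then follows from the simplicity of the Perron eigenvalue. The positivity $\lambda>0$ can be obtained directly: multiplying~\eqref{eq:direct_Perron} by $x$ and integrating over $\bR_+$, the change of variables $y=x/z$ in $\cF_+$ combined with $\wp_1=1$ gives $\int_0^\infty x\,\cF G(x)\,\d x=0$, and integration by parts (boundary terms vanishing thanks to $(\tau G)(0)=0$ and the moment control already derived) yields
\[
\lambda\int_0^\infty xG(x)\,\d x=\int_0^\infty\tau(x)G(x)\,\d x>0.
\]
The main obstacle in this program is the control of $\phi$ near $+\infty$: because $\phi$ typically grows polynomially, one must locate the correct exponent before one can choose a weighted space in which uniform bounds hold. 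This is precisely where (H$\wp$), via the surjectivity of $r\mapsto\wp_r$, plays its decisive role.
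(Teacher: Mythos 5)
The paper itself offers no proof of this theorem: it explicitly defers to \cite[Theorem 1]{DG10}, and your opening paragraph correctly identifies this and checks that the hypotheses (H$\tau$–H$B$–H$\wp$) indeed place the problem within the scope of that result, which is all the paper does. Your supplementary sketch (truncation to a bounded interval, Krein–Rutman on the compactified problem, a priori weighted-moment bounds exploiting $\alpha<1$ and $\wp_1=1$, passage to the limit, and the clean derivation of $\lambda\int xG=\int\tau G>0$ by testing the direct equation against $x$) is a faithful and correct summary of the strategy actually carried out in~\cite{DG10}, so no gap to report.
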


In section~\ref{sec:Perroneigenelements} we prove fine estimates on the profile $\phi(x),$ which are needed for the proof of our main theorem.
We do not need estimates on the profile $G(x),$
but the interested reader can find some in~\cite{DG10,BCG}.

\

\noindent{\bf The main result}

Define the {\it rescaled growth-fragmentation} operator
\[\cA g:=-(\tau g)'-\lambda g+\cF g\]
with domain
\[D(\cA)=\bigl\{g\in L^1_\phi\,|\, (\tau g)'\in L^1_\phi,\, (\tau g)(0)=0\bigr\}\]
and consider the abstract Cauchy problem
\begin{equation}\label{eq:abstract_resc}\left\{\begin{array}{l}
\dfrac{\d}{\d t}\,g=\cA g
\vspace{3mm}\\
g(0)=f^{\rm in}.
\end{array}\right.\end{equation}
We will prove in Section~\ref{sec:semigroups} that the operator $\cA$ generates a strongly continuous semigroup (also called $C_0$-semigroup) $(T_t)_{t\geq0}$ on $L^1_\phi.$
This result ensures that there exists a unique (mild) solution to the abstract Cauchy problem~\eqref{eq:abstract_resc} given by $g(t)=T_tf^{\rm in}.$
As a direct consequence for any $f^{\rm in}\in L^1_\phi,$ Equation~\eqref{eq:GF} admits a unique solution given by
\[f(t,\cdot)=\e^{\lambda t}g(t)=\e^{\lambda t}T_tf^{\rm in}.\]
Clearly the Perron eigenfunction $G$ is a steady-state for~\eqref{eq:abstract_resc} since by definition $\cA G=0.$
In other words $G$ is a fixed point for $(T_t)_{t\geq0},$ {\it i.e.} $T_tG=G$ for all time $t.$
The dual eigenfunction provides a conservation law for $(T_t)_{t\geq0}$
\begin{equation}\label{eq:cons_law}
\forall g\in L^1_\phi,\ \forall t\geq0,\qquad\langle T_tg,\phi\rangle=\langle g,\phi\rangle.
\end{equation}
This motivates the definition of the projection
\[Pg:=\langle g,\phi\rangle\,G.\]
In~\cite{MMP2}, Perthame {\it et al.} prove by using General Relative Entropy techniques that
\[\lim_{t\to+\infty}\|T_tg-Pg\|_{L^1_\phi}=0\qquad\text{for all}\ g\in L^1_\phi.\]
This is the asynchronous exponential growth property.
In the vocabulary of semigroups, it is called {\it strong convergence} of the semigroup $(T_t)_{t\geq0}$ to the projection $P.$
In terms of spectral theory, it ensures that in $L^1_\phi$ the Perron eigenvalue $\lambda$ is simple ({\it i.e.} has algebraic multiplicity one) and strictly dominant.

A stronger concept of convergence is the {\it uniform convergence}, {\it i.e.} the convergence for the norm of operators.
The uniform convergence of $(T_t)_{t\geq0}$ to $P$ would ensure the existence of a spectral gap.
Indeed it is a standard result (see~\cite[Prop. V.1.7]{EN} for instance) that for $C_{0}$-semigroups the uniform convergence is equivalent to the {\it uniform exponential convergence}.
Our main result is that the uniform convergence does not hold.

\begin{theorem}\label{th:main}
Under Hypotheses {\bf(H$\tau$-H$B$-H$\wp$)}, the semigroup $(T_t)_{t\geq0}$ does not converge uniformly to the projection $P.$
More precisely we have
\[\forall t\geq0,\qquad\| T_t-P\|_{\cL(L^1_\phi)}\geq1.\]
\end{theorem}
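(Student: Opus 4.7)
The plan is to exhibit a sequence of nonnegative test functions $(g_n)$ of unit $L^1_\phi$-norm, concentrated at sizes $x_n\to+\infty$, such that $T_tg_n$ remains essentially supported at large sizes while $Pg_n=G$ is concentrated near the origin; the resulting support separation will force $\|T_tg_n-Pg_n\|_{L^1_\phi}$ to approach $1$.

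Set $g_n:=c_n^{-1}\indc_{[x_n,x_n+1]}$ with $c_n:=\int_{x_n}^{x_n+1}\phi(y)\,dy$. Then $g_n\geq 0$, $\|g_n\|_{L^1_\phi}=\langle g_n,\phi\rangle=1$, and hence $Pg_n=G$. The crucial estimate is: for every fixed $R>0$,
\[
\int_0^R (T_tg_n)(x)\,\phi(x)\,dx\;\xrightarrow[n\to+\infty]{}\;0.
\]
I would establish this via the Dyson--Phillips expansion $T_t=\sum_{k\geq 0}S_k(t)$, where $S_k(t)g_n$ is the contribution of lineages undergoing exactly $k$ fragmentations and is supported near sizes of the form $z_1\cdots z_k\cdot x_n$ (up to the $O(1)$ transport correction). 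To contribute in $[0,R]$ one needs $z_1\cdots z_k\leq CR/x_n$, which is impossible for small $k$ and improbable for large $k$: the bound $B\leq B_\infty$ caps the number of fragmentations by a Poisson law with parameter $B_\infty t$, and the finiteness of $\wp$ near zero controls the mass of very small fragments.

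Granting this estimate, the conservation identity $\|T_tg_n\|_{L^1_\phi}=\langle g_n,\phi\rangle=1$ yields $\int_R^\infty T_tg_n\,\phi\,dx\to 1$. Given $\epsilon>0$, pick $R_\epsilon$ such that $\int_{R_\epsilon}^\infty G\phi\,dx<\epsilon$; then for $n$ large enough,
\[
\|T_tg_n-G\|_{L^1_\phi}\;\geq\;\Bigl|\int_{R_\epsilon}^\infty(T_tg_n-G)\phi\,dx\Bigr|\;\geq\;1-2\epsilon.
\]
Since $\|g_n\|_{L^1_\phi}=1$, this gives $\|T_t-P\|_{\cL(L^1_\phi)}\geq 1-2\epsilon$; letting $\epsilon\to 0$ closes the argument.

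The elementary support-separation bookkeeping in the last step is not the hard part; the main obstacle is the key estimate, which requires controlling the Dyson--Phillips series. This matches the two-step strategy announced in the abstract: when $\wp$ does not form arbitrarily small fragments (i.e.\ is supported in some $[\delta,1)$), a direct Poisson-tail argument on the number of fragmentations controls the series; the case of general kernels is then handled indirectly, by transferring the result through the topological invariance of quasi-compactness.
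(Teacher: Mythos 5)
Your proposal follows the paper's overall architecture — test functions escaping to infinity, Dyson--Phillips expansion plus a support-separation argument when $\wp$ is supported away from $0$, and a quasi-compactness transfer for general $\wp$ — but it leaves the hard part of that second step as an unexamined slogan, and that is precisely where the real work in the paper is.

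For the case $z_0:=\inf\Supp\wp>0$ your sketch is essentially correct and matches the paper's Section~4: with $g_a=\phi^{-1}\indic_{[a,a+1]}$, one proves by induction that $\Supp\bigl(S_t^{(n)}g_a\bigr)\subset[z_0^n a,+\infty)$, so the terms $n<\log_{1/z_0}(a/X)$ contribute nothing on $[0,X]$, and the remaining tail is killed by the bound $\|S_t^{(n)}\|\leq\|\cF_+\|^n t^n/n!$; this is the rigorous version of your ``Poisson tail.'' (Incidentally, by also using the $[0,X]$ piece of the integral, the paper sharpens the conclusion to $\|T_t-P\|_{\cL(L^1_\phi)}=2$, not merely $\geq1$.) Note, however, that your assertion that ``the finiteness of $\wp$ near zero controls the mass of very small fragments'' is exactly what fails to close the argument when $z_0=0$: a single fragmentation can then send mass directly below any threshold $R$, the inductive support lemma breaks down, and the Dyson--Phillips tail bound by itself says nothing about how much of $S_t^{(n)}g_a$ lands in $[0,R]$.

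The genuine gap is in your treatment of the general case. You invoke ``the topological invariance of quasi-compactness'' to transfer the $z_0>0$ conclusion, but you have not confronted the obstruction that makes this transfer nontrivial: truncating the kernel to $\wp^\ep:=\bigl(\int_\ep^1 z\,\wp(\d z)\bigr)^{-1}\indic_{[\ep,1]}\wp$ changes the dual Perron eigenfunction. By Theorem~\ref{th:phi_estimates}, $\phi$ behaves like $x^k$ at infinity with $k$ determined by $\wp_k=1+\lambda/B_\infty$; the truncated problem has a different exponent, so the weighted space $L^1_{\phi_\ep}$ is \emph{not} the same as $L^1_\phi$ and the norms are not equivalent. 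Invariance of quasi-compactness under equivalent renorming is then useless as stated. The paper's fix is the substantive content of Section~5: one must simultaneously perturb $B$ to $B_{\eta,A}=B+\eta\indic_{[A,\infty)}$ and choose $(\eta,A)$ as functions of $\ep$ (Proposition~\ref{prop:eta_Aeta}) so that the new dual eigenfunction $\hat\phi_\ep$ satisfies the \emph{same} relation $\wp_k^\ep=1+\hat\lambda_\ep/(B_\infty+\eta)$ with the \emph{same} exponent $k$, forcing $\hat\phi_\ep\asymp\phi$ and hence $L^1_{\hat\phi_\ep}=L^1_\phi$ with equivalent norms. Only then does non-quasi-compactness of $\hat T^\ep_t$ in $L^1_{\hat\phi_\ep}$ (which follows from the $z_0>0$ case via a Nagel-type theorem saying bounded, positive, quasi-compact semigroups with spectral bound zero converge uniformly to a finite-rank projection) imply non-quasi-compactness in $L^1_\phi$, yielding $\|\hat T^\ep_t-P\|_{\cL(L^1_\phi)}\geq1$; one finishes with a Duhamel/Gr\"onwall estimate showing $\|\hat T^\ep_t-T_t\|_{\cL(L^1_\phi)}\to0$. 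Without this renormalization device, or some replacement for it, your reduction to the truncated kernel does not go through.
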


\

\noindent{\bf Structure of the paper}\\
We start by giving fine estimates on the dual eigenfunction $\phi$ in Section~\ref{sec:Perroneigenelements}.
These estimates are crucial all along the paper since we work in the space $L^1_\phi.$
In Section~\ref{sec:semigroups} we prove that the rescaled growth-fragmentation operator generates a positive contraction semigroup which provides the unique mild solution to our abstract Cauchy problem.
This semigroup is given by a Dyson-Phillips series and this is a central ingredient to prove the main theorem in the case when the fragmentation kernel has a support away from zero.
To make this more precise we define
\[z_0:=\inf\Supp\wp\in[0,1).\]
In Section~\ref{sec:z0>0} we prove the main theorem in the case $z_0>0$ by taking advantage of the Dyson-Phillips series which enables us to build solutions to our Cauchy problem that converge arbitrarily slowly to equilibrium, so that we can compute precisely the operator norm of $T_{t}-P$.
In Section~\ref{sec:z0=0} we extend the result to the case $z_0=0$ by using an accurate truncation of the fragmentation kernel and a passage to the limit which relies on the notion of quasi-compactness of a semigroup, with the crucial remark that it is invariant up to equivalent norm.
Finally in Section~\ref{sec:complementary} we give some results which complete the main theorem.

%
%

\section{The dual eigenfunction $\phi.$}\label{sec:Perroneigenelements}

To prove Theorem~\ref{th:main}, we need fine estimates on the dual eigenfunction $\phi$ given by the following theorem.

\begin{theorem}\label{th:phi_estimates}
Under Hypotheses {\bf(H$\tau$-H$B$-H$\wp$)}, there exists a constant $C>0$ such that
\[\forall x\geq0,\qquad \frac1C(1+x)^k\leq\phi(x)\leq C(1+x)^k,\]
where $k<1$ is uniquely defined by
\begin{equation}\label{eq:defk}
\wp_k=1+\frac{\lambda}{B_\infty}.
\end{equation}
\end{theorem}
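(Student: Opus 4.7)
The plan is to identify the correct power-law exponent via the characteristic equation at infinity, and then to bracket $\phi$ between sub- and super-solutions of the dual operator that behave like $x^k$. Existence and uniqueness of $k$ follow from basic properties of $r\mapsto\wp_r$: it is continuous (by dominated convergence) and strictly decreasing on $(\underline r,+\infty)$, with limit $+\infty$ at $\underline r^+$ by assumption~\eqref{as:wpr_lim} and limit $0$ at $+\infty$ by monotone convergence, so its image is $(0,+\infty)$. Since $\lambda>0$, the target $1+\lambda/B_\infty$ is strictly greater than $\wp_1=1$, and hence the unique $k$ with $\wp_k=1+\lambda/B_\infty$ lies in $(\underline r,1)$.

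The behavior on compact sets is immediate: Theorem~\ref{th:recall} supplies $\phi\in W^{1,\infty}_{loc}(\bR_+)$, and the Krein--Rutman type construction underlying it produces a strictly positive $\phi$. Hence on any $[0,M]$ with $M\geq A_0$ the function $\phi$ is controlled above and below by positive constants, and it suffices to establish $\phi(x)\asymp x^k$ as $x\to+\infty$.

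For this asymptotics I would work on $[A_0,+\infty)$, where $B\equiv B_\infty$, and test power functions against the dual operator
\[\cL\Psi:=-\tau\Psi'+(\lambda+B_\infty)\Psi-\cF_+^*\Psi.\]
A direct computation gives $\cL(x^r)=[\lambda+B_\infty-B_\infty\wp_r]x^r-r\tau(x)x^{r-1}$; for $r=k$ the leading $x^r$-term cancels by~\eqref{eq:defk}, leaving only $-k\tau(x)x^{k-1}=O(x^{k-(1-\alpha)})$ by~\eqref{as:tau_infty}. I would then refine the ansatz into $\Psi_\pm(x)=ax^k\mp bx^{k-\delta}$ with $\delta\in(0,1-\alpha)$ small enough that $\wp_{k-\delta}<+\infty$ (permitted since $k>\underline r$). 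The computation produces
\[\cL\Psi_\pm(x)=\pm bB_\infty(\wp_{k-\delta}-\wp_k)x^{k-\delta}+O(x^{k-(1-\alpha)}),\]
whose leading term has definite sign by strict monotonicity of $\wp_r$ and dominates the derivative correction as soon as $\delta<1-\alpha$. Hence $\Psi_+$ is a super-solution and $\Psi_-$ a sub-solution of $\cL$ on $[X_0,+\infty)$ for some $X_0\geq A_0$.

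To conclude, I would propagate $\Psi_-\leq\phi\leq\Psi_+$ from $[0,X_0]$ to all of $\bR_+$. After fixing $a$ large, strict positivity of $\phi$ and the $\asymp x^k$ behavior of $\Psi_\pm$ guarantee the inequality on the compact piece; then the dual equation recast as a first-order ODE forward in $x$ (since $\cF_+^*\phi(x)$ only involves values of $\phi$ on $[0,x]$) turns $\phi-\Psi_\pm$ into the solution of a linear Volterra-type inequality on $[X_0,+\infty)$, to which a Gronwall-type argument applies to preserve the sign. The main obstacle is precisely this propagation: one must produce a spectral gap $\wp_{k-\delta}-\wp_k>0$ at a sub-leading exponent $k-\delta$ still inside the finite-moment range. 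This is where assumption~\eqref{as:wpr_lim} becomes decisive: it guarantees that $k$ lies strictly in the interior of $(\underline r,+\infty)$, so that such a $\delta>0$ exists; without it the bracketing construction would collapse.
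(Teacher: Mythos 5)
The identification of the exponent $k$, the computation $\cL(x^r)=[\lambda+B_\infty-B_\infty\wp_r]x^r - r\tau(x)x^{r-1}$ on $\{B\equiv B_\infty\}$, and the corrected ansatz $ax^k\mp bx^{k-\delta}$ with $\delta\in(0,1-\alpha)$ small enough that $\wp_{k-\delta}<\infty$ are all essentially the paper's ingredients, and you are right that~\eqref{as:wpr_lim} is exactly what guarantees room below $k$ for a finite moment. The gap is in the propagation step: the ``forward-in-$x$ Volterra/Gronwall'' argument does not preserve sign. Take $w=\Psi_+-\phi$ and suppose $w\geq0$ on $[0,x_1)$ with $w(x_1)=0$. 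Since $\cL w>0$, i.e.\ $\tau w'<(\lambda+B_\infty)w-B_\infty\int_0^1 w(zx)\,\wp(\d z)$, at $x_1$ the right-hand side is $\leq 0$ (the local term vanishes and the nonlocal term is nonnegative), so $w'(x_1)<0$ and nothing prevents $w$ from going negative immediately after. The transport $-\tau\partial_x$ carries information to the right, so a comparison principle for $\cL$ needs a condition at a \emph{right} boundary, not just on a compact piece near the origin.

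This is precisely why the paper argues on truncated intervals $[0,L]$ via auxiliary eigenproblems $\phi_L^\pm$ with Dirichlet condition $\phi_L^\pm(L)=0$: the maximum principle (Lemma~\ref{lm:max_principle}) requires $w(L)\geq0$, and the anchoring at $L$ makes the comparison close; one then passes to the limit $L\to\infty$. The two approximating eigenvalues $\lambda_L^-<\lambda<\lambda_L^+$ are also not incidental: they supply the strict sign of the extra term $(\lambda_L^\pm-\lambda)x^k$ in $\cS_L^\pm v$, so the super-/sub-solution inequalities become strict with the correct sign (e.g.\ for the upper bound with $k\leq0$ the inequality $\cS_L^+ x^k>0$ comes entirely from $\lambda_L^+-\lambda>0$). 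In your version, working directly with $\lambda$ makes the leading $x^k$-coefficient vanish exactly, so everything must be carried by the $x^{k-\delta}$ correction; that part is fine, but without a right boundary condition the comparison still cannot be concluded. A further small point: your $\Psi_+=ax^k-bx^{k-\delta}$ is not bounded below by a positive constant near $0$ when $k>0$, which matters for matching on the compact piece; the paper remedies this by taking $v=x^k-x^{k-\epsilon}+1$.
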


The existence of a $k$ satisfying~\eqref{eq:defk} is guaranteed by condition~\eqref{as:wpr_lim}.
Notice that we recover the result in~\cite[Theorem 3.1]{PR05} in the case $\wp=2\delta_\frac12.$
The proof of Theorem~\ref{th:phi_estimates} is based on a truncated problem on $[0,L]$ and uses a maximum principle.
We need to consider two truncated problems:
\begin{equation}\label{eq:phi-}
-\tau(x)(\phi^-_L)'(x)+(B(x)+\lambda^-_L)\phi^-_L(x)=B(x)\int_0^1\phi^-_L(zx)\wp(\d z),\qquad \phi^-_L(L)=0,
\end{equation}
and
\begin{align}
-\tau(x)(\phi^+_L)'(x)+(B(x)+\lambda^+_L)\phi^+_L(x)=B(x)\int_0^1\phi^+_L(zx)\wp(\d z)+\frac1L\indic_{0\leq x\leq1}\phi^+_L(x),\nonumber\\
\phi_L^+(L)=0.\label{eq:phi+}
\end{align}
The existence and uniqueness of a solution for these truncated problems can be obtained by using the Krein-Rutman theorem (see~\cite{DG10} for more details).
When $L$ tends to $+\infty$ the solution to the truncated problem converges to the solution to~\eqref{eq:dual_Perron}.

\begin{theorem}\label{th:truncated}
There exist $K>0$ and $q>0,$ independent of $L,$ such that
\[\forall L>0,\ \forall x\in[0,L],\qquad \phi_L^\mp(x)\leq K(1+x^q).\]
Additionally when $L\to+\infty$ we have the convergences
\[\lambda^\mp_L\to\lambda,\]
\[\forall A>0,\qquad\phi^\mp_L\to\phi\quad\text{uniformly on}\ [0,A].\]
\end{theorem}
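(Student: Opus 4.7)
The plan is to build, for a suitably chosen exponent $q\in(k,1)$, a polynomial function $\psi_q(x)=1+x^q$ that acts as a uniform super-solution of both truncated problems~\eqref{eq:phi-}--\eqref{eq:phi+}, apply a comparison principle to obtain $\phi_L^\mp\le K\psi_q$ with $K$ independent of $L$, and then pass to the limit $L\to+\infty$ by compactness, identifying the limit through the uniqueness statement of Theorem~\ref{th:recall}.

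\textbf{Super-solution computation.} For any real $\mu$, inserting $\psi_q$ in place of $\phi_L^-$ and $\mu$ in place of $\lambda_L^-$ in the left-hand side of~\eqref{eq:phi-} gives
\[
-\tau(x)\psi_q'(x)+(B(x)+\mu)\psi_q(x)-B(x)\int_0^1\psi_q(zx)\,\wp(\d z)
=\mu-B(x)(\wp_0-1)+\bigl(\mu+B(x)(1-\wp_q)\bigr)x^q-q\tau(x)x^{q-1}.
\]
By \textbf{(H$B$)}, for $x\ge A_0$ the coefficient of $x^q$ equals $\mu+B_\infty(1-\wp_q)$; since $q>k$ and $r\mapsto\wp_r$ is strictly decreasing on $(\underline r,+\infty)$, we have $\wp_q<\wp_k=1+\lambda/B_\infty$, so this coefficient is uniformly bounded below by a positive constant as soon as $\mu$ stays in a fixed small neighborhood of $\lambda$. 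By \textbf{(H$\tau$)} one has $\tau(x)x^{q-1}=o(x^q)$ at infinity, hence the whole expression is bounded below by $\eta x^q$ for some $\eta>0$ on $\{x\ge A\}$ with $A$ large enough, uniformly in $\mu$. For the $(+)$ problem one has in addition to absorb the perturbation $\frac1L\indic_{0\le x\le 1}\psi_q$, which is immediate since $\psi_q$ is bounded on $[0,1]$.

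\textbf{Eigenvalue localization and comparison.} Before invoking the super-solution I localize $\lambda_L^\mp$ to a compact neighborhood of $\lambda$ for $L$ large enough. This combines a Krein-Rutman monotonicity argument---the $(-)$ problem becomes less constraining as $L$ grows, so $\lambda_L^-$ is non-decreasing, while the perturbation $\frac1L\indic_{0\le x\le 1}$ in the $(+)$ problem shrinks, so $\lambda_L^+$ is non-increasing---with the bracketing $\lambda_L^-\le\lambda\le\lambda_L^+$, obtained by testing~\eqref{eq:phi-} and~\eqref{eq:phi+} against the direct eigenfunction $G$. With $\lambda_L^\mp$ confined to the range prescribed above, choose $K$ large enough so that $K\psi_q$ dominates $\phi_L^\mp$ on $[0,A]$ and is a super-solution on $[A,L]$. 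Since $K\psi_q(L)>0=\phi_L^\mp(L)$, examining the point where $\phi_L^\mp/(K\psi_q)$ attains its maximum on $[0,L]$ and exploiting the positivity of the gain term $B\int\psi_q(z\,\cdot)\wp(\d z)$ yields $\phi_L^\mp\le K(1+x^q)$ on $[0,L]$, uniformly in $L$.

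\textbf{Passage to the limit.} Rewriting~\eqref{eq:phi-}--\eqref{eq:phi+} as $(\phi_L^\mp)'(x)=\tau(x)^{-1}\{\cdots\}$, the uniform polynomial bound and the local integrability of $1/\tau$ from~\eqref{as:tau0} give equicontinuity of $\phi_L^\mp$ on every compact subinterval of $\bR_+^*$. Arzelà-Ascoli extracts a subsequence $\phi_{L_n}^\mp\to\phi_*$ locally uniformly, with $\lambda_{L_n}^\mp\to\lambda_*$; the $\frac1L$ term vanishes at the limit, so $(\lambda_*,\phi_*)$ solves~\eqref{eq:dual_Perron}, and the uniqueness in Theorem~\ref{th:recall} forces $\lambda_*=\lambda$ and $\phi_*=\phi$, yielding convergence of the whole family. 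The main obstacle is the apparent circularity between the super-solution construction and the eigenvalue bounds: the comparison principle needs $\lambda_L^\mp$ in a prescribed interval, while one might hope to bound $\lambda_L^\mp$ via the estimates on $\phi_L^\mp$. This is resolved by performing the monotonicity and $G$-testing steps \emph{first}, independently of any polynomial estimate on $\phi_L^\mp$.
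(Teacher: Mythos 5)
Your overall strategy — a polynomial super-solution $1+x^q$ fed into the maximum principle of Lemma~\ref{lm:max_principle}, followed by compactness and identification of the limit through the uniqueness statement of Theorem~\ref{th:recall} — is exactly the paper's route, and those parts are fine (with the minor caveat that you need $q>\max(k,0)$, not merely $q>k$, so that $1+x^q$ is bounded below near the origin). The gap is in the eigenvalue-localization step, which is precisely where you claim to have broken the apparent circularity. First, the monotonicity of $L\mapsto\lambda_L^\mp$ is asserted without argument; the operators are not self-adjoint, so a Krein--Rutman comparison is not automatic, and for the $(+)$ problem the two effects of increasing $L$ (growing domain, shrinking perturbation $\frac1L\indic_{[0,1]}$) push $\lambda_L^+$ in opposite directions, so even the direction of any monotonicity is unclear. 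Second, and more seriously, the bracketing $\lambda\leq\lambda_L^+$ is \emph{not} a consequence of $G$-testing alone: the paper's own proof of this inequality (the lemma immediately following Theorem~\ref{th:truncated}) writes
\[
\lambda^+_L-\lambda=\frac1L\int_0^1\phi^+_L\,G\,\d x-\int_0^L\phi^+_L(y)\int_0^{y/L}B\Bigl(\tfrac yz\Bigr)G\Bigl(\tfrac yz\Bigr)\frac{\wp(\d z)}{z}\,\d y,
\]
in which both terms carry the same sign, and positivity of the right-hand side is obtained only by invoking the uniform polynomial bound on $\phi_L^+$ and the convergence $\phi_L^+\to\phi$ — i.e.\ the very conclusions of Theorem~\ref{th:truncated}. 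So your proposed order of operations ("monotonicity and $G$-testing first, super-solution second") does not in fact remove the circularity for the $(+)$ side.

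What the paper actually does is not require $\lambda_L^\mp$ to be pinned near $\lambda$ at this stage, only uniformly bounded, and it defers those cruder uniform bounds to~\cite{DG10} (for instance, a lower bound of the form $\lambda_L^\mp\geq0$ follows from integrating the \emph{direct} truncated eigenproblem against the constant $1$, with no reference to $\phi_L^\mp$). With such a rough lower bound in hand one then picks $q$ large enough — not close to $k$ — so that $\wp_q<1$ and hence the leading coefficient $\lambda_L^\mp+B_\infty(1-\wp_q)$ is positive as soon as $\lambda_L^\mp>-B_\infty(1-\wp_q)$, a much weaker requirement than $\lambda_L^\mp$ near $\lambda$. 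The sharper bracketing $\lambda_L^-<\lambda<\lambda_L^+$ is only established afterwards, in the lemma, once the uniform estimate and the convergence of Theorem~\ref{th:truncated} are already available, and it is used to prove the sharp exponent in Theorem~\ref{th:phi_estimates} rather than Theorem~\ref{th:truncated} itself. You should either import the uniform eigenvalue bounds directly from~\cite{DG10}, or derive a crude lower bound on $\lambda_L^\mp$ from the direct truncated problem, and drop the monotonicity and sharp-bracketing claims from this stage of the proof.
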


\begin{proof}
We only recall the main arguments and we refer to~\cite{DG10} for the details.
The uniform bound on $\phi_L^\mp$ is obtained by the same method we will use to prove Theorem~\ref{th:phi_estimates}.
It is based on the maximum principle stated in Lemma~\ref{lm:max_principle} below.
This uniform estimate combined with uniform bounds on $\lambda_L^\mp$ ensure compactness of the families ${(\lambda^\mp_L)}_{L>1}$ and ${(\phi^\mp_L)}_{L>1}.$
This provides the convergence of subsequences, and the uniqueness of the limit leads to the convergence of the entire family.
\end{proof}

The eigenvalue of first truncated problem approximates $\lambda$ from below, and the second from above.
This will be useful for the estimates on $\phi.$

\begin{lemma}
We have
\[\forall L>0,\quad\lambda^-_L<\lambda,\]
and there exists $L_0$ such that
\[\forall L\geq L_0,\quad\lambda^+_L>\lambda.\]
\end{lemma}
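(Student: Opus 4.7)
The plan is to test both truncated dual equations against the direct Perron eigenfunction $G$ and exploit the identity $(\tau G)'+\lambda G=\cF G$ from~\eqref{eq:direct_Perron}. For the inequality $\lambda^-_L<\lambda$, I would multiply~\eqref{eq:phi-} by $G$, integrate on $[0,L]$, integrate by parts using the boundary conditions $\phi^-_L(L)=0$ and $(\tau G)(0)=0$, and then substitute $(\tau G)'+BG=-\lambda G+\cF_+G$ to obtain
\[
(\lambda^-_L - \lambda)\int_0^L \phi^-_L\,G\,\d x = \int_0^L G\,\cF_+^*\phi^-_L\,\d x - \int_0^L \phi^-_L\,\cF_+G\,\d x.
\]
A Fubini-type rearrangement of the right-hand side, via the change of variable $y=zx$ in the adjoint term, should turn this difference into
\[
-\int_0^L \phi^-_L(y)\int_0^{y/L} G(y/z)\,B(y/z)\,\frac{\wp(\d z)}{z}\,\d y,
\]
which is manifestly nonpositive and yields $\lambda^-_L\leq\lambda$. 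For the strict inequality I would argue that, for $y$ close enough to $L$, the inner interval $(0,y/L)$ captures $\wp$-mass (since $\wp$ is a nontrivial measure supported in $(0,1)$), while $\phi^-_L(y)>0$ on $(0,L)$ by Krein-Rutman (cf.~\cite{DG10}) and $G(y/z)B(y/z)>0$ for $y/z$ large thanks to {\bf(H$B$)}, so the defect is strictly positive.

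For $\lambda^+_L>\lambda$ the same manipulation applied to~\eqref{eq:phi+} produces an additional positive source from the $\frac1L\indic_{[0,1]}$ perturbation, leading to
\[
(\lambda^+_L - \lambda)\int_0^L \phi^+_L\,G\,\d x = \frac1L\int_0^1 \phi^+_L\,G\,\d x - \int_0^L \phi^+_L(y)\int_0^{y/L} G(y/z)\,B(y/z)\,\frac{\wp(\d z)}{z}\,\d y.
\]
The task then reduces to proving that the source dominates the defect for $L$ large. Theorem~\ref{th:truncated} guarantees $\int_0^1\phi^+_L\,G\,\d x\to\int_0^1\phi\,G\,\d x>0$, so the source has exact order $1/L$. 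For the defect, after swapping the order of integration, setting $x=y/z$, and inserting the uniform bound $\phi^+_L(y)\leq K(1+y^q)$ (Theorem~\ref{th:truncated}) together with $B\equiv B_\infty$ on $[A_0,\infty)$, I expect a bound of the shape $B_\infty\,\wp_0\,K\int_L^\infty(1+x^q)\,G(x)\,\d x$.

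The hardest point is closing this last comparison, since one must control the defect at a rate strictly faster than $1/L$. This forces a moment assumption on $G$, namely that $G$ has a finite moment of order $q+1$. Under Hypotheses {\bf(H$\tau$-H$B$-H$\wp$)} the stable profile $G$ is known to decay super-polynomially at infinity (see for instance~\cite{DG10,BCG}), so this moment is available and the tail is in fact $o(1/L)$; the source then dominates the defect and the inequality $\lambda^+_L>\lambda$ holds for $L\geq L_0$.
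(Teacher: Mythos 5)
Your proposal matches the paper's proof in all essential respects: testing each truncated dual equation against $G$, integrating by parts with the boundary conditions $\phi^\mp_L(L)=0$ and $(\tau G)(0)=0$, using $(\tau G)'=-\lambda G-BG+\cF_+G$, Fubini with $x=y/z$ to reduce the adjointness defect to $-\int_0^L\phi^\mp_L(y)\int_0^{y/L}B(y/z)G(y/z)\frac{\wp(\d z)}{z}\,\d y$, and, for $\lambda^+_L$, beating this defect by the $O(1/L)$ source using the uniform bound $\phi^+_L\leq K(1+x^q)$ from Theorem~\ref{th:truncated} together with the super-polynomial decay of $G$ from~\cite{DG10}. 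Your extra remarks (making the strict negativity of the defect explicit, and phrasing the tail control as a finite $(q{+}1)$-moment of $G$) are valid and only sharpen what the paper leaves implicit.
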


\begin{proof}
For $\lambda_L^-$ we have by integration of~\eqref{eq:phi-} against $G$
\[\lambda^-_L-\lambda=-\int_0^L\phi^-_L(y)\int_0^{\frac yL}B(\frac yz)G(\frac yz)\frac{\wp(\d z)}{z}\,\d y<0.\]
For $\lambda_L^+$ the integration of~\eqref{eq:phi+} against $G$ gives
\[\lambda^+_L-\lambda=\frac1L\int_0^1\phi^+_L(x)G(x)\,\d x-\int_0^L\phi^+_L(y)\int_0^{\frac yL}B(\frac yz)G(\frac yz)\frac{\wp(\d z)}{z}\,\d y.\]
To prove that the right hand side is positive for $L$ large enough, we use the convergence of $\phi_L^+$ which ensures that
\[\int_0^1\phi^+_L(x)G(x)\,\d x\xrightarrow[L\to+\infty]{}\int_0^1\phi(x)G(x)\,\d x>0\]
and for the second term we write, using Theorem~\ref{th:truncated},
\begin{align*}
L\int_0^L\phi^+_L(y)\int_0^{\frac yL}B(\frac yz)G(\frac yz)\frac{\wp(\d z)}{z}\,\d y&\leq KL\int_0^\infty(1+y^q)\int_0^{\frac yL}B(\frac yz)G(\frac yz)\frac{\wp(\d z)}{z}\,\d y\\
&=KL\int_0^1\int_{zL}^\infty(1+y^q)B(\frac yz)G(\frac yz)\frac{\d y}{z}\,\wp(\d z)\\
&=KL\int_0^1\int_L^\infty(1+z^qx^q)B(x)G(x)\,\d x\,\wp(\d z)\\
&\leq K\wp_0L\int_L^\infty(1+x^q)B(x)G(x)\,\d x\xrightarrow[L\to+\infty]{}0.
\end{align*}
The last term tends to zero because $G$ decreases faster than any powerlaw at infinity (see~\cite[Theorem 1.1]{DG10}).
\end{proof}

To get estimates on the truncated eigenfunctions $\phi^\pm$ we will use the following maximum principle. We refer the interested readers to~\cite[Lemma 3.2]{BCG} or~\cite[Appendix C]{DG10} for a proof of this result.

\begin{lemma}[Maximum principle]
\label{lm:max_principle}
Let $0<A<L$ and assume that $w\geq0$ on $[0,A],$ $w(L)\geq0$ and $w$ is a supersolution on $(A,L)$ in the sense that for all $x\in(A,L)$ we have
\[\mathcal S_L^\pm w(x):=-\tau(x)w'(x)+\lambda_L^{\pm}w(x)+B(x)w(x)-B(x)\int_0^1w(zx)\wp(\d z)>0.\]
Then $w\geq0$ on $[0,L].$
\end{lemma}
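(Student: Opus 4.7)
The plan is to argue by contradiction, using the strictly positive truncated eigenfunction $\phi_L^\pm$ as a multiplier; this is the classical device that converts the nonlocal fragmentation operator into a form whose minimum principle can be read off pointwise at an extremum.

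Concretely, assume for contradiction that $w<0$ at some point of $[A,L]$, and introduce the quotient $u := w/\phi_L^\pm$ on $[0,L)$, which is well defined since $\phi_L^\pm>0$ there. Substituting $w = u\phi_L^\pm$ into $\mathcal S_L^\pm w$ and eliminating the zeroth-order-in-$u$ contributions by means of the eigenvalue equations~\eqref{eq:phi-}--\eqref{eq:phi+} should yield the identity
\begin{equation*}
\mathcal S_L^\pm w(x) = -\tau(x)\phi_L^\pm(x)\,u'(x) - B(x)\int_0^1 \bigl[u(zx)-u(x)\bigr]\phi_L^\pm(zx)\,\wp(\d z) + r^\pm(x),
\end{equation*}
where $r^-\equiv 0$ and $r^+(x)=\frac{1}{L}\indic_{[0,1]}(x)\,u(x)\,\phi_L^+(x)$. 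The crucial gain is that the nonlocal term now carries the sign-definite difference $u(zx)-u(x)$.

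Next I would argue that the negative infimum of $u$ is attained at an interior point $x_0\in(A,L)$. On $[0,A]$, $u\geq 0$ because $w\geq 0$ and $\phi_L^\pm>0$; at the right endpoint, $\phi_L^\pm(L)=0$ while $w(L)\geq 0$, so when $w(L)>0$ the ratio $u$ blows up at $L$ and the infimum lies inside a compact subinterval $[A,L-\delta]$. The borderline case $w(L)=0$ is handled by a standard perturbation $w\leadsto w+\eta\chi$ with $\chi$ strictly positive (in particular at $L$), chosen so that $\mathcal S_L^\pm \chi$ is controlled; by the strict positivity $\mathcal S_L^\pm w>0$ this preserves the supersolution property for small $\eta>0$, one concludes $w+\eta\chi\geq 0$, and then lets $\eta\to 0^+$.

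At the interior minimum $x_0$ we have $u'(x_0)=0$ and $u(zx_0)\geq u(x_0)$ for every $z\in(0,1)$, so each of the three terms on the right-hand side of the identity is non-positive---the transport term vanishes, the integral term is $\leq 0$ because $B,\phi_L^\pm\geq 0$ and $u(zx_0)-u(x_0)\geq 0$, and $r^\pm(x_0)\leq 0$ because $u(x_0)<0$ and $\phi_L^+\geq 0$---contradicting the strict positivity $\mathcal S_L^\pm w(x_0)>0$. I expect the only serious obstacle to be the boundary analysis in the case $w(L)=0$: engineering the perturbation $\chi$ so as to simultaneously lift the boundary trace and keep $\mathcal S_L^\pm(w+\eta\chi)>0$ is the one step where some genuine work is needed, the rest being a mechanical minimum-principle computation.
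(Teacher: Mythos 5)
The paper does not prove this lemma itself---it refers to \cite[Lemma~3.2]{BCG} and \cite[Appendix~C]{DG10}---so there is no ``paper proof'' to compare against. Your proposal is judged on its own merits.

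Your quotient (ground-state) transform $u=w/\phi_L^{\pm}$ is the right device, and the identity you write is correct: substituting $w=u\phi_L^{\pm}$ and using~\eqref{eq:phi-} (resp.~\eqref{eq:phi+}) indeed gives
\[
\mathcal S_L^{\pm}w=-\tau\phi_L^{\pm}u'-B\int_0^1\bigl[u(z\cdot)-u\bigr]\phi_L^{\pm}(z\cdot)\,\wp(\d z)+r^{\pm},
\]
with $r^-\equiv0$ and $r^+=\tfrac1L\indic_{[0,1]}u\phi_L^{+}$, and the sign analysis at an interior negative minimum $x_0\in(A,L)$ is correct, $r^{+}(x_0)\le 0$ included. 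The case $w(L)>0$ is also fine, since then $u\to+\infty$ at $L^-$ and the infimum is interior.

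The genuine gap is the borderline case $w(L)=0$, and your perturbation step as written does not close it. You propose $w\rightsquigarrow w+\eta\chi$ with $\chi>0$ at $L$ and claim ``by the strict positivity $\mathcal S_L^{\pm}w>0$ this preserves the supersolution property for small $\eta$.'' But pointwise strict positivity of $\mathcal S_L^{\pm}w$ on the open interval $(A,L)$ does \emph{not} give $\inf_{(A,L)}\mathcal S_L^{\pm}w>0$ (the infimum may well vanish at the endpoints), so ``small $\eta$'' need not exist unless $\mathcal S_L^{\pm}\chi\ge0$. And the one natural $\chi$ that satisfies $\mathcal S_L^{\pm}\chi\ge0$ for free, namely $\chi=\phi_L^{\pm}$ itself, vanishes at $L$ and therefore only translates $u$ by a constant, which does not move the infimum off the boundary. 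Note also that the case $w(L)=0$ is not an academic corner: in Step~3 of the proof of Theorem~\ref{th:phi_estimates} the lemma is applied to $w=\phi_L^{-}-c\,v_L$ with $v_L(L)=\phi_L^{-}(L)=0$, so one must handle it. A correct treatment needs an extra ingredient---for instance non-degeneracy of $\phi_L^{\pm}$ at $L$ (so that $u$ extends continuously to $[A,L]$ with $u(L)=w'(L)/(\phi_L^{\pm})'(L)$ and one can argue at $x=L$ by passing to the limit $x\to L^-$ in the identity, using $\phi_L^{\pm}(L)=0$ to kill the transport term), or a more carefully engineered $\chi$ with both $\chi(L)>0$ and $\mathcal S_L^{\pm}\chi\ge0$ on $(A,L)$. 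As it stands, that step is asserted rather than proved, and the assertion ``$\mathcal S_L^{\pm}w>0$ makes the small-$\eta$ perturbation harmless'' is not justified.
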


We are now ready to prove Theorem~\ref{th:phi_estimates}.
The behavior of $\phi$ at the origin is readily deduced from a result in~\cite{BCG}.
The more delicate point is the behavior at infinity and there is no result available in~\cite{BCG} in the case $B$ bounded.
The difficulty lies in the construction of super-solutions and sub-solutions and it requires the strong assumption~\eqref{as:B_infty}.
We will use that for any $r>\underline r$ and any $x>A_0$ (so that $B(x)=B_\infty$) we have, since $\lambda=B_\infty(\wp_k-1),$
\begin{align*}
\cS_L^\pm x^r&=-r\tau(x)x^{r-1}+\lambda_L^\pm x^r+B_\infty(1-\wp_r)x^r\\
&=-r\tau(x)x^{r-1}+[\lambda_L^\pm-\lambda+B_\infty(\wp_k-\wp_r)]x^r.
\end{align*}
We will combine this identity with $\lambda_L^-<\lambda<\lambda_L^+$ and the decay of the function $r\mapsto\wp_r.$

\begin{proof}[Proof of Theorem~\ref{th:phi_estimates}]
We split it into three steps.

\noindent{\it Step \#1: Convergence at the origin.}
Define
\[\Lambda(x):=\int_1^x\frac{\lambda+B(y)}{\tau(y)}\,\d y\]
the primitive of $\frac{\lambda+B}{\tau}$ which vanishes at $x=1$ (and thus is negative for $0<x<1$ as $\frac{\lambda+B}{\tau}$ is positive).
From~\cite[Theorem 1.10]{BCG} we know that $\phi(x)$ behaves like a positive constant times $\e^{-\Lambda(x)}.$
Condition~\eqref{as:tau0} ensures that $\Lambda(0)<+\infty,$ so $\phi(x)$ converges to a positive constant when $x\to0.$

\

\noindent{\it Step \#2: Upper bound at infinity.}
We start with the case $k\leq0$ and define $v(x)=x^k.$
For $x>A_0$ we have
\[\cS_L^+ v(x)=-k\tau(x)x^{k-1}+(\lambda_L^+-\lambda)x^k>0,\]
and for $x>1$ we have $\cS_L^+\phi_L^+(x)=0.$
We deduce that if $L>A:=\max(A_0,1)$ then for any constant $C>0$ the function $Cv-\phi_L^+$ is a supersolution for $\cS_L^+$ on $(A,L).$
On the interval $[0,A],$ the function $v$ is bounded from below by $A^k>0$ and since $\phi_L^+\to\phi$ uniformly on $[0,A]$ and $\phi$ is bounded on $[0,A]$
we can find $C>0$ such that $Cv\geq\phi_L^+$ on $[0,A]$ for all $L$ large enough.
For $x=L$ we have $Cv(L)=CL^k>\phi_L^+(L)=0.$
The assumptions of Lemma~\ref{lm:max_principle} are satisfied for $w=Cv-\phi_L^+,$ $A=\max(1,A_0)$ and $L$ large enough.
The maximum principle ensures that $\phi_L^+\leq Cv$ on $[0,L]$ and then $\phi\leq Cv$ on $\bR_+$ by passing to the limit $L\to+\infty.$

\medskip

When $k>0$ we need to modify a bit the function $v$ to get a supersolution.
Define $v(x)=x^k-x^{k-\epsilon}+1$ with $0<\epsilon<\min(k,1-\alpha),$ where $\alpha$ is defined in~\eqref{as:tau_infty}.
This function $v$ is bounded from below by a positive constant.
We compute
\begin{align*}
\cS_L^+ v(x)&=-\tau(x)v'(x)+(\lambda_L^+-\lambda)v(x)-B_\infty(\wp_k-\wp_{k-\epsilon})x^{k-\ep}+B_\infty(\wp_k-\wp_0)\\
&>-\tau(x)v'(x)+B_\infty(\wp_{k-\epsilon}-\wp_k)x^{k-\ep}+B_\infty(\wp_k-\wp_0).
\end{align*}
When $x\to+\infty,$ the dominant term in the last line above is $B_\infty(\wp_{k-\epsilon}-\wp_k)x^{k-\epsilon}.$
Indeed we have chosen $\epsilon$ such that $k-\epsilon>0$ and $k-\epsilon>k-1+\alpha,$ and when $x\to+\infty$ we have $\tau(x)v'(x)\sim\tau kx^{k-1+\alpha}.$
Since $r\mapsto\wp_r$ is a decreasing function, this dominant term is positive.
We deduce that we can find $A$ large enough such that for any $L>A$ and any $C>0,$ $Cv-\phi_L^+$ is a supersolution of $\cS^+_L$ on $(A,L).$
We conclude as in the case $k\leq0$ that there exists $C>0$ such that $\phi(x)\leq Cv(x)$ for all $x\geq0.$

\

\noindent{\it Step \#3: Lower bound at infinity.}
Choose $\epsilon\in(0,1-\alpha)$ such that $k-2\epsilon>\underline r$ and define $v_L(x)=(x^k+x^{k-\epsilon}-x^{k-2\epsilon})(1-\frac xL).$
We write
\[\cS_L^- v_L(x)=\cS_L^- v(x)+\frac1L\cS_L^- \tilde v(x)\]
where $v(x)=x^k+x^{k-\epsilon}-x^{k-2\epsilon}$ and $\tilde v(x)=-x^{k+1}-x^{k+1-\epsilon}+x^{k+1-2\epsilon},$
and we compute
\begin{align*}
\cS_L^- v(x)&=-\tau(x)v'(x)+(\lambda_L^--\lambda)(x^k+x^{k-\epsilon})+B_\infty(\wp_k-\wp_{k-\epsilon})x^{k-\epsilon}\\
&\hspace{60mm} -[\lambda_L^-+B_\infty(1-\wp_{k-2\epsilon})]x^{k-2\epsilon}\\
&<-\tau(x)v'(x)+[B_\infty(\wp_k-\wp_{k-\epsilon}) +x^{-\epsilon}B_\infty\wp_{k-2\epsilon}]x^{k-\epsilon}.
\end{align*}
The last line is equivalent to $B_\infty(\wp_k-\wp_{k-\epsilon})x^{k-\epsilon}$ when $x\to+\infty.$
We deduce that $\cS_L^-v(x)<0$ for $x$ large enough since $\wp_k<\wp_{k-\epsilon}.$
Using the convergence $\lambda_L^-\to\lambda$ when $L\to+\infty$ and the decay of $r\mapsto\wp_r$ we have
\begin{align*}
\cS_L^-\tilde v(x)&=-\tau(x)\tilde v'(x)+(B_\infty(\wp_{k+1}-\wp_{k})+\lambda-\lambda_L^-)x^{k+1}\\
&\hspace{2mm} +(B_\infty(\wp_{k+1-\epsilon}-1)-\lambda_L^-)x^{k+1-\epsilon}-(B_\infty(\wp_{k+1-2\epsilon}-1)-\lambda_L^-)x^{k+1-2\epsilon}\\
&\hspace{-4mm}\underset{x\to+\infty}{\sim}\underbrace{(B_\infty(\wp_{k+1}-\wp_{k})+\lambda-\lambda_L^-)}_{<0\ \text{for $L$ large}}x^{k+1}.
\end{align*}
We deduce that for any $c>0$ and for $A$ large enough, the function $\phi_L^--c\,v_L$ is a supersolution to $\cS_L^-$ on $(A,L)$ for any $L>A.$
Since $v_L$ is upper bounded on $[0,A]$ and $v_L(L)=\phi_L^-(L)=L$ we can conclude
by arguing as in Step~2 that there exists $c>0$ small enough such that $\phi_L^-\geq c\,v_L$ on $[0,L]$ for $L$ large enough.
Passing to the limit $L\to+\infty$ we get $\phi(x)\geq c\,v(x)$ for all $x\geq0$ and then $\phi(x)\geq c\,x^k$ for all $x\geq1.$

Notice that when $k\geq0$ we can consider the simpler subsolution $v(x)=x^k(1-\frac xL).$

\end{proof}

%
%

\section{Semigroup and mild solution}\label{sec:semigroups}

In this section we prove that the operator $(\cA,D(\cA))$ generates a positive $C_0$-semigroup.
This ensures that the abstract Cauchy problem~\eqref{eq:abstract_resc} admits a unique mild solution.
Recall that the rescaled growth-fragmentation operator $\cA$ is defined by
\[\cA g=-(\tau g)'-\lambda g-Bg+\cF_+ g\]
with domain
\[D(\cA)=\bigl\{g\in L^1_\phi\,|\, (\tau g)'\in L^1_\phi,\, (\tau g)(0)=0\bigr\}.\]
First we need to check that $\cA g$ is well defined for $g\in D(\cA).$
Clearly $(\tau g)',$ $\lambda g$ and $Bg$ are well defined and belong to $L^1_\phi$ when $g\in D(\cA).$
The only term which requires attention is $\cF_+g.$
Since we consider kernels $\wp$ which are not necessarily absolutely continuous with respect to the Lebesgue measure,
the definition~\eqref{def:cF+} of $\cF_+$ does not have a classical sense for an arbitrary function $f$ in $L^1_\phi,$ or even in $D(\cA).$
It is well defined for functions in the dense subspace
\[\cX:=\bigl\{g\in L^1_\phi,\ Bg\in C_c(\bR_+)\bigr\}.\]
Lemma~\ref{lm:F+def} below ensures that the operator $\mathcal F_+:\cX\subset L^1_\phi\to L^1_\phi$ defined by~\eqref{def:cF+} can be uniquely extended into a continuous operator on $L^1_\phi.$
From now on when talking about the operator $\mathcal F_+$ we mean this extension and then $\cA g$ is well defined for $g\in D(\cA).$

\begin{lemma}\label{lm:F+def}
There exists a unique bounded operator $\mathcal F_+:L^1_\phi\to L^1_\phi$ such that~\eqref{def:cF+} holds for any $f\in \cX.$
Additionally
\[\|\mathcal F_+\|_{\cL(L^1_\phi)}\leq C^2\max(\wp_0,\wp_k)\|B\|_\infty\]
where $C$ is the constant which appears in Theorem~\ref{th:phi_estimates}.
\end{lemma}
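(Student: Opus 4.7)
The plan is twofold: establish the norm estimate on the dense subspace $X$, then extend by continuity using the bounded linear transformation theorem. For $f\in X$, the integrand in $\cF_+ f(x)=\int_0^1 B(x/z)f(x/z)\wp(\d z)/z$ has compact support in the variable $x/z$ because $Bf\in C_c(\bR_+)$, so $\cF_+ f$ is well defined pointwise. I would bound $|\cF_+ f|$ pointwise by the corresponding integral with $|f|$, multiply by $\phi(x)$, integrate in $x$, and apply Tonelli's theorem to obtain
\[\|\cF_+ f\|_{L^1_\phi}\leq\int_0^1\int_0^\infty B(x/z)|f(x/z)|\phi(x)\,\frac{\d x}{z}\,\wp(\d z).\]
The change of variable $y=x/z$ (with $\d x=z\,\d y$) in the inner integral, followed by a second application of Tonelli, yields the key identity
\[\|\cF_+ f\|_{L^1_\phi}\leq\int_0^\infty B(y)|f(y)|\left(\int_0^1\phi(zy)\,\wp(\d z)\right)\d y.\]

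The heart of the argument is then to dominate $\int_0^1\phi(zy)\,\wp(\d z)$ by a constant multiple of $\phi(y)$. Using the upper bound $\phi(zy)\leq C(1+(zy)^k)$ from Theorem~\ref{th:phi_estimates}, the inner integral is controlled by $C(\wp_0+y^k\wp_k)$; combining the monotonicity of $r\mapsto\wp_r$ on $(\underline r,+\infty)$ (which lets me replace $\wp_k$ by $\wp_0$ up to a constant that I absorb into $C$) with the lower bound $1+y^k\leq C\phi(y)$, I would produce
\[\int_0^1\phi(zy)\,\wp(\d z)\leq C^2\wp_0\,\phi(y).\]
Injecting this estimate and using $B\leq\|B\|_\infty$ gives $\|\cF_+ f\|_{L^1_\phi}\leq C^2\wp_0\|B\|_\infty\|f\|_{L^1_\phi}$ for every $f\in X$.

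Finally, since $X$ is dense in $L^1_\phi$ (as already noted in the paragraph preceding the statement), the bounded linear transformation theorem provides a unique continuous extension $\cF_+:L^1_\phi\to L^1_\phi$ obeying the same operator bound, and uniqueness follows immediately from continuity and density. The main delicate point I foresee is the book-keeping in the middle step: when $k<0$ one has $\wp_k>\wp_0$, so the passage from $\wp_0+y^k\wp_k$ to $\text{const}\cdot\wp_0(1+y^k)$ forces the constant $C$ from Theorem~\ref{th:phi_estimates} to absorb a factor of order $\wp_k/\wp_0$; apart from this subtlety, the argument reduces to two applications of Fubini together with a single change of variables.
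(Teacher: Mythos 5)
Your argument follows the paper's proof line for line: Tonelli, the substitution $y=x/z$, the two-sided estimate on $\phi$ from Theorem~\ref{th:phi_estimates} applied first as $\phi(zy)\leq C(1+z^ky^k)$ and then as $1+y^k\leq C\phi(y)$, and finally extension by density via the bounded linear transformation theorem. The subtlety you flag at the end is real and applies equally to the paper: the step $\int_0^1(1+z^ky^k)\wp(\d z)\leq(1+y^k)\wp_0$ uses $z^k\leq1$, which fails when $k<0$ (and $k<0$ can occur, since $\wp_k=1+\lambda/B_\infty$ may exceed $\wp_0$ when $B$ is not constant and $\lambda>B_\infty$); in that case the honest bound is $\|\cF_+\|_{\cL(L^1_\phi)}\leq C^2\max(\wp_0,\wp_k)\|B\|_\infty$, which is harmless for the rest of the paper since only boundedness of $\cF_+$ is ever used except in Theorem~\ref{th:bound_spectralgap}, where the explicit case treated has $k=0$.
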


\begin{proof}
It is a consequence of the continuous extension theorem, provided we prove
\[\forall f\in \cX,\qquad\biggl\|\int_0^1B\Bigl(\frac{\cdot}{z}\Bigr)f\Bigl(\frac{\cdot}{z}\Bigr)\frac{\wp(\d z)}{z}\biggr\|_{L^1_\phi}\leq C^2\max(\wp_0,\wp_k)\|B\|_\infty\|f\|_{L^1_\phi}.\]
Choose $f\in \cX.$
If $k\geq0,$ Theorem \ref{th:phi_estimates} implies
\begin{align*}
\biggl\|\int_0^1 B\Bigl(\frac{\cdot}{z}\Bigr)f\Bigl(\frac{\cdot}{z}\Bigr)\frac{\wp(\d z)}{z}\biggr\|_{L^1_\phi}&\leq\int_0^\infty\int_0^1 B\Bigl(\frac xz\Bigr)\Bigl|f\Bigl(\frac xz\Bigr)\Bigr|\frac{\wp(\d z)}{z}\phi(x)\d x\\
&\leq\int_0^1\int_0^\infty B(y)|f(y)|\phi(zy)\,\d y\,\wp(\d z)\\
&\leq \|B\|_\infty C\int_0^\infty|f(y)|\int_0^1(1+zy)^k\wp(\d z)\,\d y\\
&\leq \|B\|_\infty C\int_0^\infty|f(y)|(1+y)^k\biggl(\int_0^1\wp(\d z)\biggr)\,\d y\\
&\leq \|B\|_\infty C^2\wp_0\|f\|_{L^1_\phi},
\end{align*}
and if $k<0$
\begin{align*}
\biggl\|\int_0^1 B\Bigl(\frac{\cdot}{z}\Bigr)f\Bigl(\frac{\cdot}{z}\Bigr)\frac{\wp(\d z)}{z}\biggr\|_{L^1_\phi}
&\leq \|B\|_\infty C\int_0^\infty|f(y)|\int_0^1(1+zy)^k\wp(\d z)\,\d y\\
&\leq \|B\|_\infty C\int_0^\infty|f(y)|(1+y)^k\biggl(\int_0^1z^k\wp(\d z)\biggr)\,\d y\\
&\leq \|B\|_\infty C^2\wp_k\|f\|_{L^1_\phi}.
\end{align*}
The conclusion follows from the fact that $\max(\wp_0,\wp_k)=\wp_0$ if and only if $k\geq0.$
\end{proof}

Lemma~\ref{lm:F+def} ensures that $\cA$ is a bounded perturbation of the transport operator
\[\cA_0g:=-(\tau g)'-\lambda g-Bg\]
with domain
\[D(\cA_0)=D(\cA).\]
To prove that $\cA=\cA_0+\cF_+$ generates a $C_0$-semigroup we first prove that $\cA_0$ generates a $C_0$-semigroup
and we then use a bounded perturbation theorem.

\begin{proposition}\label{prop:gen_St}
The transport operator $(\cA_0,D(\cA_0))$ generates a positive contraction $C_0$-semigroup $(S_t)_{t\geq0}.$
\end{proposition}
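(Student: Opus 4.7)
The plan is to apply the Lumer--Phillips theorem (see, e.g., \cite[Thm.~II.3.15]{EN}): one must exhibit a densely defined dissipative operator whose range satisfies $\mathrm{Range}(\mu I - \cA_0) = L^1_\phi$ for some $\mu > 0$. The density of $D(\cA_0)$ in $L^1_\phi$ is immediate since $C_c^\infty((0,\infty)) \subset D(\cA_0)$ is dense in $L^1_\phi$ thanks to the two-sided polynomial bounds on $\phi$ from Theorem~\ref{th:phi_estimates}. Positivity of the semigroup will follow at no extra cost, because the resolvent formula built for the range condition is manifestly positivity-preserving.

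For dissipativity, the crucial observation is that the weight $\phi$ is tailor-made for this operator. Given $g \in D(\cA_0)$, I would pair $\cA_0 g$ with $\Sign(g)\phi$, invoking Kato's inequality to pass the sign under the derivative, and integrate by parts to obtain
\[
\int_0^\infty \Sign(g)\,(\cA_0 g)\,\phi\,\d x \leq \int_0^\infty |g|\bigl(\tau\phi' - \lambda\phi - B\phi\bigr)\d x - \bigl[\tau|g|\phi\bigr]_0^\infty.
\]
By the dual Perron equation~\eqref{eq:dual_Perron}, the bracket under the integral equals $-\cF_+^*\phi$, which is nonpositive; the boundary term at $0$ vanishes thanks to the condition $(\tau g)(0)=0$ encoded in $D(\cA_0)$, and the contribution at $+\infty$ vanishes because $\tau g\phi$ is absolutely continuous with integrable derivative. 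This yields the dissipativity in the $L^1_\phi$-norm.

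For the range condition, I would fix $\mu > 0$ and $h \in L^1_\phi$ and solve explicitly the first-order linear ODE $\mu g + (\tau g)' + (\lambda+B)g = h$ with $(\tau g)(0)=0$. Setting $u = \tau g$ and applying the integrating-factor method (made sensible near the origin by~\eqref{as:tau0}) gives
\[
u(x) = \int_0^x \exp\!\biggl(-\int_y^x \frac{\mu+\lambda+B(z)}{\tau(z)}\,\d z\biggr)h(y)\,\d y,
\]
a formula that preserves positivity. Testing the equation against $\phi$ when $h\geq 0$ and using again the dual Perron relation produces $\mu\|g\|_{L^1_\phi} + \langle g,\cF_+^*\phi\rangle = \|h\|_{L^1_\phi}$, hence $\|g\|_{L^1_\phi}\leq\mu^{-1}\|h\|_{L^1_\phi}$. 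For signed data the pointwise domination $|g|\leq\widetilde g$, with $\widetilde g$ the solution associated with source $|h|$, reduces the estimate to the previous case.

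The main obstacle I anticipate is the rigorous justification of the boundary manipulations in the dissipativity step: one must verify that for $g\in D(\cA_0)$ the product $\tau g\phi$ has a well-defined trace at $0$ and a vanishing limit at infinity. The trace at $0$ is legitimate because $\tau g$ is absolutely continuous (its distributional derivative belongs to $L^1_\phi\subset L^1_{loc}$), while the decay at infinity has to be extracted from the integrability $\int|g|\phi\,\d x<\infty$, the bound $(\tau g)'\in L^1_\phi$, and the polynomial estimates of Theorem~\ref{th:phi_estimates}. Once this is settled, Lumer--Phillips yields that $(\cA_0,D(\cA_0))$ generates a $C_0$ contraction semigroup $(S_t)_{t\geq 0}$, and its positivity is inherited from that of $(\mu I - \cA_0)^{-1}$ via the exponential formula $S_t = \lim_n (I - \tfrac{t}{n}\cA_0)^{-n}$.
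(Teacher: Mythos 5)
Your proof uses the same Lumer--Phillips framework and the same explicit resolvent formula as the paper, but the key $L^1_\phi$-estimate is obtained by a different device. The paper never integrates by parts: it observes that $x\mapsto\phi(x)\e^{-\Lambda(x)}$ is \emph{decreasing} (its derivative equals $-\tfrac1\tau\cF_+^*\phi\le0$ by the dual Perron equation), and then applies Tonelli directly to the double integral
\[
\int_0^\infty\frac{\phi(x)}{\tau(x)}\e^{-\Lambda(x)}\int_0^x|h(y)|\e^{\Lambda(y)}\e^{-\int_y^x\frac{\mu}{\tau}}\,\d y\,\d x,
\]
bounding $\phi(x)\e^{-\Lambda(x)}\le\phi(y)\e^{-\Lambda(y)}$ for $x\ge y$, and evaluating the remaining inner integral to $\le\mu^{-1}$. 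This gives $\|g\|_{L^1_\phi}\le\mu^{-1}\|h\|_{L^1_\phi}$ in one stroke, with no a priori integrability assumption on $g\phi$ and no boundary terms, and it simultaneously delivers dissipativity and surjectivity. Your route — Kato's inequality plus integration by parts for dissipativity, and then ``testing against $\phi$'' for the resolvent estimate — is a valid standard alternative, but it needs the boundary term $\bigl[\tau|g|\phi\bigr]_0^\infty$ to vanish on two separate occasions (you flag the first but not the second), and the second occurrence is mildly circular: testing the equation against $\phi$ presupposes $g\phi\in L^1$, which is what you are trying to establish. That is fixable by a truncation of $h$, but it is exactly the extra layer of justification the paper's monotonicity-plus-Tonelli argument is designed to avoid. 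Your argument would also benefit from explicitly checking $(\tau g)'\in L^1_\phi$ for the constructed $g$ (the paper does this via the equation itself and the boundedness of $B$), since that is part of membership in $D(\cA_0)$.
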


\begin{proof}
We use the Lumer-Philipps theorem (see for instance~\cite[Theorem II.3.15]{EN}).
Clearly $D(\cA_0)$ is dense in $L^1_\phi.$
It remains to prove that for all $\mu>0,$
\[\begin{array}{c}
\forall g\in D(\cA_0),\qquad\|(\mu-\cA_0)g\|_{L^1_\phi}\geq\mu\|g\|_{L^1_\phi}\qquad\text{(dissipativity)}
\vspace{1mm}\\
\text{and}\quad\forall h\in L^1_\phi,\,\exists g\in D(\cA_0),\qquad (\mu-\cA_0)g=h\qquad\text{(surjectivity)}
\end{array}\]
or equivalently
\[\forall h\in L^1_\phi,\,\exists g\in D(\cA_0),\qquad (\mu-\cA_0)g=h\quad\text{and}\quad\|h\|_{L^1_\phi}\geq\mu\|g\|_{L^1_\phi}.\]
Let $\mu>0$ and $h\in L^1_\phi.$
We want to solve
\be\label{eq:ghmu}\mu g+(\tau g)'+\lambda g+Bg=h,\qquad x>0,\ee
with the condition $(\tau g)(0)=0.$
We obtain
\be\label{eq:ghmu2}\tau(x)g(x)=\int_0^x \e^{-\int_y^x\frac{\mu+\lambda+B(z)}{\tau(z)}dz}h(y)\,\d y.\ee
We need to verify that $g$ thus defined belongs to $D(\cA_0).$
Recall that we have defined in the proof of Theorem~\ref{th:phi_estimates} the function
\[\Lambda(x)=\int_1^x\frac{\lambda+B(y)}{\tau(y)}\,\d y.\]
A direct computation gives that the function $x\mapsto \phi(x)\e^{-\Lambda(x)}$ is decreasing since its derivative is equal to $-\frac1\tau\cF_+^*\phi.$
Using this property we get
\begin{align*}
\int_0^\infty |g(x)|\phi(x)\,\d x&\leq\int_0^\infty\frac{\phi(x)}{\tau(x)}\e^{-\Lambda(x)} \int_0^x|h(y)|\e^{\Lambda(y)}\e^{-\int_y^x\frac{\mu}{\tau(z)}\,dz}\d y\d x\\
&=\int_0^\infty |h(y)|\e^{\Lambda(y)} \int_y^\infty \frac{\phi(x)}{\tau(x)}\e^{-\Lambda(x)}\e^{-\int_y^x\frac{\mu}{\tau(z)}\,dz}\d x\d y\\
&\leq\int_0^\infty |h(y)|\phi(y) \int_y^\infty \frac{1}{\tau(x)}\e^{-\int_y^x\frac{\mu}{\tau(z)}\,dz}\d x\d y\\
&=\frac1\mu\|h\|_{L^1_\phi}
\end{align*}
so $g\in L^1_\phi$ and $\|g\|_{L^1_\phi}\leq\frac1\mu\|h\|_{L^1_\phi}.$
It remains to check that $(\tau g)'\in L^1_\phi.$
Using Equation~\eqref{eq:ghmu} it is an immediate consequence of the fact that $g,h\in L^1_\phi$ and that $B$ is bounded.

The positivity of the semigroup is a consequence of the positivity of the resolvent $(\mu-\cA_0)^{-1}$ (see for instance~\cite[Characterization Theorem VI.1.8]{EN}), which is obvious from~\eqref{eq:ghmu2}.
\end{proof}

Since $\cF_+$ is a positive bounded perturbation of $\cA_0,$ we have the following corollary.

\begin{corollary}
The rescaled growth-fragmentation operator $(\cA,D(\cA))$ generates a positive contraction $C_0$-semigroup $(T_t)_{t\geq0}.$
This semigroup can be obtained from $(S_t)_{t\geq0}$ by the Dyson-Phillips series
\[T_t=\sum_{n=0}^\infty S_t^{(n)}\]
where $S_t^{(0)}:=S_t$ and
\[S_t^{(n)}:=\int_0^t S_{t-s}\cF_+S_s^{(n-1)}\,\d s.\]
\end{corollary}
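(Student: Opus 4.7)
The plan is to present $\cA$ as a bounded additive perturbation of $\cA_0$ and derive each of the claimed properties (generation with Dyson-Phillips representation, positivity, and contraction) in that order.

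First, since $\cA_0$ generates a $C_0$-semigroup $(S_t)_{t\geq0}$ by Proposition~\ref{prop:gen_St} and $\cF_+$ is a bounded operator on $L^1_\phi$ by Lemma~\ref{lm:F+def}, the classical bounded perturbation theorem (as in~\cite[Theorem III.1.3]{EN}) applies with unchanged domain: $(\cA, D(\cA)) = (\cA_0 + \cF_+, D(\cA_0))$ generates a $C_0$-semigroup $(T_t)_{t\geq0}$ on $L^1_\phi$, and $T_t = \sum_{n\geq 0} S_t^{(n)}$ converges in operator norm uniformly on compact time intervals, with the iterative formula given in the statement. This handles generation and the Dyson-Phillips representation in one stroke.

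Next, I would establish positivity by induction on $n$. The base case $S_t^{(0)} = S_t\geq 0$ is Proposition~\ref{prop:gen_St}; since $\cF_+$ is positivity preserving (its integral kernel is nonnegative), and positivity is preserved by composition and by integration in $s$, each $S_t^{(n)}$ is positive, and so is $T_t$ as their sum.

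For the contraction property, I would use the conservation law~\eqref{eq:cons_law}. For $g \in D(\cA)$ with $g\geq 0$, integration by parts against $\phi$ together with the adjoint identity $\int(\cF_+ g)\phi = \int g(\cF_+^*\phi)$ yields
\[
\int_0^\infty (\cA g)(x)\,\phi(x)\,\d x = \int_0^\infty g(x)\bigl[\tau(x)\phi'(x) - \lambda\phi(x) - B(x)\phi(x) + \cF_+^*\phi(x)\bigr]\d x = 0,
\]
the last equality being precisely the dual eigenequation~\eqref{eq:dual_Perron}; the boundary term at $0$ vanishes by the condition $(\tau g)(0)=0$. Consequently $\frac{\d}{\d t}\langle T_t g,\phi\rangle = \langle \cA T_t g,\phi\rangle = 0$ whenever $T_t g\geq 0$, which together with positivity gives $\|T_t g\|_{L^1_\phi} = \|g\|_{L^1_\phi}$ for $g\geq 0$. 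For an arbitrary $g\in L^1_\phi$, splitting $g=g^+-g^-$ and using $|T_t g|\leq T_t g^+ + T_t g^- = T_t |g|$ yields
\[
\|T_t g\|_{L^1_\phi} \leq \langle T_t|g|,\phi\rangle = \langle|g|,\phi\rangle = \|g\|_{L^1_\phi},
\]
which is the required contraction bound.

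The step I expect to require most care is the justification of the vanishing boundary term at infinity in the integration by parts, i.e.\ that $\tau(x)g(x)\phi(x)\to 0$ as $x\to\infty$ for $g\in D(\cA)$. This should follow by writing $\tau g\phi$ as the integral of $(\tau g)'\phi + \tau g\phi'$, bounding the first term via $g\in L^1_\phi$, and controlling the second by combining the polynomial upper bound $\phi(x)\leq C(1+x^k)$ from Theorem~\ref{th:phi_estimates}, the identity $\tau\phi' = (\lambda+B)\phi - \cF_+^*\phi$ from~\eqref{eq:dual_Perron}, and the sublinear growth of $\tau$ from Hypothesis~\textbf{(H$\tau$)}. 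Once this is settled, the rest is routine semigroup machinery.
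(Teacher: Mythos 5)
Your proof is correct, and your treatment of generation, the Dyson--Phillips series, and positivity essentially matches the paper's (same bounded-perturbation theorem from~\cite{EN}, positivity from that of $S_t$ and $\cF_+$ propagated term by term through the series). The genuine difference is the contraction step. The paper invokes an $L^1$-dissipativity criterion via the duality map $(\Sign g)\phi$ (citing~\cite[Proposition~\textbf{1}.8.22]{Kavian}), which reduces everything to the chain
\[
\langle\cA g,(\Sign g)\phi\rangle=\int(\cA_0|g|+\Sign g\,\cF_+g)\phi\leq\int(\cA_0|g|+\cF_+|g|)\phi=\int|g|\,\cA^*\phi=0,
\]
and then applies Lumer--Phillips. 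You instead derive the conservation law $\langle T_tg,\phi\rangle=\langle g,\phi\rangle$ directly by differentiating along the flow, and combine it with the already-established positivity of $T_t$ (giving $|T_tg|\le T_t|g|$) to conclude. Both routes rest on the same underlying identity $\cA^*\phi=0$; yours trades the clean abstract citation for a more self-contained argument, at the cost of having to verify explicitly that the boundary term $\tau g\phi$ vanishes at infinity --- a point the dissipativity argument also needs but leaves implicit. Your sketch of that verification is sound: since $(\tau g)'\phi$ and $\tau g\phi'=g\bigl[(\lambda+B)\phi-\cF_+^*\phi\bigr]$ are both in $L^1(\d x)$ (the second by boundedness of $B$ and the estimate of Lemma~\ref{lm:F+def} on $\cF_+^*\phi$), the function $\tau g\phi$ has a finite limit $\ell$ at infinity, and $\ell\neq0$ would give $|g(x)\phi(x)|\gtrsim x^{-\alpha}$ with $\alpha<1$ near infinity by~\eqref{as:tau_infty}, contradicting $g\in L^1_\phi$. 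One small imprecision: the identity $\frac{\d}{\d t}\langle T_tg,\phi\rangle=\langle\cA T_tg,\phi\rangle=0$ holds for every $g\in D(\cA)$ with no positivity hypothesis; positivity is only needed at the later step where you identify $\langle T_tg,\phi\rangle$ with $\|T_tg\|_{L^1_\phi}$.
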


\begin{proof}
The bounded perturbation theorem III-1.3 in~\cite{EN} ensures that $\cA$ is the generator of a $C_0$-semigroup.
The positivity of this semigroup follows from the positivity of $(S_t)_{t\geq0}$ and $\cF_+.$
For the contraction we use~\cite[Proposition {\bf1.}8.22]{Kavian} and the definition of $\phi$ to write for $g\in D(\cA)$
\begin{align*}
\langle\cA g,(\Sign g)\phi\rangle=\int_0^\infty\Sign g\,(\cA_0g+\cF_+g)\phi&=\int_0^\infty(\cA_0|g|+\Sign g\,\cF_+g)\phi\\
&\leq\int_0^\infty(\cA_0|g|+\cF_+|g|)\phi=\int_0^\infty |g|\cA^*\phi=0.
\end{align*}
The formulation in terms of the Dyson-Phillips series is ensured by~\cite[Theorem III-1.10]{EN}.
\end{proof}

The last result of this section, given by~\cite[Proposition II-6.4]{EN}, concerns the existence of a unique solution to the abstract Cauchy problem~\eqref{eq:abstract_resc}.

\begin{corollary}
For every $f^{\rm in}\in L^1_\phi$ the orbit map
\[g:t\mapsto g(t)=T_tf^{\rm{in}}\]
is the unique \emph{mild} solution to the abstract Cauchy problem~\eqref{eq:abstract_resc}.
In other words it is the unique continuous function $\bR_+\to L^1_\phi$ which satisfies $\int_0^tg(s)\d s\in D(\cA)$ for all $t\geq0$ and
\[g(t)=\cA\int_0^tg(s)\,\d s+f^{\rm in}.\]
\end{corollary}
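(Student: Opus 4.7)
The plan is to invoke the general characterization of mild solutions for abstract Cauchy problems whose driving operator generates a strongly continuous semigroup, which is exactly the content of Proposition II-6.4 in Engel--Nagel. The previous corollary already established that $(\cA,D(\cA))$ generates the $C_{0}$-semigroup $(T_t)_{t\geq 0}$ on $L^1_\phi$, so all the structural hypotheses of that result are in place; only a matter of verification remains.

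First I would unpack the definition of a mild solution as stated in the corollary: a continuous map $g:\bR_+\to L^1_\phi$ such that $\int_0^t g(s)\,\d s\in D(\cA)$ and $g(t)=\cA\int_0^t g(s)\,\d s+f^{\rm in}$ for every $t\geq 0$. For the existence part, I would set $g(t):=T_tf^{\rm in}$. Continuity is automatic from the strong continuity of $(T_t)_{t\geq0}$, and the Bochner integral $\int_0^t T_sf^{\rm in}\,\d s$ is a well-defined element of $L^1_\phi$. The key identity to check is then
\[
\cA\int_0^t T_s f^{\rm in}\,\d s=T_tf^{\rm in}-f^{\rm in},
\]
which is a standard semigroup identity: one first establishes it on $D(\cA)$ by differentiating $s\mapsto T_sf^{\rm in}$ and integrating, then extends to arbitrary $f^{\rm in}\in L^1_\phi$ by density of $D(\cA)$ together with the closedness of $\cA$ and the boundedness of $T_s$.

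For the uniqueness part, let $g$ be any mild solution and fix $t>0$. I would consider, for $s\in[0,t]$, the auxiliary function $s\mapsto T_{t-s}\int_0^s g(r)\,\d r$. Using the mild equation satisfied by $g$ and the fact that $\frac{\d}{\d s}T_{t-s}u=-\cA T_{t-s}u$ on $D(\cA)$ together with $\frac{\d}{\d s}\int_0^s g(r)\,\d r=g(s)$, a formal differentiation gives
\[
\frac{\d}{\d s}\,T_{t-s}\!\int_0^s g(r)\,\d r=T_{t-s}g(s)-T_{t-s}\cA\!\int_0^s g(r)\,\d r=T_{t-s}f^{\rm in}.
\]
Integrating from $0$ to $t$ and using the closedness of $\cA$ to justify the manipulation, this yields $\int_0^t g(s)\,\d s=\int_0^t T_sf^{\rm in}\,\d s$, and then injecting back into the mild equation forces $g(t)=T_tf^{\rm in}$.

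The only mildly delicate point, and what I would expect to take the most care, is the closedness argument used both to extend the semigroup identity from $D(\cA)$ to $L^1_\phi$ and to commute $\cA$ with the Bochner integral; everything else is purely formal once one has a $C_0$-semigroup at hand. Since this is already packaged in Proposition II-6.4 of~\cite{EN}, the proof in the paper reduces to a direct citation.
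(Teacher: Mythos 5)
Your proposal is correct and takes essentially the same route as the paper, which simply invokes \cite[Proposition II-6.4]{EN} once the generation result for $(\cA,D(\cA))$ is in hand. The additional sketch you give of the internal mechanism of that proposition (the semigroup identity $\cA\int_0^t T_s f^{\rm in}\,\d s = T_tf^{\rm in}-f^{\rm in}$ for existence, and the auxiliary map $s\mapsto T_{t-s}\int_0^s g(r)\,\d r$ for uniqueness) is the standard argument and is consistent with the cited result; the paper itself does not reproduce it.
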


\

%
%

\section{Non-uniform convergence in the case $z_0>0$}\label{sec:z0>0}

The aim of this section is to establish the following theorem, which provides a more precise result than Theorem~\ref{th:main} in the case $z_0=\inf\Supp\wp>0.$
\begin{theorem}\label{th:norm=2}
We assume {\bf(H$\tau$-H$B$-H$\wp$)} and that $\wp$ is such that $z_0>0.$
Then
\[\forall t\geq0,\qquad\|T_t-P\|_{\mathcal L(L^1_\phi)}=2.\]
\end{theorem}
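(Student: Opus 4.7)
The plan is to get the upper bound $\|T_t - P\|_{\mathcal L(L^1_\phi)} \leq 2$ for free and match it by exhibiting test functions on which $T_t f$ and $Pf$ live in essentially disjoint parts of $\bR_+$. By Section~\ref{sec:semigroups} the semigroup $(T_t)$ is a contraction on $L^1_\phi$, and for every $g \in L^1_\phi$ one has $\|Pg\|_{L^1_\phi} = |\langle g, \phi \rangle|\,\|G\|_{L^1_\phi} \leq \|g\|_{L^1_\phi}$ since $\|G\|_{L^1_\phi} = 1$, so the triangle inequality immediately yields the upper bound. For the matching lower bound I will use the test functions $f_a := \indic_{[a, a+1]}/\!\int_a^{a+1} \phi(y)\,\d y$ with $a$ eventually going to $+\infty$; these satisfy $f_a \geq 0$, $\|f_a\|_{L^1_\phi} = \langle f_a, \phi \rangle = 1$, and consequently $Pf_a = G$. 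Positivity of $(T_t)$ together with the conservation law~\eqref{eq:cons_law} gives $\|T_t f_a\|_{L^1_\phi} = 1$, so if I can show that most of the mass of $T_t f_a$ is pushed off to arbitrarily large sizes as $a \to +\infty$, then $\|T_t f_a - G\|_{L^1_\phi}$ will approach $\|T_t f_a\|_{L^1_\phi} + \|G\|_{L^1_\phi} = 2$.

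The decisive step, where the hypothesis $z_0 > 0$ really enters, is a support propagation argument based on the Dyson-Phillips representation $T_t = \sum_{n \geq 0} S_t^{(n)}$. Two elementary facts are the engine: $\cF_+$ sends functions supported in $[b, +\infty)$ to functions supported in $[z_0 b, +\infty)$, since $\cF_+ f(x)$ vanishes unless $x = z(x/z)$ for some $z \in [z_0, 1]$ with $x/z \geq b$; and the transport semigroup $S_s$ only shifts the infimum of the support upward because its characteristics solve $\dot x = \tau(x) > 0$. Unfolding the recursion defining $S_t^{(n)}$ then forces $\operatorname{Supp} S_t^{(n)} f_a \subset [z_0^n a, +\infty)$. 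On the quantitative side, Lemma~\ref{lm:F+def} gives $M := \|\cF_+\|_{\mathcal L(L^1_\phi)} < \infty$, and an easy induction from $\|S_s\| \leq 1$ yields $\|S_t^{(n)}\|_{\mathcal L(L^1_\phi)} \leq (Mt)^n/n!$. So given $\epsilon > 0$ I can choose $N$ such that the Dyson-Phillips tail beyond $N$ has norm less than $\epsilon$, pick $R$ with $\int_R^\infty G\phi < \epsilon$, and finally take $a > R/z_0^N$: the partial sum $\sum_{n \leq N} S_t^{(n)} f_a$ is then nonnegative and supported in $[R, +\infty)$, while $T_t f_a$ differs from it by at most $\epsilon$ in $L^1_\phi$-norm.

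To finish, I will split $\|T_t f_a - Pf_a\|_{L^1_\phi} = \|T_t f_a - G\|_{L^1_\phi}$ into the integrals over $[0, R]$ and $[R, +\infty)$. On $[0, R]$ only the tail of $T_t f_a$ contributes, so the first integral is at least $\int_0^R G\phi - \epsilon > 1 - 2\epsilon$; on $[R, +\infty)$, since $T_t f_a \geq 0$ with total $\phi$-mass $1$, we get $\int_R^\infty T_t f_a\,\phi > 1 - \epsilon$ while $\int_R^\infty G\phi < \epsilon$, so the second integral is at least $1 - 2\epsilon$. Summing yields $\|T_t - P\|_{\mathcal L(L^1_\phi)} \geq 2 - 4\epsilon$ for every $\epsilon > 0$, which gives the matching lower bound. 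The hardest part is clearly the support-propagation step, where $z_0 > 0$ is used in an essential way; this is precisely what will have to be replaced by an abstract quasi-compactness argument to handle $z_0 = 0$ in Section~\ref{sec:z0=0}.
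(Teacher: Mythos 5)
Your proof is correct and follows essentially the same route as the paper: the paper's Lemmas~\ref{lm:supp_St} and~\ref{lm:Dyson-Philipps} are exactly your support-propagation and $(Mt)^n/n!$ estimates, and the paper splits $\|T_tg_a - G\|_{L^1_\phi}$ at a cutoff $X$ and uses the conservation law precisely as you do, the only differences being that the paper normalizes its test function as $g_a = \phi^{-1}\indic_{[a,a+1]}$ rather than $\indic_{[a,a+1]}/\int_a^{a+1}\phi$ and lets the Dyson--Phillips cutoff index $\log_{1/z_0}(a/X)$ grow with $a$ rather than fixing an $N$ first. A minor improvement on your side: you justify $\|P\|_{\cL(L^1_\phi)}\le 1$ from the explicit formula $Pg=\langle g,\phi\rangle G$ and the normalization $\int G\phi=1$, whereas the paper's appeal to ``$P$ is a projection'' alone is not sufficient for that inequality.
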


\

To prove this theorem, we start with two lemmas.
Define $g_a(x)=\frac{1}{\phi(x)}\indic_{a\leq x\leq a+1}.$

\begin{lemma}\label{lm:supp_St}
Assume that $\wp$ is such that $z_0>0.$
Then for all $X>0$ we have
\[\forall n<\log_{1/z_0}\frac aX,\qquad\int_0^XS_t^{(n)}g_a(x)\phi(x)\,\d x=0.\]
\end{lemma}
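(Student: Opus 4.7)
The plan is to track the support of $S_t^{(n)} g_a$ through the Dyson--Phillips iteration, using that transport pushes mass to the right and that $\cF_+$ shifts support down by at most a factor $z_0$.

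First I would analyze the transport semigroup $S_t$. Since $\cA_0 g = -(\tau g)' - \lambda g - B g$ is a first-order transport operator with strictly positive speed $\tau$ plus a killing term (and no source), the semigroup $S_t$ acts along the characteristics $\dot{x} = \tau(x)$. Concretely, if $\psi_s$ denotes the forward flow of $\tau$, then an explicit representation along characteristics (solving the ODE using the variation of constants formula, analogous to~\eqref{eq:ghmu2}) shows that whenever $f \in L^1_\phi$ is supported in $[b,\infty)$, the function $S_s f$ is supported in $[\psi_s(b),\infty) \subseteq [b,\infty)$.

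Next I would analyze how $\cF_+$ moves supports. For $f\in X$ with $\Supp f \subseteq [b,\infty)$, the integrand in $\cF_+ f(x) = \int_0^1 B(x/z) f(x/z)\,\wp(\d z)/z$ vanishes unless $x/z \geq b$ and $z \in \Supp \wp \subseteq [z_0,1)$. Together these force $x \geq b z_0$, so $\Supp(\cF_+ f) \subseteq [b z_0,\infty)$. Because the subspace of $L^1_\phi$ consisting of functions vanishing a.e.\ on $[0,bz_0)$ is closed, this property passes to the bounded extension of $\cF_+$ provided by Lemma~\ref{lm:F+def}.

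With these two ingredients in hand I would prove by induction on $n$ that
\[
\Supp\bigl(S_t^{(n)} g_a\bigr) \subseteq [a z_0^n,\infty) \qquad \text{for all } t\ge 0.
\]
The base case $n=0$ follows from Step~1 since $g_a$ is supported in $[a,a+1] \subseteq [a,\infty)$. For the inductive step, if $S_s^{(n-1)} g_a$ is supported in $[a z_0^{n-1},\infty)$ for every $s$, then Step~2 gives $\Supp(\cF_+ S_s^{(n-1)} g_a) \subseteq [a z_0^n,\infty)$, Step~1 gives $\Supp(S_{t-s}\cF_+ S_s^{(n-1)} g_a) \subseteq [a z_0^n,\infty)$, and integrating in $s$ preserves the support inclusion. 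The conclusion is then immediate: $n < \log_{1/z_0}(a/X)$ is equivalent to $a z_0^n > X$, and in that case $S_t^{(n)} g_a$ vanishes on $[0,X]$, so the integral is zero.

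The only non-routine point is Step~2 for the extended operator: one must check that the support-shift property, initially defined on the dense subspace $X$, survives the continuous extension. This is handled by the closedness argument above. Everything else is a direct consequence of the method of characteristics and of the Dyson--Phillips formula recalled in Section~\ref{sec:semigroups}.
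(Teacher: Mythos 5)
Your proof is correct and follows the same line of argument as the paper: induction on $n$ to show $\Supp(S_t^{(n)} g_a) \subseteq [a z_0^n, +\infty)$, using that the transport semigroup $S_t$ pushes mass to the right and that $\cF_+$ shifts supports down by at most a factor $z_0$. The only difference is that you make explicit the (routine) check that the support-shift property of $\cF_+$ survives the continuous extension from the dense subspace $X$, a point the paper leaves implicit.
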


\begin{proof}
We prove by induction on $n$ that $\Supp(S_t^{(n)}g_a)\subset[z_0^na,+\infty).$

For $n=0$ the results follows from the fact that $S_t$ is the semigroup of a transport equation with positive speed.

To go from $n$ to $n+1$ we check that if $g$ is a function such that $\Supp g\subset[A,+\infty)$ then $\Supp\cF_+ g\subset[z_0 A,+\infty).$
Indeed we have
\[\cF_+ g(x)=\int_{z_0}^1B\Bigl(\frac xz\Bigr)g\Bigl(\frac xz\Bigr)\frac{\wp(\d z)}{z}\]
and $\cF_+ g(x)=0$ when $\frac x{z_0}<A.$
\end{proof}

\begin{lemma}\label{lm:Dyson-Philipps}
For all $n\in\bN$ and all $t\geq0$ we have
\[\|S_t^{(n)}\|_{\cL(L^1_\phi)}\leq\frac{\|\cF_+\|^n\, t^n}{n!}.\]
\end{lemma}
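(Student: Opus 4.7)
The plan is a direct induction on $n$, which is the canonical bound for the Dyson--Phillips iterates.

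For the base case $n=0$, note that $S_t^{(0)}=S_t$ and by Proposition~\ref{prop:gen_St} the semigroup $(S_t)_{t\geq 0}$ is a contraction, so $\|S_t^{(0)}\|_{\cL(L^1_\phi)}\leq 1 = \frac{\|\cF_+\|^0 t^0}{0!}$, with the usual convention $0^0=1$.

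For the inductive step, assume the bound holds at rank $n-1$. Starting from the defining formula
\[
S_t^{(n)} = \int_0^t S_{t-s}\,\cF_+\, S_s^{(n-1)}\,\d s,
\]
I would take norms under the integral (the integral being understood in the strong operator sense, which is legitimate since $s\mapsto S_{t-s}\cF_+ S_s^{(n-1)}g$ is strongly continuous for every $g\in L^1_\phi$) and use submultiplicativity together with the contraction bound $\|S_{t-s}\|_{\cL(L^1_\phi)}\leq 1$ and the induction hypothesis:
\begin{align*}
\|S_t^{(n)}\|_{\cL(L^1_\phi)} &\leq \int_0^t \|S_{t-s}\|_{\cL(L^1_\phi)}\,\|\cF_+\|_{\cL(L^1_\phi)}\,\|S_s^{(n-1)}\|_{\cL(L^1_\phi)}\,\d s \\
&\leq \int_0^t \|\cF_+\|\cdot \frac{\|\cF_+\|^{n-1} s^{n-1}}{(n-1)!}\,\d s = \frac{\|\cF_+\|^n t^n}{n!},
\end{align*}
which closes the induction.

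There is no real obstacle here: the only point requiring a word of care is the passage of the norm inside the integral defining $S_t^{(n)}$, which is standard once one checks the strong continuity of the integrand (this follows from the strong continuity of $(S_t)_{t\geq 0}$ and the boundedness of $\cF_+$ established in Lemma~\ref{lm:F+def}). The estimate itself is the textbook Dyson--Phillips bound, and the fact that $(S_t)_{t\geq 0}$ is a contraction is exactly what makes the induction yield the clean factor $t^n/n!$ rather than an exponential weight.
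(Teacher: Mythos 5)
Your proof is correct and is essentially the paper's own argument: the paper simply states that the bound follows from the definition of $S_t^{(n)}$ by induction, using the boundedness of $\cF_+$ and the contraction property $\|S_t\|_{\cL(L^1_\phi)}\leq 1$, which is exactly what you carry out in detail.
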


\begin{proof}
It follows from the definition of $S^{(n)}_t$ by an induction on $n,$ using that $\cF_+$ is bounded and that $\|S_t\|_{\cL(L^1_\phi)}\leq1$ since $(S_t)_{t\geq0}$ is a contraction.
\end{proof}

We are now ready to prove Theorem~\ref{th:norm=2}.

\begin{proof}[Proof of Theorem~\ref{th:norm=2}]
First we have $\|T_t-P\|_{\cL(L^1_\phi)}\leq\| T_t\|_{\cL(L^1_\phi)}+\| P\|_{\cL(L^1_\phi)}\leq2$ because $(T_t)$ is a contraction and $P$ is a projection.

For the other inequality we consider the initial distribution $f^{\rm{in}}=g_a$ which satisfies $\|g_a\|_{L^1_\phi}=\langle g_a,\phi\rangle=1$ and we write for $X>0$ to be chosen later
\begin{align*}
\|T_t g_a-P g_a\|_{L^1_\phi}&=\int_0^X|T_tg_a(x)-G(x)|\phi(x)\,\d x
+\int_X^\infty|T_tg_a(x)-G(x)|\phi(x)\,\d x\\
&\geq\int_0^XG(x)\phi(x)\,\d x-\int_0^XT_tg_a(x)\phi(x)\,\d x\\
&\hspace{20mm}+\int_X^\infty T_tg_a(x)\phi(x)\,\d x-\int_X^\infty G(x)\phi(x)\,\d x.
\end{align*}
Let $\epsilon>0$ and $X>0$ be such that $\int_0^XG(x)\phi(x)\d x\geq1-\epsilon,$ and so $\int_X^\infty G(x)\phi(x)\d x\leq\epsilon.$
Consider $g_a(x)=\frac{1}{\phi(x)}\indic_{a\leq x\leq a+1}.$
Using Lemma~\ref{lm:supp_St} and Lemma~\ref{lm:Dyson-Philipps} we have
\[\int_0^XT_tg_a(x)\phi(x)\,\d x\leq R_t(a)\]
where
\[R_t(a):=\sum_{n\geq\log_{1/z_0}(\frac aX)}\frac{\|\cF_+\|^n t^n}{n!}\]
Since the series is absolutely convergent and $a\mapsto\log_{1/z_0}\frac aX$ converges to $+\infty$ when $a\to+\infty,$
we deduce that $\lim_{a\to+\infty}R_t(a)=0$ and we can find $a>X$ large enough so that $\int_0^XT_tg_a(x)\phi(x)\,dx\leq\epsilon.$
Since the conservation law~\eqref{eq:cons_law} ensures that $\langle g(t),\phi\rangle=\langle g_a,\phi\rangle=1,$ we have $\int_X^{+\infty}T_tg_a(x)\phi(x)\,dx\geq1-\epsilon$ and finally we get
\[\|T_tg_a-P g_a\|_{L^1_\phi}\geq 2-4\epsilon.\]
The proof is complete since $\epsilon>0$ is arbitrary and $\|g_a\|_{L^1_\phi}=1.$
\end{proof}

\

%
%
\section{The general case}\label{sec:z0=0}

In the previous section we have established Theorem~\ref{th:main} in the case of a fragmentation kernel which satisfies $z_0=\inf\Supp\wp>0$
by computing exactly the norm of the operator $T_t-P.$
This strategy cannot be applied when $z_0=0.$
To treat this case we use the notion of quasi-compactness.

\begin{definition}
Let $\left(U_t\right)_{t\geq0}$ be a $C_{0}$-semigroup on a Banach space $E.$
It is said to be quasi-compact if and only if there exists a compact operator $K$ and $t_{0}\geq0$ such that
$$
\left\| U_{t_{0}} -K\right\|_{\cL(E)}<1.
$$
\end{definition}

The projection $P$ being a compact operator (since it is rank one),
if we prove that $(T_t)$ is not quasi-compact then the result of Theorem~\ref{th:main} follows immediately.
To do so we start by truncating the fragmentation kernel in such a way that it satisfies $z_0>0.$
For this truncated problem we use Theorem~\ref{th:norm=2} to check that the semigroup is not quasi-compact.
Then we prove that this lack of quasi-compactness is preserved as the truncation parameter vanishes.
The main difficulty in this procedure is that the dual eigenfunction $\phi$ is modified when we truncate the fragmentation kernel.
Since this function is the weight of the $L^1$ space in which we work, we will need the following proposition.

\begin{proposition}\label{prop:invariance}
The notion of quasi-compactness is invariant under change of equivalent norm.
\end{proposition}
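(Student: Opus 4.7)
The plan is the following. Let $\|\cdot\|_1$ and $\|\cdot\|_2$ be two equivalent norms on the Banach space $E$, and fix constants $c, C > 0$ such that
\[c\,\|x\|_1 \leq \|x\|_2 \leq C\,\|x\|_1, \qquad \forall x\in E.\]
It follows that for every bounded operator $T$ on $E$ one has
\[\|T\|_{\cL(E,\|\cdot\|_2)} \leq \frac{C}{c}\,\|T\|_{\cL(E,\|\cdot\|_1)}.\]
I would also note, as a preliminary remark, that the space $\cK(E)$ of compact operators does not depend on the choice of equivalent norm, since relative compactness and boundedness of sets in $E$ are topological notions preserved under norm equivalence. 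In particular $\cK(E)$ is the same two-sided ideal of $\cL(E)$ in both settings.

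Assume $(U_t)_{t\geq0}$ is quasi-compact with respect to $\|\cdot\|_1$: there exist $t_0\geq0$ and a compact operator $K$ with
\[\|U_{t_0}-K\|_{\cL(E,\|\cdot\|_1)}=q<1.\]
The naive attempt, to use the same $t_0$ and $K$, fails because $\|U_{t_0}-K\|_{\cL(E,\|\cdot\|_2)}$ can be as large as $\frac{C}{c}q$, which may exceed $1$. The key idea is to \emph{iterate} so as to drive the $\|\cdot\|_1$-error below the cushion $c/C$. Write $A:=U_{t_0}-K$, so that $U_{t_0}=A+K$ with $\|A\|_{\cL(E,\|\cdot\|_1)}=q$. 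Since $U_{nt_0}=(A+K)^n$, expanding by distributivity gives
\[U_{nt_0}=A^n+K_n,\]
where $K_n$ is the sum of the $2^n-1$ remaining terms; each such term contains at least one factor $K$, hence is compact by the ideal property, and therefore $K_n$ is compact.

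The next step is to estimate: $\|U_{nt_0}-K_n\|_{\cL(E,\|\cdot\|_1)}=\|A^n\|_{\cL(E,\|\cdot\|_1)}\leq q^n$, and then by norm equivalence
\[\|U_{nt_0}-K_n\|_{\cL(E,\|\cdot\|_2)}\leq \frac{C}{c}\,q^n.\]
Choosing $n$ large enough so that $\frac{C}{c}\,q^n<1$ (possible because $q<1$) and setting $t_1:=nt_0$, $K':=K_n$, one obtains a compact operator $K'$ with $\|U_{t_1}-K'\|_{\cL(E,\|\cdot\|_2)}<1$, which is exactly quasi-compactness with respect to $\|\cdot\|_2$.

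The only subtle point is recognizing that the naive copy-paste does not work and that one must instead iterate to amplify the contraction. Once this iteration trick is in place, all remaining steps are essentially bookkeeping with the ideal structure of $\cK(E)$ and the norm-equivalence constants.
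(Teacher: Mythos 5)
Your proof is correct, and it takes a genuinely different route from the paper's. The paper simply cites the characterization in~\cite[Prop.\ V-3.5]{EN} that $(U_t)$ is quasi-compact if and only if $\Dist(U_t,\cK(E))\to 0$ as $t\to+\infty$; since the compact operators form a norm-independent set and the distances in two equivalent norms differ by a bounded factor, the invariance is immediate. You instead work directly from the definition and supply the missing iteration argument yourself: writing $U_{t_0}=A+K$ with $\|A\|_1=q<1$, expanding $U_{nt_0}=(A+K)^n=A^n+K_n$ with $K_n$ compact by the two-sided ideal property, and then letting $n$ grow to overcome the norm-equivalence constant $C/c$. This is exactly the mechanism that underlies the ``if and only if'' in Engel--Nagel's Prop.\ V-3.5 (that a distance $<1$ at one time forces the distance to decay geometrically along multiples of that time), so in effect you have reproved the one direction of that lemma that is needed, rather than quoting it. Your version is longer but self-contained and more elementary; the paper's is shorter at the cost of an external reference. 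One micro-remark for completeness: if $t_0=0$ the estimate $\|I-K\|_1<1$ would force $K$ to be invertible and compact, hence $\dim E<\infty$, in which case everything is trivially compact; so one may always assume $t_0>0$, and your conclusion produces a genuine positive time $t_1=nt_0$.
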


\begin{proof}
This is a direct consequence of~\cite[Proposition V-3.5]{EN}, which states that a semigroup $(U_t)$ is quasi-compact if and only if 
$$
\lim_{t\rightarrow +\infty}\inf\,\bigl\{ \left\| U_{t} - K\right\|_{\cL\left(E\right)} |\ K\ \mbox{compact} \bigr\}=0.
$$
\end{proof}

\subsection{The lack of quasi-compactness in the case $z_0>0$}

We use Theorem~C-IV-2.1, p.343  in~\cite{Nagel86} which is recalled here.

\begin{theorem}\label{th:qc_convergence}
Let $(U_t)_{t\geq0}$ be a positive $C_0$-semigroup on a Banach lattice $E$
which is bounded, quasi-compact and has spectral bound zero.
Then there exists a positive projection $Q$ of finite rank and suitable constants $M\geq1$ and $a>0$ such that
\[\|U_t-Q\|_{\mathcal L(E)}\leq M\e^{-at}\quad\text{for all}\ t\geq0.\]
\end{theorem}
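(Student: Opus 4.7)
The plan is to leverage quasi-compactness and positivity together to decompose $A$ spectrally into a finite-dimensional $0$-eigenspace and a complement on which the semigroup decays exponentially. First, I would translate quasi-compactness into the statement $\omega_{\mathrm{ess}}(A) < 0$: from $\|U_{t_0} - K\|_{\cL(E)} < 1$ with $K$ compact one deduces that the essential spectral radius of $U_{t_0}$ is strictly less than $1$. By a Browder-type argument, $\sigma(A) \cap \{\Re z > \omega_{\mathrm{ess}}(A)\}$ then consists of finitely many isolated eigenvalues of finite algebraic multiplicity. Since $s(A) = 0$, this finite set contains $0$ and meets $i\bR$ only in finitely many points.

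Second, I would reduce the peripheral spectrum to $\{0\}$ using positivity. The key tool is the cyclicity theorem of Greiner for positive $C_0$-semigroups on a Banach lattice: if $i\alpha \in \sigma(A) \cap i\bR$, then $in\alpha \in \sigma(A)$ for every $n \in \bZ$. Combined with the previous finiteness, this forces $\alpha = 0$. Boundedness of $(U_t)$ next forces $0$ to be a semisimple eigenvalue, since a pole of order $\geq 2$ in $R(\cdot,A)$ would yield a direction of polynomial growth. I then take $Q$ to be the Riesz spectral projection at $0$, which has finite rank equal to the algebraic multiplicity of $0$; its positivity follows from the Abel-mean representation $Qx = \lim_{\mu \to 0^+} \mu R(\mu, A) x$, combined with positivity of $R(\mu, A)$ for $\mu > 0$.

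Third, I would convert the spectral gap into an operator-norm decay on $\ker(Q)$. The $(U_t)$-invariant splitting $E = \mathrm{range}(Q) \oplus \ker(Q)$ restricts the semigroup to $(V_t)$ on $\ker(Q)$, whose generator has spectrum $\sigma(A) \setminus \{0\}$, which by isolation lies in $\{\Re z \leq -a\}$ for some $a \in (0, -\omega_{\mathrm{ess}}(A))$. Since $V_{t_0}$ differs from $U_{t_0}$ by a finite-rank operator, $(V_t)$ inherits quasi-compactness, so the spectral and growth bounds of its generator coincide, yielding $\|V_t\|_{\cL(\ker Q)} \leq M_0\, \e^{-at}$. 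Writing $U_t - Q = U_t(I - Q)$ and using $\|I - Q\| < \infty$, I would conclude $\|U_t - Q\|_{\cL(E)} \leq M\, \e^{-at}$ with $M = M_0 \|I - Q\| \geq 1$.

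I expect the main obstacle to be the cyclicity step, which rests on nontrivial Perron--Frobenius-type theory for positive $C_0$-semigroups on Banach lattices; the remaining ingredients are by now routine applications of Riesz spectral calculus and the identification of spectral and growth bounds for quasi-compact semigroups.
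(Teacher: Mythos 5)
The paper does not prove this theorem: it is quoted verbatim as Theorem~C-IV-2.1, p.~343 of~\cite{Nagel86}, and the cited book is where the proof lives. So there is no ``paper's own proof'' against which to measure you; the natural comparison is with the standard argument in the literature, and your outline is essentially that argument. The sequence of ingredients --- $r_{\ess}(U_{t_0})<1$ from the defining inequality, hence $\omega_{\ess}(A)<0$ and a decomposition of $\sigma(A)$ near the imaginary axis into isolated poles of finite rank; Greiner's cyclicity theorem for the boundary spectrum of a positive semigroup forcing $\sigma(A)\cap i\bR=\{0\}$; boundedness excluding a pole of order $\geq 2$ at $0$; positivity of the residue $Q=\lim_{\mu\to0^+}\mu R(\mu,A)$; and then the identity $\omega_0=\max(\omega_{\ess},s(A))$ applied to the restricted semigroup on $\ker Q$ --- is exactly how~\cite{Nagel86} (and Engel--Nagel in~\cite{EN}) establish this, so you have not taken a different route, you have reconstructed the intended one.

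Two small points of precision. First, $\sigma(A)\cap\{\Re z>\omega_{\ess}(A)\}$ need not be finite in general; what holds is that $\sigma(A)\cap\{\Re z\geq\omega_{\ess}(A)+\eps\}$ is finite for each $\eps>0$, and since $\omega_{\ess}(A)<0=s(A)$ this is all you need to isolate the boundary spectrum. Second, the cyclicity theorem you invoke requires $s(A)$ to be a pole of the resolvent; you should note explicitly that this is exactly what the first step provides, so the hypothesis is not free but is supplied by quasi-compactness. Neither remark is a gap --- your sketch is correct and matches the source the paper cites.
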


\begin{corollary}\label{cor:nonqc_z0>0}
If $\wp$ is such that $z_0>0,$ then the rescaled growth-fragmentation semigroup $(T_t)_{t\geq0}$ is not quasi-compact.
\end{corollary}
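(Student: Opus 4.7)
The plan is to argue by contradiction, combining Theorem~\ref{th:qc_convergence} with Theorem~\ref{th:norm=2} and the strong convergence result recalled in the introduction. Suppose that $(T_t)_{t\geq0}$ is quasi-compact. First I would verify that $(T_t)$ meets the hypotheses of Theorem~\ref{th:qc_convergence}: the space $L^1_\phi$ is a Banach lattice, the semigroup is positive (established in the corollary following Proposition~\ref{prop:gen_St}) and bounded (since it is a contraction), and its spectral bound $s(\cA)$ equals zero. Indeed, $\cA G = 0$ forces $0 \in \sigma(\cA)$, hence $s(\cA) \geq 0$; conversely, the contraction property yields the growth bound $\omega_0(\cA) \leq 0$, and since $s(\cA) \leq \omega_0(\cA)$ we obtain $s(\cA) \leq 0$.

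Applying Theorem~\ref{th:qc_convergence} then produces a positive projection $Q$ of finite rank together with constants $M \geq 1$ and $a > 0$ such that
\[
\|T_t - Q\|_{\cL(L^1_\phi)} \leq M\,\e^{-at} \quad \text{for all } t\geq 0.
\]
In particular $T_t$ would converge to $Q$ in operator norm as $t\to+\infty$, and therefore also strongly.

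The next step is to identify $Q$ with $P$. The General Relative Entropy result of Perthame \emph{et al.}\ quoted in the introduction applies under \textbf{(H$\tau$-H$B$-H$\wp$)} and gives the strong convergence $T_t g \to Pg$ in $L^1_\phi$ for every $g \in L^1_\phi$. By uniqueness of the strong limit, we must have $Q = P$. Feeding this back into the quasi-compact decay estimate would yield $\|T_t - P\|_{\cL(L^1_\phi)} \to 0$, in direct contradiction with Theorem~\ref{th:norm=2}, which asserts that $\|T_t - P\|_{\cL(L^1_\phi)} = 2$ for every $t \geq 0$. This contradiction closes the proof.

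The only subtle point I expect is the verification that $s(\cA) = 0$ rather than merely $s(\cA) \leq 0$; once that is in hand, the argument is essentially the three-way combination of the quasi-compact structure theorem, the already known strong convergence $T_t \to P$, and the lower bound provided by Theorem~\ref{th:norm=2}. No separate analytical estimate is needed beyond these three ingredients.
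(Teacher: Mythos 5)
Your proof is correct and follows the paper's overall strategy (contradiction via Theorem~\ref{th:qc_convergence} and Theorem~\ref{th:norm=2}), but you identify $Q=P$ by a genuinely different route. You invoke the strong convergence $T_tg\to Pg$ from the General Relative Entropy result of~\cite{MMP2}, cited in the introduction, and conclude $Q=P$ by uniqueness of strong limits. The paper instead proves $Q=P$ without appealing to~\cite{MMP2}: it uses the positivity of $Q$, the conservation law $\langle T_tg,\phi\rangle=\langle g,\phi\rangle$ passed to the limit, and the identity $T_tQ=Q$ (from the semigroup property passed to the limit $s\to\infty$) to show that $Qf^+$ is a positive fixed point with the correct $\phi$-mass; uniqueness of the Perron eigenfunction $G$ in Theorem~\ref{th:recall} then forces $Qf^+=Pf^+$, and likewise for $f^-$. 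Your shortcut is shorter and perfectly valid given that the paper already accepts the strong convergence of~\cite{MMP2}; the paper's argument has the modest advantage of deriving the identification $Q=P$ from the bare Perron uniqueness statement and the semigroup structure alone, so it would survive even if one wished to keep the proof independent of the entropy methods. Your verification of $s(\cA)=0$ via $\cA G=0$ and $s(\cA)\leq\omega_0(\cA)\leq0$ matches the paper's reasoning.
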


\begin{proof}
We refer to~\cite{Nagel86} for the definitions of a Banach lattice and the spectral bound of a semigroup.
The following properties are readily deduced from the definitions:

- $L^1_\phi$ is a Banach lattice;

- a contraction semigroup is bounded and has nonpositive spectral bound;

- a semigroup which admits a nonzero fixed point has a nonnegative spectral bound.

The semigroup $(T_t)_{t\geq0}$ is a contraction semigroup with a positive fixed point $G.$
So the only missing condition on $(T_t)_{t\geq0}$ to apply Theorem~\ref{th:qc_convergence} is the quasi-compactness.
We use this to prove by contradiction that $(T_t)_{t\geq0}$ cannot be quasi-compact.

Assume that $(T_t)_{t\geq0}$ is quasi-compact.
By Theorem~\ref{th:qc_convergence} we deduce that it converges uniformly to a positive projection $Q.$
If we can prove that $Q=P$ then we get a contradiction with Theorem~\ref{th:norm=2} and the result follows.
The end of the proof consists in proving this identity.

Let $f\in L^1_\phi$ and consider its positive part $f^+.$
Since the projection $Q$ is positive we have that $Qf^+$ is positive.
The conservation law~\eqref{eq:cons_law} which states that $\langle T_tf^+,\phi\rangle=\langle f^+,\phi\rangle$
implies by passing to the limit $t\to+\infty$ that $\langle Qf^+,\phi\rangle=\langle f^+,\phi\rangle,$ where $\langle\cdot,\cdot\rangle$ is defined in 
$(\ref{def:crochets})$.
From the semigroup property $T_{t+s}=T_tT_s$ we get by passing to the limit $s\to+\infty$ that $Q=T_tQ.$
Applying this identity to $f^+$ we get $T_tQf^+=Qf^+.$
We have proved that $Qf^+$ is a positive fixed point of $(T_t)$ which satisfies $\langle Qf^+,\phi\rangle=\langle f^+,\phi\rangle.$
These properties together with the uniqueness of the Perron eigenfunction $G$ in Theorem~\ref{th:recall} imply that  $Qf^+=\langle f^+,\phi\rangle G=Pf^+.$
Similarly we have $Qf^-=Pf^-$ and then $Qf=Pf.$
This means that $Q=P$ and the proof is complete.
\end{proof}

\subsection{An accurate truncation}

The strategy for a kernel such that $z_0=0$ consists in truncating it by defining, for $\ep\leq\ep_0$ where $\ep_0>0$ is small enough so that $\int_{\ep_0}^1z\wp(dz)>0,$
\[\wp^\ep:=\frac1{\int_\ep^1z\wp(\d z)}\indic_{[\ep,1]}\wp.\]
This new kernel satisfies $z_0^\ep=\inf\Supp\wp^\ep\geq\ep>0,$ $\wp_1^\ep=\int_0^1z\wp^\ep(dz)=1$ and converges to $\wp$ when $\ep\to0.$
We denote by $\lambda_\ep$ and $\phi_\ep$ the Perron eigenvalue and the adjoint eigenfunction corresponding to $\wp^\ep.$
Corollary~\ref{cor:nonqc_z0>0} applies and ensures that the semigroup associated to $\wp^\ep$ is not quasi-compact in $L^1_{\phi_\ep}.$
But using Theorem~\ref{th:phi_estimates} we see that there is no reason for $\phi_\ep$ to be comparable $\phi$ at infinity,
so the corresponding weighted $L^1$-norms are not equivalent and $L^1_\phi\neq L^1_{\phi_\ep}.$
To overcome this problem we modify $B(x)$ by defining
\[B_{\eta,A}:=B+\eta\indic_{[A,+\infty)}\]
for $A\geq A_0$ and $\eta>-B_\infty,$ so that $B_{\eta,A}(x)>0$ for $x>0.$
We denote by $\lambda_{\ep,\eta,A}$ the Perron eigenvalue associated to the fragmentation coefficients $\wp^\ep$ and $B_{\eta,A}.$
The idea is to choose accurately $A$ and $\eta$ as functions of $\ep$ in such a way that the associated dual eigenfunction, denoted afterwards by $\hat\phi_\ep,$ is comparable to $\phi.$
We start with two useful lemma.

\begin{lemma}\label{lm:continuity_lambda}
The function $(\ep,\eta,A)\mapsto\lambda_{\ep,\eta,A}$ is continuous on $[0,\ep_0]\times(-B_\infty,+\infty)\times[A_0,+\infty).$
Additionally we have for any $(\ep,\eta)\in[0,1)\times(-B_\infty,+\infty),$
\be\label{eq:lim_lambda_A_infty}\lim_{A\to+\infty}\lambda_{\ep,\eta,A}=\lambda_\ep.\ee
\end{lemma}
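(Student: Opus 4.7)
The key identity comes from a standard duality computation. Multiplying the direct eigenvalue equation for $(\lambda_{\ep,\eta,A}, G_{\ep,\eta,A})$, associated with $(\wp^\ep, B_{\eta,A})$, by $\phi_\ep$ and integrating over $\bR_+$, and the dual eigenvalue equation for $\phi_\ep$, associated with $(\wp^\ep, B)$, by $G_{\ep,\eta,A}$ and integrating, and subtracting, the transport and fragmentation-gain terms cancel after the change of variables $y=x/z$. Only the contribution of $B_{\eta,A}-B=\eta\indic_{[A,+\infty)}$ survives, yielding
\[(\lambda_{\ep,\eta,A}-\lambda_\ep)\la G_{\ep,\eta,A},\phi_\ep\ra=\eta\int_A^\infty G_{\ep,\eta,A}(y)\biggl[\int_0^1\phi_\ep(zy)\,\wp^\ep(\d z)-\phi_\ep(y)\biggr]\d y.\]

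To prove the limit \eqref{eq:lim_lambda_A_infty}, I would control the bracket using Theorem~\ref{th:phi_estimates}, which gives $\phi_\ep(x)\le C_\ep(1+x^{k_\ep})$ with $k_\ep<1$; the integrand is then bounded in absolute value by $C'_\ep\, G_{\ep,\eta,A}(y)(1+y^{k_\ep})$. Combined with the super-polynomial decay of $G_{\ep,\eta,A}$ at infinity (adapted from~\cite[Theorem 1.1]{DG10}, taken uniformly in $A$ for fixed $\ep,\eta$), the right-hand side is $o_A(1)$ as $A\to+\infty$. Since $\la G_{\ep,\eta,A},\phi_\ep\ra$ stays bounded below by a positive constant ($G_{\ep,\eta,A}$ has unit $L^1$-mass and $\phi_\ep$ is bounded below on any compact set by its positivity), we conclude $\lambda_{\ep,\eta,A}\to\lambda_\ep$.

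For continuity on the parameter domain, I would run a standard compactness/uniqueness scheme. Given a sequence $(\ep_n,\eta_n,A_n)\to(\ep^*,\eta^*,A^*)$, a variant of the identity above (now pairing the triple at $(\ep_n,\eta_n,A_n)$ with the dual at $(\ep^*,\eta^*,A^*)$) bounds $\lambda_n:=\lambda_{\ep_n,\eta_n,A_n}$ uniformly. The estimates underlying Theorems~\ref{th:truncated} and~\ref{th:phi_estimates}, taken uniformly in the parameters, then give compactness of $(G_n)$ and $(\phi_n)$ in suitable topologies. Any subsequential limit $(\lambda^\infty, G^\infty, \phi^\infty)$ is a Perron eigentriple for the limit parameters, and uniqueness (Theorem~\ref{th:recall}) forces $\lambda^\infty=\lambda_{\ep^*,\eta^*,A^*}$; since every convergent subsequence has the same limit, the whole sequence converges.

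The principal obstacle is securing the uniform-in-$A$ super-polynomial decay of $G_{\ep,\eta,A}$ needed in the limit step. The decay proof in~\cite{DG10} is tailored to a fixed equation, so it must be revisited to ensure stability under the bounded perturbation $B\to B+\eta\indic_{[A,+\infty)}$ as $A\to+\infty$. Analogous care is required for continuity near $\ep=0$, where $\wp^\ep\to\wp$ weakly and the exponent $k_\ep$ of Theorem~\ref{th:phi_estimates} governing the growth of $\phi_\ep$ must itself be shown to depend continuously on the parameters.
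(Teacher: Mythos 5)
Your derivation of the duality identity is correct, and for the limit~\eqref{eq:lim_lambda_A_infty} it follows a genuinely different and more quantitative route than the paper. The paper treats both the continuity and the limit in a single stroke by a compactness--uniqueness scheme: uniform estimates on the eigenelements (adapted from~\cite{DG10}) yield compactness of the families $(\lambda_{\ep,\eta,A})$, $(G_{\ep,\eta,A})$, $(\phi_{\ep,\eta,A})$, and uniqueness of the Perron triple identifies every subsequential limit. By pairing the direct equation for $(\lambda_{\ep,\eta,A},G_{\ep,\eta,A})$ against $\phi_\ep$ and the dual equation for $(\lambda_\ep,\phi_\ep)$ against $G_{\ep,\eta,A}$, you instead obtain an explicit formula isolating the contribution of the perturbation $B_{\eta,A}-B=\eta\indic_{[A,+\infty)}$; this lays bare the mechanism, namely that only the tail $\int_A^\infty$ contributes and the super-polynomial decay of $G_{\ep,\eta,A}$ dominates the at-most-polynomial growth of $\phi_\ep$. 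For the continuity you revert to the paper's compactness--uniqueness argument, which is the right call, and you correctly identify the shared technical cost: the bounds of~\cite{DG10} must be re-established uniformly in $A$ (and continuously in $\ep,\eta$), which the paper also asserts without detail.

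One small gap in the limit step: the claim that $\la G_{\ep,\eta,A},\phi_\ep\ra$ is bounded below ``by unit mass and positivity of $\phi_\ep$ on compacts'' is not sufficient when $k_\ep<0$, since then $\phi_\ep$ decays at infinity and mass of $G_{\ep,\eta,A}$ escaping to infinity could make the pairing degenerate. However, the uniform-in-$A$ tail control on $G_{\ep,\eta,A}$ that you already invoke to handle the right-hand side also prevents this escape of mass, so the fix is bookkeeping, not a new idea. Overall the proposal is sound and constitutes an acceptable, arguably more informative, alternative to the paper's brief proof.
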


\begin{proof}
This result follows from uniform estimates on $\lambda_{\ep,\eta,A}$ and the associated eigenfunctions $G_{\ep,\eta,A}$ and $\phi_{\ep,\eta,A}$ as obtained in~\cite{DG10}.
It allows to prove compactness of the families which, combined to the uniqueness of the eigenelements, provides the continuity with respect to parameters and the limit~\eqref{eq:lim_lambda_A_infty}.
\end{proof}

\begin{lemma}\label{lm:bound_lambda}
For any $(\ep,\eta,A)\in[0,\ep_0]\times(-B_\infty,+\infty)\times[A_0,+\infty)$ we have
\[0<\lambda_{\ep,\eta,A}\leq \|B\|_\infty +\eta.\]
\end{lemma}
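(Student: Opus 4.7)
The plan is to reduce both bounds to a single integral identity obtained by testing the direct Perron equation against the constant function $1$. Concretely, I will integrate the direct eigenvalue equation for the modified coefficients,
\[(\tau G_{\ep,\eta,A})' + \lambda_{\ep,\eta,A}\,G_{\ep,\eta,A} + B_{\eta,A}\,G_{\ep,\eta,A} = \cF_+^{\wp^\ep}\, G_{\ep,\eta,A},\]
over $\bR_+^*$. The flux boundary condition $(\tau G)(0)=0$ together with the integrability of $\tau G_{\ep,\eta,A}$ at infinity (which follows from the uniform tail estimates of~\cite{DG10}) eliminates the transport term. An exchange of integration order in the fragmentation term combined with the substitution $y=x/z$ yields $\int\cF_+^{\wp^\ep} G_{\ep,\eta,A}=\wp_0^\ep\int B_{\eta,A}G_{\ep,\eta,A}$, and since $\int G_{\ep,\eta,A}=1$, this produces the key identity
\[\lambda_{\ep,\eta,A} = (\wp_0^\ep - 1)\int_0^\infty B_{\eta,A}(x)\,G_{\ep,\eta,A}(x)\,\d x.\]

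From this identity the strict positivity is immediate. The factor $\wp_0^\ep - 1$ is strictly positive because $r\mapsto \wp_r^\ep$ is strictly decreasing on $(\underline r,+\infty)$ (since $\Supp\wp^\ep\subset(0,1)$) while the normalisation of the truncated kernel imposes $\wp_1^\ep = 1$. The integral factor is strictly positive because $B_{\eta,A}\equiv B_\infty+\eta>0$ on $[A,+\infty)$ (using the constraint $\eta>-B_\infty$) and $G_{\ep,\eta,A}$ is a strictly positive $L^1$-probability density on $\bR_+^*$.

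For the upper bound I will combine the identity with the pointwise control $B_{\eta,A}(x)\leq\|B\|_\infty+\eta^+$ and the normalisation $\int G_{\ep,\eta,A}=1$. The delicate point is to absorb the prefactor $\wp_0^\ep-1$ into the linear expression $\|B\|_\infty+\eta$: a crude estimate of the integral leaves an extra multiplicative factor $\wp_0^\ep$ on the right-hand side which must be removed via the mass-conservation constraint $\wp_1^\ep=1$. I expect the cleanest route to be either a supersolution argument for the dual eigenproblem using the ansatz $\Phi\equiv 1$, where a maximum principle of the type stated in Lemma~\ref{lm:max_principle} produces an inequality of the form $\lambda_{\ep,\eta,A}\leq\|B_{\eta,A}\|_\infty(\wp_0^\ep-1)$, or a direct manipulation exploiting the explicit formula $\wp^\ep=\indic_{[\ep,1]}\wp/\int_\ep^1 z\,\wp(\d z)$ to rewrite the factor $(\wp_0^\ep-1)$ as an integral against the normalising denominator. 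The main obstacle is precisely this absorption step; once it is carried out, the stated linear bound in $\|B\|_\infty+\eta$ follows, and the lemma is complete.
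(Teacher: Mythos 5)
Your computation is the right one, and it reveals a problem with the lemma as stated. Integrating the direct Perron equation $(\tau G_{\ep,\eta,A})'+\lambda_{\ep,\eta,A}G_{\ep,\eta,A}+B_{\eta,A}G_{\ep,\eta,A}=\cF_+^{\wp^\ep}G_{\ep,\eta,A}$ over $\bR_+$ and using $\int G_{\ep,\eta,A}=1$ together with $\int\cF_+^{\wp^\ep}G_{\ep,\eta,A}=\wp_0^\ep\int B_{\eta,A}G_{\ep,\eta,A}$ does give
\[\lambda_{\ep,\eta,A}=(\wp_0^\ep-1)\int_0^\infty B_{\eta,A}(x)\,G_{\ep,\eta,A}(x)\,\d x,\]
and your derivation of strict positivity from $\wp_0^\ep>\wp_1^\ep=1$ is exactly right. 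The paper's proof takes the same route (integrate the $G$-equation against $1$), but writes $\lambda_{\ep,\eta,A}=\int_0^\infty(B+\eta\indic_{x\geq A})G_{\ep,\eta,A}\,\d x$, i.e.\ it silently drops the factor $\wp_0^\ep-1$; your version is the correct one.

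The ``absorption step'' you flag is therefore not a technicality waiting for a trick: neither of your two proposed routes (the constant supersolution $\Phi\equiv1$ for the dual problem, or an algebraic rearrangement using $\wp_1^\ep=1$) can remove the factor, because the stated inequality is simply not true for general kernels. Take $\tau\equiv1$, $B\equiv1$ (so $\|B\|_\infty=B_\infty=1$) and $\wp(\d z)=(\nu+2)z^\nu\,\d z$ with $\nu\in(-1,0)$, which satisfies {\bf(H$\tau$-H$B$-H$\wp$)}. Then for $\ep=0$, $\eta=0$ the identity above gives $\lambda=\wp_0-1=\tfrac1{\nu+1}$, which exceeds $\|B\|_\infty+\eta=1$ by an arbitrarily large margin as $\nu\to-1^+$. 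What one actually obtains is $\lambda_{\ep,\eta,A}\leq(\wp_0^\ep-1)\bigl(\|B\|_\infty+\eta^+\bigr)$, and the lemma should carry the prefactor $\wp_0^\ep-1$ (which happens to equal $1$ only when $\wp_0=2$, e.g.\ for mitosis). This does not endanger the rest of the paper: in the proof of Proposition~\ref{prop:eta_Aeta} the lemma is only used to get a uniform-in-$\eta$ control on $\lambda_{\ep,\eta,A_0}$ and a bound on $\lambda_\ep$, and both survive with the extra factor since $\wp_0^\ep-1$ is uniformly bounded for $\ep\in[0,\ep_0]$. In short, your approach coincides with the paper's, your intermediate identity is correct, and the remaining gap in your argument is in fact a defect of the statement, not of your reasoning.
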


\begin{proof}
Integrating the equation satisfied by $(\lambda_{\ep,\eta,A},G_{\ep,\eta,A})$ on $\bR^+$ we get
\[0<\lambda_{\ep,\eta,A}=\int_0^\infty(B(x)+\eta\indic_{x\geq A})G_{\ep,\eta,A}(x)\,\d x\leq \|B\|_\infty +\eta.\]
\end{proof}

Now we are ready to prove the following proposition, which leads to the definitions of $\hat\lambda_\ep,$ $\hat\phi_\ep$ and $(\hat T_t^\ep)_{t\geq0}.$

\begin{proposition}\label{prop:eta_Aeta}
For any $k>\underline r$ and any $\ep\in(0,\ep_0)$ there exist $\eta>-B_\infty$ and $A_\eta>A_0$ such that
\[\wp_k^\ep=1+\frac{\lambda_{\ep,\eta,A_\eta}}{B_\infty+\eta},\]
and $|\eta|\to0$ when $\ep\to0.$
\end{proposition}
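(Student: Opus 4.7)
The plan is to treat the desired identity
\[(B_\infty + \eta)(\wp_k^\ep - 1) = \lambda_{\ep,\eta,A_\eta}\]
as a perturbation of the case $\ep = 0$, $\eta = 0$, in which it reduces to the defining relation~\eqref{eq:defk} for $k$ and holds for \emph{every} $A \geq A_0$. The asymmetry between this single equation and the two free parameters $\eta, A_\eta$ will be exploited: I would first produce an approximate $\eta$ from a formal $A = +\infty$ version, then close the actual equation by an intermediate value argument in $A$.

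First, the continuity ingredients. Dominated convergence (valid because $k > \underline r$) gives $\ep \mapsto \wp_k^\ep$ continuous on $[0,\ep_0]$ with $\wp_k^0 = \wp_k$, while Lemma~\ref{lm:continuity_lambda} gives joint continuity of $(\ep,\eta,A) \mapsto \lambda_{\ep,\eta,A}$ together with $\lim_{A \to +\infty}\lambda_{\ep,\eta,A} = \lambda_\ep$. Since $k < 1$ and $\wp_1^\ep = 1$, strict decay of $r \mapsto \wp_r^\ep$ gives $\wp_k^\ep > 1$, so the formal version of the equation at $A = +\infty$, namely $(B_\infty + \eta)(\wp_k^\ep - 1) = \lambda_\ep$, is affine in $\eta$ with positive slope and has the unique solution
\[\eta_\ep^\star := \frac{\lambda_\ep}{\wp_k^\ep - 1} - B_\infty,\]
which tends to $\lambda/(\wp_k - 1) - B_\infty = 0$ as $\ep \to 0$ by~\eqref{eq:defk}.

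Next, define
\[\Phi_\ep(\eta,A) := (B_\infty + \eta)(\wp_k^\ep - 1) - \lambda_{\ep,\eta,A},\]
continuous in $A \in [A_0,+\infty)$ with limit $\Phi_\ep(\eta,+\infty) = (\eta - \eta_\ep^\star)(\wp_k^\ep - 1)$, which has the sign of $\eta - \eta_\ep^\star$. It suffices to find $\eta$ close to $\eta_\ep^\star$ at which $\Phi_\ep(\eta,A_0)$ has the opposite sign, since IVT in $A$ will then produce the required $A_\eta \in (A_0,+\infty)$. The opposite sign comes from monotonicity of the Perron eigenvalue in $B$: adding the positive perturbation $\eta\indic_{[A_0,+\infty)}$ strictly raises $\lambda$, so $\lambda_{\ep,\eta,A_0} > \lambda_\ep$ for $\eta > 0$ and $\lambda_{\ep,\eta,A_0} < \lambda_\ep$ for $\eta < 0$. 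Splitting by the sign of $\eta_\ep^\star$: if $\eta_\ep^\star > 0$, take $\eta = \eta_\ep^\star + \delta$ with $\delta > 0$ small; then $\Phi_\ep(\eta,+\infty) = \delta(\wp_k^\ep - 1) > 0$ while $\Phi_\ep(\eta,A_0) = \lambda_\ep + \delta(\wp_k^\ep - 1) - \lambda_{\ep,\eta,A_0}$ is strictly negative for $\delta$ small enough, because $\lambda_{\ep,\eta_\ep^\star,A_0} - \lambda_\ep$ is a fixed positive quantity. The case $\eta_\ep^\star < 0$ is symmetric; $\eta_\ep^\star = 0$ is trivial with $\eta = 0$ and any $A$. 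In every case $|\eta|$ can be chosen at most $2|\eta_\ep^\star|$, hence $|\eta| \to 0$ as $\ep \to 0$.

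The main technical step is the strict monotonicity of $\eta \mapsto \lambda_{\ep,\eta,A_0}$ near $\eta = 0$; this is a standard consequence of the comparison principle for the positive irreducible semigroup generated by the rescaled growth-fragmentation operator, or equivalently of the classical first-order perturbation identity obtained by differentiating the direct eigenproblem against $\phi_{\ep,\eta,A_0}$ and using positivity of $G_{\ep,\eta,A_0}\phi_{\ep,\eta,A_0}$ on $[A_0,+\infty)$. Uniformity of the strict inequality in $\ep$ near $0$ is supplied by the continuity statement in Lemma~\ref{lm:continuity_lambda}.
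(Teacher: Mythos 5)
Your route is genuinely different from the paper's, and the overall shape (treat the one identity in two free parameters, pick an approximate $\eta$ from the $A=+\infty$ limit, then close in $A$ by IVT) is a reasonable idea. The paper sidesteps the sign question entirely: it introduces the auxiliary quantity $\tilde\lambda_{\ep,\eta}:=\lambda_\ep+\frac{\ep}{1+|\eta|}(\lambda_{\ep,\eta,A_0}-\lambda_\ep)$, which lies strictly between $\lambda_\ep$ and $\lambda_{\ep,\eta,A_0}$ \emph{whatever their order}, finds $A_\eta$ realizing $\lambda_{\ep,\eta,A_\eta}=\tilde\lambda_{\ep,\eta}$ by the intermediate value theorem, and then solves $\tilde\lambda_{\ep,\eta}=(\wp_k^\ep-1)(B_\infty+\eta)$ in $\eta$ using only the uniform bounds $(1-\ep)\lambda_\ep\leq\tilde\lambda_{\ep,\eta}\leq\lambda_\ep+\ep\max(1,\|B\|_\infty)$. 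No monotonicity in $\eta$ is ever needed.

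The gap in your argument is precisely the monotonicity claim you flag as ``standard.'' The first-order perturbation identity you invoke does \emph{not} reduce to positivity of $G_{\ep,\eta,A_0}\phi_{\ep,\eta,A_0}$. The $\eta$-derivative of the generator acts as
\[
(\partial_\eta\cA)g(x)=\int_0^1\indic_{\{x/z\geq A_0\}}\,g\Bigl(\tfrac xz\Bigr)\frac{\wp(\d z)}{z}-\indic_{\{x\geq A_0\}}\,g(x),
\]
a difference of two positive operators, and the perturbation formula gives
\[
\frac{\partial\lambda_{\ep,\eta,A_0}}{\partial\eta}
=\frac{1}{\langle G_{\ep,\eta,A_0},\phi_{\ep,\eta,A_0}\rangle}
\int_{A_0}^\infty G_{\ep,\eta,A_0}(y)\Bigl[\int_0^1\phi_{\ep,\eta,A_0}(zy)\,\wp^\ep(\d z)-\phi_{\ep,\eta,A_0}(y)\Bigr]\,\d y.
\]
The sign of this quantity hinges on whether the dual eigenfunction dominates or is dominated by its fragment average on $[A_0,+\infty)$. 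By the dual eigenequation this bracket equals $(\lambda\phi-\tau\phi')/B_\infty$, whose sign is not controlled by the two-sided power bound of Theorem~\ref{th:phi_estimates} alone (the constant $C$ appears on both sides and does not cancel), and the inequality $\tau\phi'\leq(\lambda+B)\phi$ coming from the monotonicity of $\phi\,\e^{-\Lambda}$ goes the wrong way. So ``strictly raises $\lambda$'' is exactly the kind of fine pointwise information about $\phi$ on $[A_0,+\infty)$ that the paper's construction is designed to avoid. Without it, your case analysis breaks down when $\lambda_{\ep,\eta_\ep^\star,A_0}=\lambda_\ep$ (so that both $\Phi_\ep(\eta,A_0)$ and $\Phi_\ep(\eta,+\infty)$ are $O(\delta)$ and no sign change is guaranteed), and the bound $|\eta|\leq 2|\eta_\ep^\star|$ is no longer forced. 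If you want to keep your structure, you would need to actually prove the strict monotonicity (for instance by establishing the pointwise inequality $\cF_+^*\phi>B\phi$ on $[A_0,+\infty)$, which is nontrivial), or else replace it by a sign-agnostic device like the paper's $\tilde\lambda_{\ep,\eta}$.
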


\begin{definition}
For any $\ep\in(0,\ep_0)$ we denote by $\hat\lambda_\ep,$ $\hat\phi_\ep$ and $(\hat T_t^\ep)_{t\geq0}$ respectively the Perron eigenvalue, the dual eigenfunction and the semigroup
corresponding to $\wp^\ep$ and $B_{\eta,A}$ with the choice of $\eta$ and $A=A_\eta$ in Proposition~\ref{prop:eta_Aeta}.
\end{definition}

\begin{proof}[Proof of Proposition~\ref{prop:eta_Aeta}]
Fix $\ep\in(0,\ep_0)$ and define
\[\wp_k^\ep=\int_0^1z^k\wp^\ep(\d z)=\frac{\int_\ep^1z^k\wp(\d z)}{\int_\ep^1z\wp(\d z)}>1.\]
We want to find $\eta>-B_\infty$ (small when $\ep$ is small) and $A>A_0$ (large) such that
\be\label{eq:lb_eta}
\lambda_{\ep,\eta,A}=(\wp_k^\ep-1)(B_\infty+\eta).
\ee
The right hand side does not depend on $A$ and is an affine function of $\eta.$
It tends to $0$ when $\eta\to-B_\infty$ and to $+\infty$ when $\eta\to+\infty.$ 
The strategy consists in proving that the left hand side is a continuous bounded function of $\eta$ provided we choose $A$ as a suitable function of $\eta.$
For any $\eta>-B_\infty$ we define the auxiliary number
\[\tilde\lambda_{\ep,\eta}:=\lambda_\ep+\frac{\ep}{1+|\eta|}(\lambda_{\ep,\eta,A_0}-\lambda_{\ep}).\]
It is easy to see that
\[\min(\lambda_\ep,\lambda_{\ep,\eta,A_0})<\tilde\lambda_{\ep,\eta}<\max(\lambda_\ep,\lambda_{\ep,\eta,A_0}).\]
From Lemma~\ref{lm:continuity_lambda} we know that the mapping $A\in(A_0,+\infty)\mapsto\lambda_{\ep,\eta,A}$ varies continuously from $\lambda_{\ep,\eta,A_0}$ to $\lambda_\ep.$
We deduce that there exists $A_\eta>A_0$ such that
\[\lambda_{\ep,\eta,A_\eta}=\tilde\lambda_{\ep,\eta}.\]
We get~\eqref{eq:lb_eta} with $A=A_\eta$ if we find $\eta$ such that
\[\tilde\lambda_{\ep,\eta}=(\wp_k^\ep-1)(B_\infty+\eta).\]
From Lemma~\ref{lm:continuity_lambda} we know that $\eta\mapsto\tilde\lambda_{\ep,\eta}$ is continuous.
We have the lower bound
\[\tilde\lambda_{\ep,\eta}\geq(1-\ep)\lambda_\ep>0\]
and using Lemma~\ref{lm:bound_lambda} we get the upper bound
\[\tilde\lambda_{\ep,\eta}\leq\lambda_\ep+\frac{\ep}{1+|\eta|}\lambda_{\ep,\eta,A_0}\leq\lambda_\ep+\ep\frac{\|B\|_\infty +\eta}{1+|\eta|}\leq\lambda_\ep+\ep\max(1,\|B\|_\infty ).\]
So necessarily the graph of $\eta\mapsto\tilde\lambda_{\ep,\eta}$ (when $\eta$ varies from $-B_\infty$ to $+\infty$) crosses the affine graph of $\eta\mapsto(\wp_k^\ep-1)(B_\infty+\eta).$
We choose $\eta$ which realizes such an intersection.

\medskip

The last step for completing the proof is to check that the $\eta$ that we have defined is small when $\ep$ is small.
The result of Lemma~\ref{lm:bound_lambda} for $\eta=0$ gives $\lambda_\ep\leq \|B\|_\infty .$
Using this estimate and the bounds on $\tilde\lambda_{\ep,\eta}$ above we get
\[-\ep \|B\|_\infty \leq\tilde\lambda_{\ep,\eta}-\lambda_\ep\leq\ep\max(\|B\|_\infty ,1).\]
This ensures that $\tilde\lambda_{\ep,\eta}\to\lambda$ when $\ep\to0$ and, since $\wp_k^\ep\to\wp_k,$
\[\eta=\frac{\wp_k^\ep-1}{\tilde\lambda_{\ep,\eta}}-B_\infty=\frac{\wp_k^\ep-1}{\tilde\lambda_{\ep,\eta}}-\frac{\wp_k-1}{\lambda}\to0.\]

\end{proof}

Proposition~\ref{prop:eta_Aeta} leads to the following corollaries.

\begin{corollary}\label{co:uniform_bound_phi_ep}
For any $\ep\in(0,\ep_0),$ there exist positive constants $c_\ep$ and $C_\ep$ such that
\[c_\ep\hat\phi_\ep\leq\phi\leq C_\ep\hat\phi_\ep.\]
As a consequence, $L^1_\phi=L^1_{\hat\phi_\ep}$ and the natural norms of these two spaces are equivalent.
\end{corollary}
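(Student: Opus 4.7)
The plan is to deduce the pointwise comparison by applying Theorem~\ref{th:phi_estimates} twice, once to $\phi$ and once to $\hat\phi_\ep$, and to observe that the careful choice of $\eta$ and $A_\eta$ in Proposition~\ref{prop:eta_Aeta} forces the two dual eigenfunctions to have the same tail exponent $k$.

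First I would verify that the modified triple $(\tau, B_{\eta,A_\eta}, \wp^\ep)$ still satisfies the hypotheses \textbf{(H$\tau$-H$B$-H$\wp$)} so that Theorem~\ref{th:phi_estimates} is applicable. Hypothesis \textbf{(H$\tau$)} is unchanged. For \textbf{(H$B$)}, since $\eta>-B_\infty$ the function $B_{\eta,A_\eta}=B+\eta\indic_{[A_\eta,+\infty)}$ is nonnegative, essentially bounded, identically equal to $\hat B_\infty:=B_\infty+\eta>0$ on $[A_\eta,+\infty)$, and (because $A_\eta\geq A_0$ and $B\equiv B_\infty$ on $[A_0,+\infty)$) its support coincides with $\Supp B$, hence is connected. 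For \textbf{(H$\wp$)}, the measure $\wp^\ep$ preserves mass by its very definition, and since it is supported in $[\ep,1]$ one has $\underline r_{\wp^\ep}=-\infty$ and $\wp^\ep_r\to+\infty$ as $r\to-\infty$, so~\eqref{as:wpr_lim} is trivially verified.

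Applying Theorem~\ref{th:phi_estimates} to the problem with coefficients $(\tau,B_{\eta,A_\eta},\wp^\ep)$ then provides a constant $\hat C_\ep>0$ such that
\[
\tfrac{1}{\hat C_\ep}\bigl(1+x^{k_\ep}\bigr)\leq \hat\phi_\ep(x)\leq \hat C_\ep\bigl(1+x^{k_\ep}\bigr),
\]
where $k_\ep<1$ is uniquely defined by $\wp^\ep_{k_\ep}=1+\hat\lambda_\ep/\hat B_\infty$. The crucial observation is now that, by Proposition~\ref{prop:eta_Aeta}, the exponent $k$ appearing in Theorem~\ref{th:phi_estimates} for $\phi$ satisfies
\[
\wp^\ep_k=1+\frac{\lambda_{\ep,\eta,A_\eta}}{B_\infty+\eta}=1+\frac{\hat\lambda_\ep}{\hat B_\infty}.
\]
By the strict monotonicity of $r\mapsto\wp^\ep_r$ on $(-\infty,+\infty)$, this forces $k_\ep=k$. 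Combining the resulting two-sided estimate on $\hat\phi_\ep$ with the analogous estimate on $\phi$ gives
\[
c_\ep\,\hat\phi_\ep(x)\leq \phi(x)\leq C_\ep\,\hat\phi_\ep(x),\qquad x\geq 0,
\]
with $c_\ep=1/(C\hat C_\ep)$ and $C_\ep=C\hat C_\ep$, where $C$ is the constant of Theorem~\ref{th:phi_estimates} associated to $\phi$. Integrating this pointwise inequality against $|f|$ yields $c_\ep\|f\|_{L^1_{\hat\phi_\ep}}\leq \|f\|_{L^1_\phi}\leq C_\ep\|f\|_{L^1_{\hat\phi_\ep}}$, hence the equality of sets $L^1_\phi=L^1_{\hat\phi_\ep}$ with equivalent norms.

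The only nontrivial step is the identification $k_\ep=k$, which is precisely the reason behind the elaborate construction in Proposition~\ref{prop:eta_Aeta}: the perturbation $\eta$ of $B_\infty$ compensates exactly for the truncation of the kernel $\wp\to\wp^\ep$ so as to preserve the tail exponent of the dual eigenfunction. Verifying that \textbf{(H$B$)} survives the perturbation is routine but does require noting that $\Supp B_{\eta,A_\eta}=\Supp B$, which holds thanks to $A_\eta\geq A_0$ and the fact that $B$ is strictly positive on $[A_0,+\infty)$.
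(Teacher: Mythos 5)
Your proof is correct and follows exactly the route the paper has in mind: the paper's proof is the single sentence that the corollary is ``a direct consequence of Theorem~\ref{th:phi_estimates} and Proposition~\ref{prop:eta_Aeta},'' and your argument spells out precisely what that means, namely that Theorem~\ref{th:phi_estimates} applies to the modified coefficients and that the relation $\wp^\ep_k = 1 + \hat\lambda_\ep/(B_\infty+\eta)$ forced by Proposition~\ref{prop:eta_Aeta}, together with strict monotonicity of $r\mapsto\wp^\ep_r$, identifies the tail exponents of $\phi$ and $\hat\phi_\ep$.
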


\begin{proof}
This is a direct consequence of Theorem~\ref{th:phi_estimates} and Proposition~\ref{prop:eta_Aeta}.
\end{proof}

\begin{corollary}\label{co:nonqc_truncated}
For all $\ep\in(0,\ep_0),$ the semigroup $(\hat T_t^\ep)_{t\geq0}$ is not quasi-compact in $L^1_\phi.$
As a consequence
\[\forall t\geq0,\qquad\|\hat T_t^\ep-P\|_{\cL(L^1_\phi)}\geq1.\]
\end{corollary}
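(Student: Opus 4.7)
The plan is to assemble the ingredients from the first half of this section. First, I would verify that the triple $(\tau, B_{\eta,A_\eta}, \wp^\ep)$ satisfies hypotheses \textbf{(H$\tau$-H$B$-H$\wp$)}. This is routine: the modified rate $B_{\eta,A_\eta}$ remains nonnegative, essentially bounded, has connected support, and equals the positive constant $B_\infty+\eta$ on $[A_\eta,+\infty)$; the truncated kernel $\wp^\ep$ is supported in $[\ep,1]$, so all its moments are finite and $\wp_r^\ep\to+\infty$ as $r\to-\infty$, yielding condition~\eqref{as:wpr_lim}. Since $\inf\Supp\wp^\ep\geq\ep>0$, Corollary~\ref{cor:nonqc_z0>0} applies to this triple and ensures that the rescaled semigroup $(\hat T_t^\ep)_{t\geq0}$ is not quasi-compact on its natural space $L^1_{\hat\phi_\ep}$.

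Next I would transfer this lack of quasi-compactness to $L^1_\phi$. By Corollary~\ref{co:uniform_bound_phi_ep}, the weights $\phi$ and $\hat\phi_\ep$ are comparable, so the spaces $L^1_\phi$ and $L^1_{\hat\phi_\ep}$ coincide as sets and their natural norms are equivalent. Proposition~\ref{prop:invariance} then shows that quasi-compactness is preserved under this change of equivalent norm, so $(\hat T_t^\ep)_{t\geq0}$ fails to be quasi-compact on $L^1_\phi$ as well.

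The quantitative consequence is immediate from the negation of the definition of quasi-compactness. Non-quasi-compactness of $(\hat T_t^\ep)_{t\geq0}$ on $L^1_\phi$ means precisely that for every $t\geq0$ and every compact operator $K\in\cL(L^1_\phi)$ one has $\|\hat T_t^\ep-K\|_{\cL(L^1_\phi)}\geq1$. Since the projection $P=\langle\cdot,\phi\rangle G$ has rank one, it is compact; specializing to $K=P$ gives $\|\hat T_t^\ep-P\|_{\cL(L^1_\phi)}\geq1$ for every $t\geq0$. No serious obstacle is expected at this stage: all the analytic work has been carried out in Corollary~\ref{cor:nonqc_z0>0} and Proposition~\ref{prop:eta_Aeta}, and this corollary is essentially a clean packaging of those results through the invariance property of Proposition~\ref{prop:invariance}.
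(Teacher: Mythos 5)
Your proposal is correct and follows essentially the same route as the paper: transfer the non-quasi-compactness from $L^1_{\hat\phi_\ep}$ (given by Corollary~\ref{cor:nonqc_z0>0}, since $z_0^\ep\geq\ep>0$) to $L^1_\phi$ via the norm equivalence of Corollary~\ref{co:uniform_bound_phi_ep} and the invariance of Proposition~\ref{prop:invariance}, then read the quantitative bound off the definition because $P$ is rank one. You are merely more explicit than the paper about verifying that $(\tau,B_{\eta,A_\eta},\wp^\ep)$ still satisfies \textbf{(H$\tau$-H$B$-H$\wp$)} and about spelling out the negation of the quasi-compactness definition, both of which the paper leaves implicit.
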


\begin{proof}
Corollary~\ref{co:uniform_bound_phi_ep} and Proposition~\ref{prop:invariance} ensure that if a semigroup is not quasi-compact in $L^1_{\hat\phi_\ep}$ then it is not quasi-compact in $L^1_\phi$ either.
But we know from Corollary~\ref{cor:nonqc_z0>0} that $(\hat T_t^\ep)_{t\geq0}$ is not quasi-compact in $L^1_{\hat\phi_\ep}$ since $z_0^\ep>0.$
\end{proof}

\subsection{The general case}

To get Theorem~\ref{th:main} in the case $z_0=0$ it suffices to prove the convergence in the following proposition and to use Corollary~\ref{co:nonqc_truncated}.

\begin{proposition}\label{prop:conv_Tt}
For any time $t\geq0$ we have the convergence
\[\| \hat T_t^\ep-T_t\|_{\cL(L^1_\phi)}\to0\qquad\text{when}\ \ep\to0.\]
\end{proposition}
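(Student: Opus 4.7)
The plan is to use the variation-of-constants (Duhamel) formula to compare the two semigroups and then drive each piece of the generator difference to zero. First, I would observe that the two generators share a common domain: since the norms on $L^1_\phi$ and $L^1_{\hat\phi_\ep}$ are equivalent by Corollary~\ref{co:uniform_bound_phi_ep}, the space $D(\cA)=D(\hat\cA_\ep)=\{g\in L^1_\phi:(\tau g)'\in L^1_\phi,\ (\tau g)(0)=0\}$ is the same, and the difference
\begin{equation*}
\hat\cA_\ep-\cA=(\lambda-\hat\lambda_\ep)I+(B-B_{\eta,A_\eta})+(\cF_+^\ep-\cF_+)
\end{equation*}
is a bounded operator on $L^1_\phi$ by Lemma~\ref{lm:F+def} and its natural analogue for $\cF_+^\ep$. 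Consequently, for every $f\in L^1_\phi$ and $t\geq 0$,
\begin{equation*}
\hat T_t^\ep f-T_t f=\int_0^t\hat T_{t-s}^\ep(\hat\cA_\ep-\cA)T_s f\,\d s,
\end{equation*}
and since $(T_s)$ is a contraction on $L^1_\phi,$
\begin{equation*}
\|\hat T_t^\ep-T_t\|_{\cL(L^1_\phi)}\leq t\cdot M_\ep\cdot\|\hat\cA_\ep-\cA\|_{\cL(L^1_\phi)},\qquad M_\ep:=\sup_{s\in[0,t]}\|\hat T_s^\ep\|_{\cL(L^1_\phi)}.
\end{equation*}

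Next, I would show that each of the three summands in $\hat\cA_\ep-\cA$ has operator norm tending to zero. The scalar term vanishes because $\eta\to 0$ by Proposition~\ref{prop:eta_Aeta} together with Lemma~\ref{lm:continuity_lambda} giving $\hat\lambda_\ep\to\lambda.$ The multiplication term has operator norm $|\eta|$, which also vanishes. For the fragmentation piece, I would split $\cF_+^\ep-\cF_+=(\cF_+^\ep-\cG_+^\ep)+(\cG_+^\ep-\cF_+)$, where $\cG_+^\ep$ is built with the kernel $\wp^\ep$ but the original rate $B$. The first bracket is of order $|\eta|$ by a direct estimate in the spirit of Lemma~\ref{lm:F+def}. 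For the second, repeating that computation but against the signed measure $\wp^\ep-\wp$ gives a bound of the form
\begin{equation*}
\|\cG_+^\ep-\cF_+\|_{\cL(L^1_\phi)}\leq C\Bigl(|\wp^\ep-\wp|(0,1)+\int_0^1 z^k\,|\wp^\ep-\wp|(\d z)\Bigr),
\end{equation*}
each piece of which I would control via the decomposition $\wp^\ep-\wp=(c_\ep^{-1}-1)\indic_{[\ep,1]}\wp-\indic_{(0,\ep)}\wp$ with $c_\ep=\int_\ep^1 z\,\wp(\d z)\to 1$. Finiteness of $\wp_k$ (since $k>\underline r$ by Theorem~\ref{th:phi_estimates}) allows one to conclude by dominated convergence that both terms vanish.

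The main obstacle is obtaining a uniform bound on $M_\ep$ as $\ep\to 0$. Each $\hat T_s^\ep$ is a contraction on its own space $L^1_{\hat\phi_\ep}$, so it suffices to prove that the equivalence constants of Corollary~\ref{co:uniform_bound_phi_ep} can be chosen uniformly, namely $c_*\hat\phi_\ep\leq\phi\leq C_*\hat\phi_\ep$ for all small $\ep$ with $c_*,C_*$ independent of $\ep$. This requires revisiting the proof of Theorem~\ref{th:phi_estimates} and checking that the super/sub-solutions $v$ and $v_L$ constructed there, together with the constants controlling them, depend continuously on $\lambda,\ B_\infty,\ A_0$ and a few moments of the kernel. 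All these data vary continuously with $\ep$ and have finite limits as $\ep\to 0$ (by Proposition~\ref{prop:eta_Aeta} and Lemma~\ref{lm:continuity_lambda}), since the exponent $k$ appearing in the estimates is preserved exactly by the construction of $\eta$ and $A_\eta$. Combining the uniform bound $M_\ep\leq C_*/c_*$ with the three operator-norm estimates above, we obtain $\|\hat T_t^\ep-T_t\|_{\cL(L^1_\phi)}\to 0$, as required.
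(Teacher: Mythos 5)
Your overall framework is right: Duhamel plus a vanishing generator difference is exactly the paper's strategy, and your decomposition of $\hat\cA_\ep-\cA$ into the eigenvalue shift, the rate modification, and the kernel modification is a correct and serviceable way to establish $\|\hat\cA_\ep-\cA\|_{\cL(L^1_\phi)}\to 0$ (the paper packages the last two terms inside $\hat\cF_\ep-\cF$ and estimates them together in Lemma~\ref{lm:conv_F+}, but the content is the same). However, your argument has a genuine gap exactly where you flag the ``main obstacle'': the uniform bound $M_\ep:=\sup_{s\in[0,t]}\|\hat T^\ep_s\|_{\cL(L^1_\phi)}\leq C_*/c_*$. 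You reduce this to showing that the equivalence constants in Corollary~\ref{co:uniform_bound_phi_ep} can be taken independent of $\ep$, and you propose to do this by ``revisiting the proof of Theorem~\ref{th:phi_estimates}.'' That is a plan, not a proof: the constants in that theorem come out of a two-sided maximum-principle argument on a truncated domain $[0,L]$ with a limit $L\to+\infty$, and they depend on the choice of sub/super-solutions, on where along the real line the dominant term in $\cS_L^\pm v$ takes over, and on the compactness argument identifying the limit. Tracking all of that uniformly in the auxiliary parameters $(\ep,\eta,A_\eta)$ is a nontrivial additional piece of analysis that you do not carry out, and the statement that the ratio $C_\ep/c_\ep$ stays bounded is not self-evident.

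The paper avoids this difficulty entirely. After the Duhamel bound
\[
\|\hat T^\ep_t-T_t\|\;\leq\;\|\hat\cA_\ep-\cA\|\int_0^t\|T_s\|\,\|\hat T^\ep_{t-s}\|\,\d s,
\]
instead of seeking a uniform bound on $\|\hat T^\ep_{t-s}\|_{\cL(L^1_\phi)}$, one writes $\|\hat T^\ep_{t-s}\|\leq\|\hat T^\ep_{t-s}-T_{t-s}\|+\|T_{t-s}\|\leq\|\hat T^\ep_{t-s}-T_{t-s}\|+1$ and applies Gr\"onwall's lemma, obtaining
\[
\|\hat T^\ep_t-T_t\|_{\cL(L^1_\phi)}\leq\exp\bigl(t\|\hat\cA_\ep-\cA\|_{\cL(L^1_\phi)}\bigr)-1,
\]
which tends to $0$ as soon as $\|\hat\cA_\ep-\cA\|\to0$. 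This is a strictly simpler route: it makes the quantity $M_\ep$ irrelevant, so no uniformity of the norm-equivalence constants (and hence no quantitative re-examination of Theorem~\ref{th:phi_estimates}) is needed. I recommend you adopt this Gr\"onwall closure; your estimate of $\|\hat\cA_\ep-\cA\|$ is fine as it stands and plugs directly into it.
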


The generator of $(\hat T_t^\ep)_{t\geq0}$ is given by
\[\hat\cA_\ep g=-(\tau g)'-\hat\lambda_\ep g+\hat\cF_\ep g\]
where $\hat\cF_\ep$ is defined on $L^1_\phi$ by
\[\hat\cF_\ep\,g(x)=\int_0^1B_{\eta,A_\eta}\Bigl(\frac xz\Bigr)g\Bigl(\frac xz\Bigr)\frac{\wp^\ep(\d z)}{z}-B_{\eta,A_\eta}(x)g(x).\]
Before proving Proposition~\ref{prop:conv_Tt} we give two lemmas which ensure the convergence of $\hat\cA_\ep$ to $\cA$ in a sense we will precise later.

\begin{lemma}\label{lm:conv_lb}
When $\ep\to0$ we have $\hat\lambda_\ep\to\lambda.$
\end{lemma}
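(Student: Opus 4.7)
The lemma is essentially a corollary of the construction carried out in Proposition~\ref{prop:eta_Aeta}: almost all the work has already been done, and what remains is just to assemble the pieces. My approach is to combine the identification $\hat\lambda_\ep = \tilde\lambda_{\ep,\eta}$ that is built into that construction with the continuity statement of Lemma~\ref{lm:continuity_lambda}.

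Concretely, recall from the proof of Proposition~\ref{prop:eta_Aeta} that $A_\eta$ was defined so that $\lambda_{\ep,\eta,A_\eta} = \tilde\lambda_{\ep,\eta}$, where
$$\tilde\lambda_{\ep,\eta} := \lambda_\ep + \frac{\ep}{1+|\eta|}\bigl(\lambda_{\ep,\eta,A_0}-\lambda_\ep\bigr).$$
Thus by definition of $\hat\lambda_\ep$ one has $\hat\lambda_\ep = \tilde\lambda_{\ep,\eta}$. Using Lemma~\ref{lm:bound_lambda} to bound both $\lambda_\ep = \lambda_{\ep,0,A_0}$ and $\lambda_{\ep,\eta,A_0}$ by $\|B\|_\infty + |\eta|$, I obtain the elementary inequality
$$|\hat\lambda_\ep - \lambda_\ep| \leq \ep\,\max(\|B\|_\infty,1),$$
which is uniform in $\eta$ and was already observed in the last paragraph of the proof of Proposition~\ref{prop:eta_Aeta}.

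It therefore suffices to show that $\lambda_\ep \to \lambda$ as $\ep \to 0$. Since the original fragmentation rate $B$ coincides with $B_{0,A}$ for every $A$, one has $\lambda_\ep = \lambda_{\ep,0,A_0}$ for all $\ep \in [0,\ep_0]$, with the convention $\lambda_0 = \lambda$ when $\ep=0$. The continuity of $(\ep,\eta,A)\mapsto\lambda_{\ep,\eta,A}$ provided by Lemma~\ref{lm:continuity_lambda} then yields $\lambda_{\ep,0,A_0}\to\lambda_{0,0,A_0}=\lambda$ as $\ep\to 0$. Combined with the bound above this gives $\hat\lambda_\ep\to\lambda$, concluding the proof. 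The only mild subtlety I see is checking that Lemma~\ref{lm:continuity_lambda} applies at the boundary point $\ep=0$, but that lemma is precisely stated on the closed interval $[0,\ep_0]$ and so no extra argument is needed.
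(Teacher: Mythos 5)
Your proof is correct and follows essentially the same route as the paper's: the paper's proof simply refers back to the last paragraph of the proof of Proposition~\ref{prop:eta_Aeta}, which establishes the two-sided bound $-\ep\|B\|_\infty\leq\tilde\lambda_{\ep,\eta}-\lambda_\ep\leq\ep\max(\|B\|_\infty,1)$ and then the convergence $\tilde\lambda_{\ep,\eta}\to\lambda$, together with the observation $\hat\lambda_\ep=\tilde\lambda_{\ep,\eta}$. Your write-up is just a more explicit unpacking of that argument, in particular stating openly the implicit step $\lambda_\ep\to\lambda$ via Lemma~\ref{lm:continuity_lambda} at the boundary point $\ep=0$; that is the same continuity fact the paper relies on.
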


\begin{proof}
At the end of the proof of Proposition~\ref{prop:eta_Aeta} we have proved that $\tilde\lambda_{\ep,\eta}\to\lambda.$
But by definition of $\eta$ and $A_\eta,$ $\tilde\lambda_{\ep,\eta}=\hat\lambda_\ep.$
\end{proof}

\begin{lemma}\label{lm:conv_F+}
When $\ep\to0$ we have
\[\|\hat\cF_\ep-\cF\|_{\cL(L^1_\phi)}\to0.\]
\end{lemma}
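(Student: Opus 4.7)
The plan is to introduce the intermediate operator
\[\cF^\ep g(x) := \int_0^1 B(x/z)\,g(x/z)\,\frac{\wp^\ep(\d z)}{z} - B(x)\,g(x)\]
and split the difference as $\hat\cF_\ep - \cF = (\hat\cF_\ep - \cF^\ep) + (\cF^\ep - \cF)$. The first summand measures only the change of the total fragmentation rate from $B$ to $B_{\eta,A_\eta}$, while the second measures only the kernel truncation $\wp \to \wp^\ep$. Each will be controlled separately: the first as an $O(|\eta|)$ perturbation, vanishing by Proposition~\ref{prop:eta_Aeta}, the second by a dominated convergence argument using that $\wp^\ep \to \wp$ in a suitable weighted sense.

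For the first summand, $B_{\eta,A_\eta} - B = \eta\,\indic_{[A_\eta,+\infty)}$ gives
\[(\hat\cF_\ep - \cF^\ep)\,g(x) = \eta\,\Bigl[\int_0^1 \indic_{[A_\eta,+\infty)}(x/z)\,g(x/z)\,\frac{\wp^\ep(\d z)}{z} - \indic_{[A_\eta,+\infty)}(x)\,g(x)\Bigr].\]
Repeating the calculation of Lemma~\ref{lm:F+def} with $\indic_{[A_\eta,+\infty)}$ in place of $B$ and the kernel $\wp^\ep$ in place of $\wp$, and using that the moments $\wp_0^\ep$ and $\wp_k^\ep$ remain uniformly bounded in $\ep$ (because $c_\ep := \int_\ep^1 z\,\wp(\d z) \to 1$), I obtain $\|\hat\cF_\ep - \cF^\ep\|_{\cL(L^1_\phi)} \leq C_1\,|\eta|$. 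Proposition~\ref{prop:eta_Aeta} then forces this piece to zero.

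For the second summand the loss terms cancel, leaving $(\cF^\ep - \cF)\,g(x) = \int_0^1 B(x/z)\,g(x/z)\,\frac{(\wp^\ep - \wp)(\d z)}{z}$. Fubini and the substitution $y = x/z$ yield
\[\|(\cF^\ep - \cF)\,g\|_{L^1_\phi} \leq \|B\|_\infty \int_0^1 |\wp^\ep - \wp|(\d z) \int_0^\infty |g(y)|\,\phi(zy)\,\d y,\]
and Theorem~\ref{th:phi_estimates} provides $\phi(zy) \leq C_2 (1 + z^k)\,\phi(y)$ uniformly in $z \in (0,1]$ and $y > 0$ (via a short case distinction on the sign of $k$). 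Hence $\|\cF^\ep - \cF\|_{\cL(L^1_\phi)} \leq C_2 \|B\|_\infty \int_0^1 (1 + z^k)\,|\wp^\ep - \wp|(\d z)$. Writing explicitly
\[|\wp^\ep - \wp|(\d z) = \bigl|\tfrac{1}{c_\ep} - 1\bigr|\,\indic_{[\ep,1]}\,\wp(\d z) + \indic_{[0,\ep)}\,\wp(\d z),\]
and using $\wp_0 < \infty$ together with $\wp_k < \infty$ (built into the very definition of $k$), dominated convergence makes the integral vanish as $\ep \to 0$. The main technical point is precisely the uniform pointwise comparison $\phi(zy) \leq C_2(1 + z^k)\phi(y)$: it has to be both integrable against $|\wp^\ep - \wp|$ and independent of $\ep$, which is exactly what the two-sided bound of Theorem~\ref{th:phi_estimates} and the finiteness of $\wp_k$ give us.
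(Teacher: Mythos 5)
Your proof is correct and follows essentially the same route as the paper's. The paper expands $\hat\cF_\ep - \cF$ directly into four elementary terms and bounds each one by the technique of Lemma~\ref{lm:F+def}, while you reach the same pieces by first inserting the intermediate operator $\cF^\ep$ (truncated kernel, untouched $B$), which cleanly isolates the $O(|\eta|)$ contribution from the kernel-truncation contribution; in addition you are slightly more careful than the paper in tracking both $\wp_0<\infty$ and $\wp_k<\infty$ through the comparison $\phi(zy)\leq C^2(1+z^k)\phi(y)$, whereas the paper's displayed bound uses $\wp_0$ alone (which is only tight for $k\geq0$).
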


\begin{proof}
Denoting $\rho_\ep=\int_\ep^1 z\wp(dz)$ we have
\begin{align*}
(\hat\cF_\ep&-\mathcal F)g(x)=\rho_\ep^{-1}\int_\ep^1\bigl(B+\eta\indic_{[A_\eta,+\infty)}\bigr)\Bigl(\frac xz\Bigr)g\Bigl(\frac xz\Bigr)\frac{\wp(\d z)}{z}-\eta\indic_{[A_\eta,+\infty)}(x)g(x)\\
&\hspace{75mm}-\int_0^1B\Bigl(\frac xz\Bigr)g\Bigl(\frac xz\Bigr)\frac{\wp(\d z)}{z}\\
&=(\rho_\ep^{-1}-1)\int_\ep^1\bigl(B+\eta\indic_{[A_\eta,+\infty)}\bigr)\Bigl(\frac xz\Bigr)g\Bigl(\frac xz\Bigr)\frac{\wp(\d z)}{z}-\int_0^\ep B\Bigl(\frac xz\Bigr)g\Bigl(\frac xz\Bigr)\frac{\wp(\d z)}{z}\\
&\hspace{38mm}+\eta\int_\ep^{\min(1,x/A_\eta)}g\Bigl(\frac xz\Bigr)\frac{\wp(\d z)}{z}-\eta\indic_{[A_\eta,+\infty)}(x)g(x).
\end{align*}
Mimicking the proof of Lemma~\ref{lm:F+def} we easily get
\begin{align*}
\|(\hat\cF_\ep&-\mathcal F)g\|\leq(\rho_\ep^{-1}-1)\biggl\|\int_0^1(B+\eta)\Bigl(\frac xz\Bigr)g\Bigl(\frac xz\Bigr)\frac{\wp(\d z)}{z}\biggr\|
+\biggl\|\int_0^\ep B\Bigl(\frac xz\Bigr)g\Bigl(\frac xz\Bigr)\frac{\wp(\d z)}{z}\biggr\|\\
&\hspace{74mm}+\eta\biggl\|\int_0^1g\Bigl(\frac xz\Bigr)\frac{\wp(\d z)}{z}\biggr\|+\eta\|g\|\\
&\leq(\rho_\ep^{-1}-1)C^2(\|B\|_\infty +\eta)\wp_0\|g\|+C^2\|B\|_\infty \biggl(\int_0^\ep\wp(\d z)\biggr)\|g\|+\eta C^2\wp_0\|g\|+\eta\|g\|
\end{align*}
which gives
\[\|\hat\cF_\ep-\mathcal F\|_{\cL(L^1_\phi)}\leq(\rho_\ep^{-1}-1)C^2(\|B\|_\infty +\eta)\wp_0+C^2\|B\|_\infty \biggl(\int_0^\ep\wp(\d z)\biggr)+\eta\Bigl(C^2\wp_0+1\Bigr).\]
Recall from Proposition~\ref{prop:eta_Aeta} that $\eta\to0$ when $\ep\to0.$
Additionally we have by monotone convergence $\rho_\ep\to1$ and $\int_0^\ep\wp(\d z)\to0.$
We conclude that $\hat\cF_\ep\to\mathcal F$ in $\cL(L^1_\phi).$
\end{proof}

We deduce the convergence of $\hat T_t^\ep$ to $T_t$ by using the Duhamel formula.

\begin{proof}[Proof of Proposition~\ref{prop:conv_Tt}]
For $g\in L^1_\phi$ we define $h_\ep=(\hat T_t^\ep-T_t)g.$
Clearly $h_\ep$ is the solution to
\[\partial_th_\ep=\cA h_\ep+(\hat\cA_\ep-\cA)\hat T_t^\ep g\]
with initial data $h_\ep(0)=0.$
Notice that $\hat\cA_\ep-\cA=-\hat\lambda_\ep+\lambda+\hat\cF_\ep-\cF$ is a bounded operator on $L^1_\phi.$
The Duhamel formula allows to write
\[h_\ep(t)=\int_0^tT_s(\hat\cA_\ep-\cA)\hat T_{t-s}^\ep g\,\d s\]
and we deduce that
\begin{align*}
\|\hat T_t^\ep-T_t\|_{\cL(L^1_\phi)}&\leq \|\hat\cA_\ep-\cA\|_{\cL(L^1_\phi)}\int_0^t\| T_s\|_{\cL(L^1_\phi)}\|\hat T_{t-s}^\ep\|_{\cL(L^1_\phi)}\,\d s\\
&\leq\|\hat\cA_\ep-\cA\|\int_0^t\| T_s\|\bigl(\|\hat T_{t-s}^\ep-T_{t-s}\|+\|T_{t-s}\|\bigr)\,\d s.
\end{align*}
Since $(T_t)$ is a contraction semigroup in $L^1_\phi$ we have $\| T_t\|_{\cL(L^1_\phi)}\leq1$ and
\[\|\hat T_t^\ep-T_t\|_{\cL(L^1_\phi)}\leq\|\hat\cA_\ep-\cA\|_{\cL(L^1_\phi)}\int_0^t\bigl(\|\hat T_{t-s}^\ep-T_{t-s}\|_{\cL(L^1_\phi)}+1\bigr)\,\d s.\]
By Gr\"onwall's lemma we get
\[\|\hat T_t^\ep-T_t\|_{\cL(L^1_\phi)}\leq \exp\bigl(t\|\hat\cA_\ep-\cA\|_{\cL(L^1_\phi)}\bigr)-1.\]
The conclusion follows from Lemma~\ref{lm:conv_lb} and Lemma~\ref{lm:conv_F+} since
\[\|\hat\cA_\ep-\cA\|_{\cL(L^1_\phi)}\leq|\hat\lambda_\ep-\lambda|+\|\hat\cF_\ep-\cF\|_{\cL(L^1_\phi)}.\]
\end{proof}

%
%
\section{Complementary results}\label{sec:complementary}
\subsection{Bound on the spectral gap in smaller spaces}
Here we consider fragmentation kernels such that $z_0>0$ and we quantify the maximum size of the spectral gap we can hope in smaller weighted $L^1$ spaces.
More precisely we consider spaces $L^1_\psi$ with weights $\psi$ which satisfy
\begin{equation}\label{as:psi}\exists c,C,r>0,\qquad\forall x\geq0,\qquad c\phi(x)\leq\psi(x)\leq C(1+x)^r\phi(x).\end{equation}
Define the {\it decay rate}
\[\omega_\psi:=\sup\bigl\{w\in\bR\,|\, \exists M\geq1,\, \forall t\geq0,\, \|T_t-P\|_{\cL(L^1_\psi)}\leq M\e^{-wt}\bigr\}.\]
Because of the conservativeness of the semigroup $(T_t)_{t\geq0}$ in $L^1_\phi$ we have that $\omega_\phi\geq0,$
and our main result Theorem~\ref{th:main} ensures that $\omega_\phi=0.$
Our proof strongly uses the fact that we work in the space $L^1_\phi$ and we cannot exclude the existence of a spectral gap for stronger weights $\psi.$
Actually it is proved in~\cite{MS} that there exists a spectral gap for strong enough weights, even for bounded fragmentation rates $B.$
But our method can be used to bound this spectral gap.

\begin{theorem}\label{th:bound_spectralgap}
Assume that {\bf(H$B$-H$\tau$-H$\wp$)} are satisfied and that $\wp$ is such that $z_0>0.$
For a weight $\psi$ which satisfies~\eqref{as:psi} we have
\[\omega_\psi\leq-\,\e\log z_0\|\cF_+\|_{\cL(L^1_\phi)}\,r.\]
\end{theorem}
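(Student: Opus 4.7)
The plan is to adapt the construction of Theorem~\ref{th:norm=2} by letting the parameter $a$ in $g_a(x):=\frac{1}{\phi(x)}\indic_{[a,a+1]}(x)$ depend on $t$, and to balance the blow-up of $\|g_a\|_{L^1_\psi}$ against the decay of the Dyson--Phillips tail $R_t(a)$.

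First, I would translate the hypothesis~\eqref{as:psi} into two-sided estimates for $g_a$. Since $\psi\geq c\phi$, the computation of the proof of Theorem~\ref{th:norm=2} gives, for any $\epsilon>0$ and any $a,X$ with $\int_X^{+\infty}G\phi\,\d x\leq\epsilon$ and $R_t(a)\leq\epsilon$,
\[\|T_tg_a-Pg_a\|_{L^1_\psi}\geq c\,\|T_tg_a-Pg_a\|_{L^1_\phi}\geq c\,(2-4\epsilon).\]
Dually, $\psi\leq C(1+x^r)\phi$ yields the upper bound
\[\|g_a\|_{L^1_\psi}=\int_a^{a+1}\frac{\psi(x)}{\phi(x)}\,\d x\leq C'\bigl(1+a^r\bigr).\]

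The heart of the argument is a quantitative tail estimate for the Dyson--Phillips expansion. For any $\alpha>\e$ I would set $N_t:=\lceil\alpha\|\cF_+\|t\rceil$ and $a_t:=X z_0^{-N_t}$, so that Lemma~\ref{lm:supp_St} and Lemma~\ref{lm:Dyson-Philipps} give
\[\int_0^X T_tg_{a_t}(x)\,\phi(x)\,\d x\leq R_t(a_t)=\sum_{n\geq N_t}\frac{(\|\cF_+\|t)^n}{n!}.\]
Since $\|\cF_+\|t/N_t\leq 1/\alpha<1/\e$, a Stirling estimate bounds this geometric-like tail by $C_\alpha(\e/\alpha)^{N_t}/\sqrt{N_t}$, which tends to $0$ as $t\to+\infty$. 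At the same time,
\[1+a_t^r\leq C''\,\e^{-\alpha r\|\cF_+\|t\log z_0}.\]

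I would conclude by contradiction. Suppose $\omega_\psi>-\e r\log z_0\|\cF_+\|_{\cL(L^1_\phi)}$; pick $\alpha>\e$ and $\omega\in(-\alpha r\|\cF_+\|\log z_0,\,\omega_\psi)$, so that for some $M\geq1$ one has $\|T_t-P\|_{\cL(L^1_\psi)}\leq M\e^{-\omega t}$. Applied to $g_{a_t}$ for $t$ large enough that $R_t(a_t)\leq\epsilon$, this gives
\[c(2-4\epsilon)\leq M\e^{-\omega t}\cdot C'(1+a_t^r)\leq M''\,\e^{(-\omega-\alpha r\|\cF_+\|\log z_0)\,t}.\]
The exponent is strictly negative, so the right-hand side vanishes as $t\to+\infty$, contradicting the fixed positive lower bound. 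Hence $\omega_\psi\leq-\alpha r\|\cF_+\|\log z_0$ for every $\alpha>\e$, and letting $\alpha\downarrow\e$ proves the theorem.

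The main obstacle is the sharp calibration of the Dyson--Phillips tail. The constant $\e$ in the statement is forced by the Stirling asymptotics $(\e x/N)^N$: one must take $N_t/(\|\cF_+\|t)$ arbitrarily close to $\e$ from above so that $R_t(a_t)\to 0$ while keeping $a_t^r$, and hence $\|g_{a_t}\|_{L^1_\psi}$, as small as possible. Any coarser choice of $N_t$ would produce a strictly larger constant than $\e$ in the final inequality.
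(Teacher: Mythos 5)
Your argument is correct and follows essentially the same route as the paper: you pick $g_a$ with $a$ growing exponentially in $t$ at the precise rate dictated by the Stirling estimate of the Dyson--Phillips tail, so that $R_t(a)\to0$ while $\|g_a\|_{L^1_\psi}$ grows like $\e^{wt}$ with $w$ arbitrarily close to $-\,\e\log z_0\|\cF_+\|r$. The only differences from the paper are cosmetic: the paper parametrizes by the target exponent $w$ (taking $a=\e^{wt/r}$ and $X=1$), proves the lower bound directly in $L^1_\psi$ via $\int_0^1 G\psi-2CR_t(a)$, and reads off the conclusion from the definition of $\omega_\psi$, whereas you parametrize by the overshoot factor $\alpha>\e$ on the Dyson--Phillips cutoff, transfer the lower bound from $L^1_\phi$ using $\psi\geq c\phi$, and finish by contradiction. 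These reparametrizations are equivalent (with $w\approx-\alpha r\|\cF_+\|\log z_0$), and your identification of the Stirling factor $\e$ as the sharp constant matches the paper's reasoning exactly.
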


\begin{proof}
Similarly as in the proof of Theorem~\ref{th:norm=2} (in which we take $X=1$) we write for $g_a=\frac{1}{\phi(x)}\indic_{a\leq x\leq a+1},$
\[\|T_tg_a-Pg_a\|_{L^1_\psi}\geq\int_0^1G(x)\psi(x)\,\d x-\int_0^1T_tg_a(x)\psi(x)\,\d x\]
and we have
\[\int_0^1T_tg_a(,x)\psi(x)\,\d x\leq2C\int_0^1T_tg_a(x)\phi(x)\,\d x\leq2CR_t(a).\]
Considering $a=\e^{\frac{w}{r} t}$ for some $w\in\bR$ and letting $t$ tend to $+\infty$ we get by using the Stirling formula
\begin{align*}
R_t(a)&=\sum_{n\geq\frac{w t}{-r\log z_0}}\frac{\|\cF_+\|_{\cL(L^1_\phi)}^n t^n}{n!}\\
&=\underset{t\to+\infty}{O}\bigg(\sum_{n\geq\frac{w t}{-r\log z_0}}\frac{(\|\cF_+\| t\e)^n}{n^n}\frac{1}{\sqrt{2\pi n}}\bigg)\\
&=\underset{t\to+\infty}{O}\bigg(\sum_{n\geq\frac{w t}{-r\log z_0}}\Bigl(\frac{-\e\log z_0\|\cF_+\|r}{w}\Bigr)^n\bigg)
\end{align*}
and the last term is the reminder of a convergent series provided that $w>-\,\e\log z_0\|\cF_+\|r.$
We deduce that there exists a constant $\delta>0,$ independent of $t,$ such that
\[\|T_tg_a-Pg_a\|_{L^1_\psi}\geq\delta\]
when $w>-\,\e\log z_0\|\cF_+\|r.$
To conclude we estimate
\[\|g_a\|_{L^1_\psi}=\int_a^{a+1}\frac{\psi(x)}{\phi(x)}\,\d x\leq C\int_a^{a+1}(1+x)^r\,\d x=\underset{a\to+\infty}{O}\big(a^r\big)=\underset{t\to+\infty}{O}\big(\e^{w t}\big).\]
\end{proof}

\noindent{\bf Example.}
In the case of the mitosis kernel $\wp=2\delta_{1/2}$ with constant fragmentation rate $B(x)\equiv B,$ we have $\wp_0=2,$ $z_0=\frac12,$ $\lambda=B$ and $\phi(x)\equiv1.$
From Lemma~\ref{lm:F+def} we get $\|\cF_+\|_{\cL(L^1_\phi)}\leq2B.$
Consider the weight $\psi(x)=(1+x)^r.$\\
From Theorem~\ref{th:bound_spectralgap} we deduce that $\omega_\psi\leq2B\e(\log2)r.$
We also know from~\cite{LP09} that $-B$ is an eigenvalue of the growth-fragmentation operator so that $\omega_\psi\leq2B.$\\
On the other hand it is proved in~\cite[proof of Proposition~6.5]{MS} that there exists a spectral gap for $r>\log_23$ and more precisely that $\omega_\psi\geq\max{(0,(2-3*2^{1-r})B)}.$\\
Finally we obtain the estimates
\[2B\max(0,1-3*2^{-r})\leq\omega_\psi\leq2B\min(\e(\log2)r,1).\]

\subsection{The homogeneous fragmentation kernel}
In the case of the homogeneous fragmentation kernel $\wp\equiv2$ we can say more about the asymptotic behavior of $\phi(x)$ at infinity,
under a stronger assumption on $\tau$ than~\eqref{as:tau_infty} but a weaker assumption on $B$ than~\eqref{as:B_infty}.

\begin{theorem}
Assume that $\wp\equiv2$ and that when $x\to+\infty,$ $B(x)\sim B_\infty x^\gamma$ and $\tau(x)\sim\tau_\infty x^\alpha$ with $\alpha<\gamma+1.$
Then there exists $\phi_\infty>0$ such that
\[\phi(x)\sim\phi_\infty x^k\qquad\text{when}\quad x\to+\infty\]
with
\[\begin{array}{ll}
k=1&\text{if}\quad \gamma>0,
\vspace{2mm}\\
\dis k=\frac{B_\infty-\lambda}{B_\infty+\lambda}\in(-1,1)\quad&\text{if}\quad \gamma=0,
\vspace{2mm}\\
k=\gamma-1&\text{if}\quad \gamma<0.
\end{array}\]
\end{theorem}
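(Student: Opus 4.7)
Since $\wp\equiv 2$, the change of variable $y=zx$ turns the dual eigenequation~\eqref{eq:dual_Perron} into
\begin{equation}\label{eq:starhom}
\tau(x)\phi'(x) = (\lambda+B(x))\phi(x) - \frac{2B(x)}{x}\Phi(x),\qquad \Phi(x):=\int_0^x \phi(y)\,\d y.
\end{equation}
The plan is: \emph{(i)} identify $k$ by dominant balance on~\eqref{eq:starhom}; \emph{(ii)} establish two-sided polynomial bounds $c_1 x^k\leq\phi(x)\leq c_2 x^k$ for $x$ large via the supersolution/subsolution machinery already used in the proof of Theorem~\ref{th:phi_estimates}; \emph{(iii)} promote these bounds to the genuine asymptotic $\phi(x)\sim\phi_\infty x^k$.

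For stage~\emph{(i)}, plugging $\phi\sim\phi_\infty x^k$ into~\eqref{eq:starhom} gives the orders $\tau\phi'=O(x^{k+\alpha-1})$, $\lambda\phi=O(x^k)$, $B\phi=O(x^{k+\gamma})$ and $\frac{2B}{x}\Phi=O(x^{k+\gamma})$ when $k>-1$, respectively $O(x^{\gamma-1})$ when $k<-1$ (in which case $\Phi(x)\to\Phi_\infty<+\infty$). When $\gamma>0$ the two $B$-terms dominate and the cancellation $1=\frac{2}{k+1}$ forces $k=1$. When $\gamma=0$, all three non-$\tau$ terms have order $x^k$ and the balance $\lambda+B_\infty=\frac{2B_\infty}{k+1}$ yields $k=(B_\infty-\lambda)/(B_\infty+\lambda)\in(-1,1)$. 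When $\gamma<0$, one anticipates $k<-1$ and the balance $\lambda\phi\sim\frac{2B(x)}{x}\Phi_\infty$ forces $k=\gamma-1$, the terms $\tau\phi'$ and $B\phi$ being subdominant precisely because $\alpha<\gamma+1$ and $\gamma<0$.

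For stage~\emph{(ii)}, the truncated dual problems~\eqref{eq:phi+}--\eqref{eq:phi-} and the maximum principle of Lemma~\ref{lm:max_principle} apply without modification (they did not use $B\equiv B_\infty$ at any point). I would test against supersolutions $v(x)=x^k\pm\delta x^{k-\varepsilon}$ and subsolutions $v_L(x)=(x^k\pm\delta'x^{k-\varepsilon})(1-x/L)$ with $\varepsilon>0$ small; by construction of $k$ the leading term in $\cS^\pm_L v$ vanishes, and the sign of the next-order contribution, governed by $\wp_{k-\varepsilon}-\wp_k>0$ (Cases~1--2) or by the surviving $\lambda\phi$ residue (Case~3), is the right one thanks to $\alpha<\gamma+1$, which keeps the $\tau v'$ term subdominant with respect to the $B$-contribution. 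Passing $L\to+\infty$ as in Theorem~\ref{th:truncated} yields constants $c_1,c_2,A>0$ with $c_1 x^k\leq\phi(x)\leq c_2 x^k$ for $x\geq A$.

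Stage~\emph{(iii)} is where the main obstacle lies. Case~3 is the cleanest: $\phi(x)\leq c_2 x^{\gamma-1}$ with $\gamma-1<-1$ gives $\Phi_\infty:=\int_0^{+\infty}\phi<+\infty$, and rewriting~\eqref{eq:starhom} as $\lambda\phi=\tau\phi'+\frac{2B}{x}\Phi-B\phi$ one reads off $x^{1-\gamma}\phi(x)\to 2B_\infty\Phi_\infty/\lambda>0$, since the right-hand side is asymptotic to $2B_\infty\Phi_\infty x^{\gamma-1}$ while $\tau\phi'$ and $B\phi$ are both $o(x^{\gamma-1})$ by $\alpha<\gamma+1$ and $\gamma<0$. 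In Cases~1--2, I would introduce $\psi(x):=x^{-k}\phi(x)$ and the weighted average $\overline\psi(x):=\frac{k+1}{x^{k+1}}\int_0^x y^k\psi(y)\,\d y$, and rewrite~\eqref{eq:starhom} in these variables as an integro-differential equation relating $\tau\psi'$, $\psi$ and $\overline\psi-\psi$ in which the defining equation of $k$ has cancelled the leading static source. Combined with the sandwich $c_1\leq\psi\leq c_2$ from stage~\emph{(ii)}, a Gronwall/Volterra argument then forces $\psi$ to admit a finite positive limit $\phi_\infty$. The delicate ingredient, and the real difficulty of the theorem, is to produce a slow-variation estimate controlling $\psi-\overline\psi$ at infinity strong enough to close the Gronwall step, since it cannot be deduced directly from the two-sided bound of stage~\emph{(ii)}.
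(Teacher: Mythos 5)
Your stage (i) dominant-balance heuristics correctly identify the exponent $k$ in all three regimes, and your stage (ii) is plausible in spirit (the maximum principle Lemma~\ref{lm:max_principle} indeed does not require $B\equiv B_\infty$). The genuine gap is stage (iii), and you flag it yourself: passing from two-sided power-law bounds $c_1x^k\leq\phi(x)\leq c_2x^k$ to an actual equivalence $\phi(x)\sim\phi_\infty x^k$ requires a slow-variation or contraction estimate that you do not produce, and there is no obvious route to it from the sandwich alone. Even in Case~3, your argument is circular as written: reading $\lambda\phi$ off the equation $\lambda\phi=\tau\phi'+\tfrac{2B}{x}\Phi-B\phi$ requires $\tau\phi'=o(x^{\gamma-1})$, but the equation itself only yields $\tau\phi'=O(x^{\gamma-1})$ from the bounds on $\phi$; nothing in your bounds controls $\phi'$ to the next order.

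The paper proceeds quite differently, and the trick is worth internalizing. Test the dual equation against $G$ on $[0,X]$, integrate by parts, and substitute the primal equation for $(\tau G)'$: the integro-differential structure collapses for $\wp\equiv2$, and one arrives at the pointwise relation $\phi(x)=f(x)\,\Phi(x)$ with $\Phi(x)=\int_0^x\phi$ and $f(x)=\frac{2}{\tau(x)G(x)}\int_x^\infty\frac{B(y)}{y}G(y)\,\d y$. The key observation is that $f=(\log(\tau G))'+\frac{\lambda+B}{\tau}$ (again from $G$'s equation), so the first-order linear ODE $\Phi'=f\Phi$ can be integrated in closed form, yielding the explicit formula
\begin{equation*}
\phi(x)=\Bigl(\int_0^1\phi\Bigr)\,\frac{2\,\e^{\Lambda(x)}}{\tau(1)G(1)}\int_x^\infty\frac{B(y)}{y}G(y)\,\d y.
\end{equation*}
The asymptotic equivalence for $\phi$ then reduces to known sharp asymptotics for $G$ (from~\cite{BCG}) and an application of L'H\^opital to the tail integral. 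This buys you the genuine equivalence with no supersolution/subsolution machinery and no Gronwall step, at the price of being special to $\wp\equiv2$ (it exploits that $\cF_+^*\phi(x)=\frac{2B(x)}{x}\int_0^x\phi$ is a simple cumulative integral). Your dominant-balance reading of $k$ is consistent with what comes out of the explicit formula, but to make the theorem a theorem you would need either to find the integrating-factor identity, or to supply the missing slow-variation estimate, which for this equation is the harder path.
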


\begin{remark}
Notice that in the case $\gamma=0$ the result is consistent with Theorem~\ref{th:phi_estimates}.
Indeed $k=\frac{B_\infty-\lambda}{B_\infty+\lambda}$ is the power for which the relation $\wp_k=1+\frac{\lambda}{B_\infty}$ is satisfied since for $\wp\equiv2$ we have $\wp_k=\frac{2}{k+1}.$
\end{remark}

To our knowledge it is the first result which provides an equivalent of $\phi(x)$ when $x\to+\infty.$
In~\cite{BCG} we only have an upper and a lower bound with the same power law but not an equivalent.

\begin{proof}
We start from the equation for $\phi$
\[\tau(x)\phi'(x)=(\lambda+B(x))\phi(x)-2\frac{B(x)}{x}\int_0^x\phi(y)\,\d y\]
and we test this equation against $G$ on $[0,X]$
\[\int_0^X\tau(x)\phi'(x)G(x)\,\d x=\int_0^X(\lambda+B(x))\phi(x)G(x)\,\d x-2\int_0^X\frac{B(x)}{x}G(x)\int_0^x\phi(y)\,\d y\,\d x\]
to obtain, using $\tau(0)\phi(0)G(0)=0$ (see~\cite{DG10}),
\begin{align*}
\tau(X)\phi(X)G(X)&-\int_0^X\phi(x)(\tau G)'(x)\,\d x=\\
&\int_0^X(\lambda+B(x))G(x)\phi(x)\,\d x-2\int_0^X\phi(y)\int_y^X\frac{B(x)}{x}G(x)\,\d x\,\d y.
\end{align*}
Using the equation satisfied by $G$ we get
\[\tau(X)\phi(X)G(X)=2\int_0^X\phi(y)\int_X^\infty\frac{B(x)}{x}G(x)\,\d x\,\d y\]
which can be written as
\[\phi(x)=f(x)\int_0^x\phi(y)\,\d y\]
where we have defined
\[f(x):=\frac{2}{\tau(x)G(x)}\int_x^\infty\frac{B(y)}{y}G(y)\,\d y.\]
Denote by $F$ the function defined by
\[F(x):=\e^{-\int_1^xf(y)\,dy}\int_0^x\phi(y)\,\d y.\]
This function satisfies $F'(x)=0$ so we have for any $x>0,$ $F(x)=F(1)=\int_0^1\phi$ which gives
\[\int_0^x\phi(y)\,\d y=\Bigl(\int_0^1\phi\Bigr)\,\e^{\,\int_1^xf(y)\,\d y}\]
and then
\[\phi(x)=\Bigl(\int_0^1\phi\Bigr)\,f(x)\,\e^{\,\int_1^xf(y)\,\d y}.\]
Using again the equation satisfied by $G$ we can write
\[f(x)=\bigl(\log(\tau G)\bigr)'(x)+\frac{\lambda+B(x)}{\tau(x)}.\]
By integration we obtain
\[\int_1^x f(y)\,\d y=\log(\tau G)(x)-\log(\tau G)(1)+\Lambda(x)\]
and then
\[\e^{\,\int_1^xf(y)\,\d y}=\frac{\tau(x) G(x)}{\tau(1)G(1)}\e^{\Lambda(x)}.\]
Finally we get
\begin{equation}\label{eq:phi_wp=1}
\phi(x)=\Bigl(\int_0^1\phi\Bigr)\,\frac{2\,\e^{\Lambda(x)}}{\tau(1)G(1)}\int_x^\infty\frac{B(y)}{y}G(y)\,\d y.
\end{equation}
We know from~\cite{BCG} that there exists $C>0$ such that
\[G(x)\sim Cx^{\xi-\alpha}\e^{-\Lambda(x)},\]
where
\[\xi=\left\{\begin{array}{ll}
0&\quad\text{if}\ \gamma<0,
\vspace{2mm}\\
\frac{2B_\infty}{\lambda+B_\infty}&\quad\text{if}\ \gamma=0,
\vspace{2mm}\\
2&\quad\text{if}\ \gamma>0.
\end{array}\right.\]
Using the L'H\^opital's rule we get
\[\int_x^\infty\frac{B(y)}{y}G(y)\,\d y\sim \left\{\begin{array}{ll}
C\tau_\infty\frac{B_\infty}{\lambda}\,x^{\xi+\gamma-1}\e^{-\Lambda(x)}&\quad\text{if}\ \gamma<0,
\vspace{2mm}\\
C\tau_\infty\frac{B_\infty}{\lambda+B_\infty}\,x^{\xi-1}\e^{-\Lambda(x)}&\quad\text{if}\ \gamma=0,
\vspace{2mm}\\
C\tau_\infty\,x^{\xi-1}\e^{-\Lambda(x)}&\quad\text{if}\ \gamma>0.
\end{array}\right.\]
Finally by Equation~\eqref{eq:phi_wp=1} we obtain the existence of $\phi_\infty>0$ such that when $x\to+\infty$
\[\phi(x)\sim\phi_\infty x^k,\]
where the power $k$ is given by
\[k=\left\{\begin{array}{ll}
\gamma-1&\quad\text{if}\ \gamma<0,
\vspace{2mm}\\
\frac{B_\infty-\lambda}{B_\infty+\lambda}&\quad\text{if}\ \gamma=0,
\vspace{2mm}\\
1&\quad\text{if}\ \gamma>0.
\end{array}\right.\]

\end{proof}

\begin{corollary}
For the uniform fragmentation kernel $\wp\equiv2,$ our main result is still valid if we replace~\eqref{as:B_infty} by the more general condition
\[\lim_{x\to+\infty}B(x)=B_\infty<+\infty\]
while replacing~\eqref{as:tau_infty} by the stronger condition
\[\exists\, \alpha<1,\, \tau_\infty>0,\quad \lim_{x\to+\infty}x^{-\alpha}\tau(x)=\tau_\infty.\]
\end{corollary}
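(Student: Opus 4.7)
The strategy is to revisit the proof of Theorem~\ref{th:main}, replacing each appeal to Theorem~\ref{th:phi_estimates} by the finer asymptotic $\phi(x)\sim\phi_\infty x^k$ just obtained for the uniform kernel, where $k=(B_\infty-\lambda)/(B_\infty+\lambda)\in(-1,1)$. Combined with the boundedness of $\phi$ at the origin, which follows as in Step~1 of the proof of Theorem~\ref{th:phi_estimates} from $\Lambda(0)<+\infty$ (itself a consequence of~\eqref{as:tau0} and the essential boundedness of $B$), this produces the same two-sided bound $\tfrac1C(1+x^k)\leq\phi(x)\leq C(1+x^k)$. This is the only place in the whole proof of Theorem~\ref{th:main} where condition~\eqref{as:B_infty}, beyond the mere boundedness of $B$, is genuinely used: elsewhere $B$ intervenes only through $\|B\|_\infty<+\infty$.

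With this bound available, Lemma~\ref{lm:F+def} and Proposition~\ref{prop:gen_St} still furnish a positive contraction $C_0$-semigroup $(T_t)$ on $L^1_\phi$. Since $\wp\equiv 2$ forces $z_0=0$, I would then apply the scheme of Section~\ref{sec:z0=0}: truncate $\wp$ into $\wp^\ep$ supported on $[\ep,1]$ (so $z_0^\ep\geq\ep>0$), modify $B$ via Proposition~\ref{prop:eta_Aeta} into $B_{\eta,A_\eta}$ with $\eta\to 0$ as $\ep\to 0$, and apply Corollary~\ref{cor:nonqc_z0>0} to the truncated semigroup $(\hat T_t^\ep)$. Corollary~\ref{co:uniform_bound_phi_ep} together with Proposition~\ref{prop:invariance} then transfer non-quasi-compactness from $L^1_{\hat\phi_\ep}$ to $L^1_\phi$, and Proposition~\ref{prop:conv_Tt} finally transfers it from $(\hat T_t^\ep)$ to $(T_t)$ via operator-norm convergence. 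Since the projection $P$ is compact (rank one), this forces $\|T_t-P\|_{\cL(L^1_\phi)}\geq 1$.

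The main obstacle is to verify that the analog of Theorem~\ref{th:phi_estimates} still holds for the modified-and-truncated problem, whose fragmentation rate $B_{\eta,A_\eta}$ is no longer constant at infinity. The plan is to revisit Steps~2 and~3 of the proof of Theorem~\ref{th:phi_estimates} and replace $B_\infty$ by the asymptotic value $B_\infty+\eta$ of $B_{\eta,A_\eta}$; the strengthened hypothesis $\tau(x)\sim\tau_\infty x^\alpha$ provides an explicit leading order for $\tau(x)v'(x)$ against which the $o(1)$ discrepancy $B_{\eta,A_\eta}(x)-(B_\infty+\eta)$ can be absorbed into the subdominant $x^{k-\epsilon}$ correction terms already present in those super- and subsolution constructions. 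The auxiliary Lemmas~\ref{lm:continuity_lambda} and~\ref{lm:bound_lambda}, resting on the eigenelement estimates of~\cite{DG10}, extend to the relaxed setting by the same perturbative principle, which closes the argument.
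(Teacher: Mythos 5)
Your high-level plan is the one the paper's (unwritten) proof must follow: combine the asymptotic $\phi(x)\sim\phi_\infty x^k$ from the preceding theorem with the Step~1 analysis at the origin to recover the two-sided bound $\frac1C(1+x^k)\leq\phi(x)\leq C(1+x^k)$, observe that $\wp\equiv2$ forces $z_0=0$, and run the truncation/quasi-compactness scheme of Section~\ref{sec:z0=0}. You have also correctly isolated the one genuinely delicate point: the analog of Theorem~\ref{th:phi_estimates} must hold for the \emph{modified and truncated} problem, whose dual eigenfunction $\hat\phi_\ep$ underlies Corollary~\ref{co:uniform_bound_phi_ep} and hence the whole Section~\ref{sec:z0=0} machinery.

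However, the way you propose to fill that gap does not work as stated. Keeping the paper's definition $B_{\eta,A}:=B+\eta\indic_{[A,+\infty)}$, the modified rate is only \emph{asymptotically} constant, and in the super/subsolution computations the error it introduces is $\bigl(B(x)-B_\infty\bigr)(1-\wp^\ep_r)\,x^r$, a term of size $o(1)\cdot x^r$ whose decay is entirely uncontrolled under the hypothesis $B(x)\to B_\infty$. You suggest absorbing it into $\tau(x)v'(x)=O(x^{r-1+\alpha})$ or into the $x^{r-\epsilon}$ correction, but these are of strictly smaller polynomial order; if $B(x)-B_\infty$ decays like $1/\log x$, say, the discrepancy dominates both. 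The only term of the matching order $x^r$ available is $(\lambda_L^\pm-\hat\lambda_\ep)x^r$, whose coefficient shrinks to zero as $L\to\infty$, so the threshold $A$ beyond which the super/subsolution inequality holds would have to grow with $L$. That breaks Step~2 of Theorem~\ref{th:phi_estimates}: the constant $C$ such that $Cv\geq\phi_L^+$ on $[0,A]$ is chosen using uniform convergence of $\phi_L^+$ on a \emph{fixed} compact set, and with a growing $A(L)$ one would need an a priori bound with the correct power $k$, which is exactly what one is trying to prove. The strengthened hypothesis on $\tau$ does not save this; it is used to get the exact asymptotic of $\phi$ via the explicit formula~\eqref{eq:phi_wp=1}, which is special to $\wp\equiv2$ and unavailable for $\wp^\ep$.

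The correct repair is to change the modification of the fragmentation rate in Section~\ref{sec:z0=0}: replace $B_{\eta,A}=B+\eta\indic_{[A,+\infty)}$ by $B_{\eta,A}(x):=B(x)\indic_{x<A}+(B_\infty+\eta)\indic_{x\geq A}$. This modified rate is \emph{genuinely} eventually constant, so Theorem~\ref{th:phi_estimates} applies verbatim to the truncated problem and gives $\hat\phi_\ep\asymp 1+x^k$ with the power fixed by Proposition~\ref{prop:eta_Aeta}, which in turn yields Corollary~\ref{co:uniform_bound_phi_ep}. The price is that $\hat\cF_\ep-\cF$ now involves $B_{\eta,A_\eta}-B=(B_\infty+\eta-B)\indic_{[A_\eta,+\infty)}$ rather than the constant $\eta\indic_{[A_\eta,+\infty)}$; but $\|B_{\eta,A_\eta}-B\|_\infty\leq|\eta|+\sup_{x\geq A_\eta}|B(x)-B_\infty|$, which tends to $0$ provided $\eta\to0$ and $A_\eta\to+\infty$ as $\ep\to0$ (a point that also needs to be checked, and follows from $\tilde\lambda_{\ep,\eta}\to\lambda$ together with~\eqref{eq:lim_lambda_A_infty}). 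With that, the proof of Lemma~\ref{lm:conv_F+}, Proposition~\ref{prop:conv_Tt} and Corollary~\ref{co:nonqc_truncated} go through unchanged. So your outline is the right one, but the claimed ``absorption'' at the key step should be replaced by this cleaner modification of $B$.
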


%
%
\section{Conclusion and perspectives}

We have proved that uniform exponential convergence does not hold in $L^1_\phi$ when the fragmentation rate is bounded.
The weight $\phi(x),$ which is the dual Perron eigenfunction of the growth-fragmentation operator, behaves as a power $k<1$ of $x$ at infinity.
We know from~\cite{MS} that an exponential rate of decay exists in smaller weighted $L^1$ spaces.
More precisely it exists in $L^1_\psi$ for $\psi(x)=(1+x)^r$ with $r>1$ large enough.
A natural question is to know what is the critical exponent for which we have uniform exponential convergence.
For instance does it occur for any $r>k$?

Another natural question concerns the unbounded case.
In the proof of Theorem~\ref{th:main} we crucially use the fact that the fragmentation rate is bounded.
This condition allows to control the number of splittings uniformly with respect to arbitrarily large initial sizes.
What happens when the fragmentation rate tends to infinity at infinity?
Do we have an exponential rate of decay in $L^1_\phi$ or not?

All these question will be addressed in future works.

\section*{Acknowledgements}

P.G. was supported by the ANR project KIBORD, ANR-13-BS01-0004, funded by the French Ministry of Research.

%
%


\end{document}